\providecommand{\U}[1]{\protect\rule{.1in}{.1in}}
\providecommand{\U}[1]{\protect\rule{.1in}{.1in}}
\newtheorem{theorem}{Theorem}[subsection]
\newtheorem{corollary}[theorem]{Corollary}
\newtheorem{example}[theorem]{Example}
\newtheorem{examples}[theorem]{Examples}
\newtheorem{lemma}[theorem]{Lemma}
\newtheorem{proposition}[theorem]{Proposition}
\newenvironment{proof}[1][Proof]{\textbf{#1.} }{\ \rule{0.5em}{0.5em}}
\def \N{\mathbb{N}}
\def \Z{\mathbb{Z}}
\def \R{\mathbb{R}}
\def \C{\mathcal C}
\begin{document}

\title{Random walks in Weyl chambers and crystals}
\date{December 14, 2010}
\author{C\'{e}dric Lecouvey, Emmanuel Lesigne and Marc Peign\'{e}}

\maketitle

\begin{abstract}
We use Kashiwara crystal basis theory to associate a random walk $\mathcal{W}$
to each irreducible representation $V$ of a simple Lie algebra. This is
achieved by endowing the crystal attached to $V$ with a (possibly non uniform)
probability distribution compatible with its weight graduation. We then prove
that the generalized Pitmann transform defined in \cite{BBOC1} for similar
random walks with uniform distributions yields yet a Markov chain. When the
representation is minuscule, and the associated random walk has a drift in the
Weyl chamber, we establish that this Markov chain has the same law as
$\mathcal{W}$ conditionned to never exit the cone of dominant weights. For 
the defining representation $V$ of $\mathfrak{gl_{n}}$, we notably recover the
main result of \cite{OC1}. At the heart of our proof is a quotient version of
a renewal theorem that we state in the context of general random walks in a
lattice. This theorem also have applications in representation theory since it
permits to precise the behavior of some outer multiplicities for large
dominant weights.
\end{abstract}
\vfill
Laboratoire de Math\'ematiques et Physique Th\'eorique (UMR CNRS 6083)\\
Universit\'e Fran\c{c}ois-Rabelais, Tours \\
F\'ed\'eration de Recherche Denis Poisson - CNRS\\
Parc de Grandmont, 37200 Tours, France.
\\

\noindent
{cedric.lecouvey@lmpt.univ-tours.fr}\\
{emmanuel.lesigne@lmpt.univ-tours.fr}\\
{marc.peigne@lmpt.univ-tours.fr}
\newpage
\tableofcontents

\section{Introduction}

The purpose of the article is to study some interactions between
representation theory of simple Lie algebras over $\mathbb{C}$ and certain
random walks defined on lattices in Euclidean spaces.\ We provide both
results on random walks conditionned to never exit a cone and identities
related to asymptotic representation theory.

The well-known ballot walk in $\mathbb{R}^{n}$ appears as a particular
case of the random walks we consider here.\ Let $B=(\varepsilon _{1},\ldots
,\varepsilon _{n})$ be the standard basis of $\mathbb{R}^{n}$.\ The {ballot
walk} can be defined as the Markov chain {$(\mathcal{W}_{\ell }=X_{1}+\cdots
+X_{\ell })_{\ell \geq 1}$ where $(X_{k})_{k\geq 1}$ is a sequence of
independent and identically distributed random variables taking values in
the base $B$.} Our main motivation is to generalize results due to O'Connell 
\cite{OC1},\cite{OC2} given the law of the random walk $\mathcal{W}=(%
\mathcal{W}_{\ell })_{\ell \geq 1}$ conditioned to never exit the Weyl
chamber $\overline{C}=\{x=(x_{1},\ldots ,x_{n})\in \mathbb{R}^{n}\mid
x_{1}\geq \cdots \geq x_{n}\}.\;$This is achieved in \cite{OC1} by
considering first a natural transformation $\mathfrak{P}$ which associates
to any path with steps in $B$ a path in the Weyl chamber $\overline{C}$,
next by checking that the image of the random walk $\mathcal{W}_{\ell }$ by
this transformation is a Markov chain and finally by establishing that this
Markov chain has the same law as $\mathcal{W}$ conditioned to never exit $%
\overline{C}$.\ The transformation $\mathfrak{P}$ is based on the
Robinson-Schensted correspondence which maps the words on the ordered
alphabet $\mathcal{A}_{n}=\{1<\cdots <n\}$ (regarded as finite paths in $%
\mathbb{R}^{n}$) on pairs of semistandard tableaux.\ It can be reinterpreted
in terms of Kashiwara's crystal basis theory \cite{Kashi} (or equivalently
in terms of the Littelmann path model).\ Each path of length $\ell $ with
steps in $B$ is then interpreted as a vertex in the crystal $B(\omega
_{1})^{\otimes \ell }$ (see \S\ref{subsec_crys} for basics on crystals)
corresponding to the $\ell $-tensor power of the defining representation of $%
\mathfrak{sl}_{n}$. The transformation $\mathfrak{P}$ associates to each
vertex $b\in B(\omega _{1})^{\otimes \ell }$ the highest weight vertex of $%
B(b)$, where $B(b)$ denotes the connected component of $B(\omega _{1})^{\otimes \ell }$
containing $b$.

Let $\mathfrak{g}$ be a simple Lie algebra over $\mathbb{C}$ with weight
lattice $P\subset\mathbb{R}^{N}$ and dominant weights $P_{+}$ (see Section 
\ref{Sec_RT} for some background on root systems and representation theory).
Write also $\overline{C}$ for the corresponding Weyl chamber. For any
dominant weight $\delta\in P_{+},$ let $V(\delta)$ be the
(finite-dimensional) irreducible representation of $\mathfrak{g}$ of highest
weight $\delta$ and denote by $B(\delta)$ its Kashiwara crystal graph.\ One
can then consider the transformation $\mathfrak{P}$ which maps any  $b\in
B(\delta)^{\otimes\ell}$ on  the highest weight vertex $\mathfrak{P}(b)$ of 
 the connected component
 $B(b)$ of the graph $B(\delta)^{\otimes\ell}$
containing the vertex $b$. This simply means that $\mathfrak{P}(b)$ is the
source vertex of $B(b)$ considered as an oriented graph.$\;$This
transformation was introduced in \cite{BBOC1}. As described in \cite{BBOC1},
it can be interpreted as a generalization of the Pitman transform for
one-dimensional paths.

In order to define a random walk from the tensor powers $B(\delta)^{\otimes\ell}$, we
need first to endow $B(\delta)$ with a probability distribution $%
(p_{a})_{a\in B(\delta)}$. Contrary to \cite{BBOC1} where the random walks
considered are discrete version of Brownian motions, we will consider non
uniform distributions on $B(\delta)$ and use random walks with drift in $C$%
.\ Once we have defined a probability on $B(\delta),$ it suffices to
consider the product probability on $B(\delta)^{\otimes\ell}$: the
probability associated to the vertex $b=a_{1}\otimes\cdots\otimes
a_{\ell}\in B(\delta)^{\otimes\ell}$ is then $p_{b}=p_{a_{1}}\cdots
p_{a_{\ell}}$. In order to use Kashiwara crystal basis theory, it is also
natural to impose that the distributions we consider on the crystal $%
B(\delta)^{\otimes\ell}$ are compatible with their weight graduation $%
\mathrm{wt}$, that is $p_{b}=p_{b^{\prime}}$ whenever $\mathrm{wt}(b)=%
\mathrm{wt}(b^{\prime}).\;$Since $\mathrm{wt}(b)=\mathrm{wt}(a_{1})+\cdots+%
\mathrm{wt}(a_{\ell})$, the two conditions $p_{b}=p_{a_{1}}\cdots
p_{a_{\ell}}$ and $p_{b}=p_{b^{\prime}}$ whenever $\mathrm{wt}(b)=\mathrm{wt}%
(b^{\prime})$ essentially impose that our initial distribution on $B(\delta)$
should be exponential with respect to the weight graduation. To be more
precise, recall that $B(\delta)$ is an oriented and colored graph with
arrows $a\overset{i}{\rightarrow}a^{\prime}$ labelled by the simple roots $%
\alpha _{i},i=1,\ldots,n$, of $\mathfrak{g}$.\ Moreover, we have then $%
\mathrm{wt}(a^{\prime})=\mathrm{wt}(a)-\alpha_{i}$.\ We thus associate to
each simple root $\alpha_{i}$ a positive real number $t_{i}$ and consider
only probability distributions on $B(\delta)$ such that $a\overset{i}{%
\rightarrow}a^{\prime}$ implies   $p_{a^{\prime}}=p_{a}\times t_{i}$.\
These distributions and their corresponding product on $%
B(\delta)^{\otimes\ell}$ are then compatible with the weight graduation as
required. The uniform distribution corresponds to the case when $%
t_{1}=\cdots=t_{n}=1$.

The tensor power $B(\delta)^{\otimes\mathbb{N}}$ (defined as the projective
limit of the crystals $B(\delta)^{\otimes\ell}$) can then also be endowed
with a probability distribution of product ype.\ We define the random variable $\mathcal{W%
}_{\ell}$ on $B(\delta)^{\otimes\mathbb{N}}$ by $\mathcal{W}_{\ell}(b)=%
\mathrm{wt}(b(\ell))$ where $b(\ell)=\otimes_{k=1}^{\ell}a_{k}\in
B(\delta )^{\otimes\ell}$ for any $b=\otimes_{k=1}^{+\infty}a_{k}\in
B(\delta )^{\otimes\mathbb{N}}$. This yields a random walk $\mathcal{W}=(\mathcal{W%
}_{\ell })_{\ell\geq1}$ on $P$ whose transition matrix $\Pi_{\mathcal{W}}$
can be easily computed (see \ref{transW}).\ Our next step is to introduce the
random process $\mathcal{H=}(\mathcal{H}_{\ell})_{\ell\geq1}$ on $P_{+}$
such that $\mathcal{H}_{\ell}(b)=\mathrm{wt}(\mathfrak{P}(b(\ell)))$ for any 
$b\in B(\delta)^{\otimes\mathbb{N}}$.\ We prove in Theorem \ref{Th_main}
that $\mathcal{H}$ is a Markov chain and compute its transition matrix $\Pi_{%
\mathcal{H}}$ in terms of the Weyl characters of the irreducible
representations $V(\lambda),\lambda\in P_{+}.$

Write $\Pi _{\mathcal{W}}^{\overline{C}}$ for the restriction of $\Pi _{%
\mathcal{W}}$ to the Weyl chamber $\overline{C}$.\ Proposition \ref{Th_psi}
states that $\Pi _{\mathcal{H}}$ can be regarded as a Doob $h$-transform of $%
\Pi _{\mathcal{W}}^{\overline{C}}$ if and only if $\delta $ is a minuscule
weight.\ As in \cite{OC1} and \cite{OC2}, we use a theorem of Doob (Theorem 
\ref{Th_Doob}) to compare the law of $\mathcal{W}$ conditioned to stay in $%
\overline{C}$ with the law of $\mathcal{H}$.\ Nevertheless the proofs in 
\cite{OC1} and \cite{OC2} use, as a key argument, asymptotic behaviors of
the outer multiplicities in $V(\mu )\otimes V(\delta )^{\otimes \ell }$ when 
$\delta $ is the defining representation of $\mathfrak{sl}_{n}$.\ These
limits seem very difficult to obtain for any minuscule representation by
purely algebraic means.\ To overcome this problem, we establish Theorems \ref
{tll-q} and \ref{ren-q} which can be seen as quotient Local Limit Theorem
and quotient Renewal Theorem for large deviations of random walks
conditioned to stay in a cone.\ They hold in the general context of random
walks in a lattice with drift $m$ in the interior $C$ of a fixed closed cone 
$\overline{C}.$ This yields Theorem \ref{Th_coincide} equating the law of $%
\mathcal{W}$ conditioned to never exit $\overline{C}$ with that of $\mathcal{%
H}$ provided $\delta $ is a minuscule representation and $m=\mathbb{E}(%
\mathcal{W}_{1})$ belongs to $C$. 

The quotient Local Limit Theorem has also some
applications in representation theory since it provides the asymptotic
behavior for the outer multiplicities in $V(\mu )\otimes V(\delta )^{\otimes
\ell }$ when $\delta $ is a minuscule representation (Theorem \ref{Th_Asympt}%
).

The paper is organized as follows. Sections \ref{Sec_RT} and \ref{Sec_Markov}
are respectively devoted to basics on representation theory of Lie algebras
and Markov chains.\ In Section \ref{Sec_Renewal}, we state and prove the
probability results (Theorems \ref{tll-q} and \ref{ren-q}). Section \ref
{Sec_crystl_RW} details the construction of the random walk $\mathcal{W}$.\
The transition matrix of the Markov chain $\mathcal{H}$ is computed in
Section \ref{Sec_MarkovCains}. The main results of Section \ref{Sec_restric}
are Theorem \ref{Th_coincide} and Corollary \ref{Cor_StayinC}. This corollary gives an
explicit formula for the probability that $\mathcal{W}$ remains forever in $%
\overline{C}$. Finally in Section \ref{Sec_miscell}, we study random walks
defined from non irreducible representations and prove Theorem \ref
{Th_Asympt}.

\section{Background on representation theory}

\label{Sec_RT} We recall in the following paragraphs some classical
background on representation theory of simple Lie algebras that we shall
need in the sequel.\ For a complete review, the reader is referred to \cite%
{Bour} or \cite{Hal}.

\subsection{Root systems}

Let $\mathfrak{g}$ be a simple Lie algebra over $\mathbb{C}$ and $\mathfrak{g%
}=\mathfrak{g_{+}}\oplus\mathfrak{h}\oplus\mathfrak{g_{-}}$ a triangular
decomposition.\ We shall follow the notation and convention of \cite{Bour}.\
According to the Cartan-Killing classification, $\mathfrak{g}$ is
characterized (up to isomorphism) by its root system. This root system is
realized in an Euclidean space $\mathbb{R}^{N}$ with standard basis $%
B=(\varepsilon _{1},\ldots,\varepsilon_{N}).\;$We denote by $%
\Delta_{+}=\{\alpha_{i}\mid i\in I\}$ the set of simple roots of $\mathfrak{g%
},$ by $R_{+}$ the (finite) set of positive roots and let $n=\mathrm{card}%
(I)\;$for the rank of $\mathfrak{g}$. The root lattice of $\mathfrak{g}$ is
the integral lattice $Q=\bigoplus_{i=1}^{n}\mathbb{Z\alpha}_{i}.$ Write $%
\omega_{i},i=1,\ldots,n$ for the fundamental weights associated to $%
\mathfrak{g}$. The weight lattice associated to $\mathfrak{g}$ is the
integral lattice $P=\bigoplus_{i=1}^{n}\mathbb{Z\omega}_{i}.$ It can be
regarded as an integral sublattice of $\mathfrak{h}_{\mathbb{R}}^{\ast}$. We
have $\dim(P)=\dim(Q)=n$ and $Q\subset P$.

The cone of dominant weights for $\mathfrak{g}$ is obtained by considering
the positive integral linear combinations of the fundamental weights, that
is 
\begin{equation*}
P_{+}=\bigoplus_{i=1}^{n}\mathbb{N\omega}_{i}.
\end{equation*}
The corresponding Weyl chamber is the cone $C=\bigoplus_{i=1}^{n}\mathbb{R}%
_{>0}\mathbb{\omega}_{i}$. We also introduce its closure $\overline{C}%
=\bigoplus_{i=1}^{n}\mathbb{R}_{\geq0}\mathbb{\omega}_{i}$. In type $A$, we
shall use the weight lattice of $\mathfrak{gl}_{n}$ rather than that of $%
\mathfrak{sl}_{n}$ for simplicity.\ We also introduce the Weyl group $W$ of $%
\mathfrak{g}$ which is the group generated by the reflections $s_{i}$
through the hyperplanes perpendicular to the simple root $\alpha
_{i},i=1,\ldots,n$. Each $w\in W$ may be decomposed as a product of the $%
s_{i},i=1,\ldots,n.\;$All the minimal length decompositions of $w$ have the
same length $l(w)$.\ For any weight $\beta$, the orbit $W\cdot\beta$ of $%
\beta$ under the action of $W$ intersects $P_{+}$ in a unique point. We
define a partial order on $P$ by setting $\mu\leq\lambda$ if $%
\lambda-\mu$ belongs to $Q_{+}=\bigoplus_{i=1}^{n}\mathbb{N\alpha}_{i}$.

\begin{examples}
\ 

\begin{enumerate}
\item For $\mathfrak{g=gl}_{n},$ $P_{+}=\{\lambda =(\lambda _{1},\ldots
,\lambda _{n})\in \mathbb{Z}^{n}\mid \lambda _{1}\geq \cdots \geq \lambda
_{n}\geq 0\}$ and $W$ coincides with the permutation group on $\mathcal{A}%
_{n}=\{1,\ldots ,n\}.$ We have $N=n$. For any $\sigma \in W$ and any $%
v=(v_{1},\ldots ,v_{n}),$ $\sigma \cdot v=(v_{\sigma (1)},\ldots ,v_{\sigma
(n)}).$

\item For $\mathfrak{g=sp}_{2n}$, $P_{+}=\{\lambda =(\lambda _{1},\ldots
,\lambda _{n})\in \mathbb{Z}^{n}\mid \lambda _{1}\geq \cdots \geq \lambda
_{n}\geq 0\}$ and $W$ coincides with the group of signed permutations on $%
\mathcal{C}_{n}=\{\overline{n},\ldots ,\overline{1},1,\cdots ,n\}$, i.e. the
permutations $w$ of $\mathcal{B}_{n}$ such that $\overline{w(x)}=w(\overline{%
x})$ for any $x\in \mathcal{B}_{n}$ (here we write $\overline{x}$ for $-x$
so that $\overline{\overline{x}}=x$). We have $N=n$.\ For any $\sigma \in W$
and any $v=(v_{1},\ldots ,v_{n}),$ $\sigma \cdot v=(\theta _{1}v_{\left|
\sigma (1)\right| },\ldots ,\theta _{n}v_{\left| \sigma (1)\right| })$ where 
$\theta _{i}=1$ if $\sigma (i)$ is positive and $\theta _{i}=-1$ otherwise.

\item For $\mathfrak{g=so}_{2n+1},$ $P_{+}=\{\lambda =(\lambda _{1},\ldots
,\lambda _{n})\in \mathbb{Z}^{n}\sqcup (\mathbb{Z}+\frac{1}{2})^{n}\mid
\lambda _{1}\geq \cdots \geq \lambda _{n}\geq 0\}.$ The Weyl group $W$ is
the same as for $\mathfrak{sp}_{2n}$.

\item For $\mathfrak{g=so}_{2n},$ $P_{+}=\{\lambda =(\lambda _{1},\ldots
,\lambda _{n})\in \mathbb{Z}^{n}\sqcup (\mathbb{Z}+\frac{1}{2})^{n}\mid
\lambda _{1}\geq \cdots \geq \lambda _{n-1}\geq \left| \lambda _{n}\right|
\geq 0\}.$ The Weyl group is the subgroup of the signed permutations on $%
\mathcal{B}_{n}$ switching and even number of signs.
\end{enumerate}
\end{examples}

\subsection{Highest weight modules}

\label{subsec_hwmod}

Let $U(\mathfrak{g})$ be the enveloping algebra associated to $\mathfrak{g}.$
Each finite dimensional $\mathfrak{g}$ (or $U(\mathfrak{g})$)-module $M$
admits a decomposition in weight spaces 
\begin{equation*}
M=\bigoplus_{\mu\in P}M_{\mu}
\end{equation*}
where 
\begin{equation*}
M_{\mu}:=\{v\in M\mid h(v)=\mu(h)v\text{ for any }h\in\mathfrak{h}\}
\end{equation*}
and $P$ is identified with a sublattice of $\mathfrak{h}_{\mathbb{R}}^{\ast}$%
. In particular, $(M\oplus M^{\prime})_{\mu}=M_{\mu}\oplus M_{\mu}^{\prime}$%
.\ The Weyl group $W$ acts on the weights of $M$ and for any $\sigma\in W$,
we have $\dim M_{\mu}=\dim M_{\sigma\cdot\mu}.$ The character of $M$ is the
Laurent polynomial in the variables $x_{1},\ldots,x_{N}$%
\begin{equation*}
\mathrm{char}(M)(x):=\sum_{\mu\in P}\dim(M_{\mu})x^{\mu}
\end{equation*}
where $\dim(M_{\mu})$ is the dimension of the weight space $M_{\mu}$ and $%
x^{\mu}=x_{1}^{\mu_{1}}\cdots x_{N}^{\mu_{N}}$ with $(\mu_{1},\ldots,\mu
_{N})$ the coordinates of $\mu$ on the standard basis $\{\varepsilon
_{1},\cdots,\varepsilon_{N}\}.$

The irreducible finite dimensional representations of $\mathfrak{g}$ are
labelled by the dominant weights. For each dominant weight $\lambda\in
P_{+}, $ let $V(\lambda)$ be the irreducible representation of $\mathfrak{g}$
associated to $\lambda$. The category $\mathcal{C}$ of finite dimensional
representations of $\mathfrak{g}$ over $\mathbb{C}$ is semisimple: each
module decomposes into irreducible components. The category $\mathcal{C}$ is
equivariant to the (semisimple) category of finite dimensional $U(\mathfrak{g%
})$-modules (over $\mathbb{C}$). Any finite dimensional $U(\mathfrak{g})$%
-module $M$ decomposes as a direct sum of irreducible 
\begin{equation*}
M=\bigoplus_{\lambda\in P_{+}}V(\lambda)^{\oplus m_{M,\lambda}}
\end{equation*}
where $m_{M,\lambda}$ is the multiplicity of $V(\lambda)$ in $M$. Here we
slightly abuse the notation by also denoting by $V(\lambda)$ the irreducible
f.d. $U(\mathfrak{g})$-module associated to $\lambda.$

When $M=V(\lambda)$ is irreducible, we set 
\begin{equation*}
s_{\lambda}(x):=\mathrm{char}(M)(x)=\sum_{\mu\in P}K_{\lambda,\mu}x^{\mu}
\end{equation*}
with $\dim(M_{\mu})=K_{\lambda,\mu}.$ Then $K_{\lambda,\mu}\neq0$ only if $%
\mu\leq\lambda$. The characters can be computed from the Weyl character
formula 
\begin{equation}
s_{\lambda}(x)=\frac{\sum_{w\in W}\varepsilon(w)x^{w(\lambda+\rho)-\rho}}{%
\prod_{\alpha\in R_{+}}(1-x^{-\alpha})}.  \label{WVF}
\end{equation}
where $\varepsilon(w)=(-1)^{\ell(w)}$ and $\rho=\frac{1}{2}\sum_{\alpha\in
R_{+}}\alpha_{i}$ is the half sum of positive roots.

Given $\delta,\mu$ in $P_{+}$ and a nonnegative integer $\ell$, we define
the tensor multiplicities $f_{\lambda/\mu,\delta}^{\ell}$ by 
\begin{equation}
V(\mu)\otimes V(\delta)^{\otimes\ell}\simeq\bigoplus_{\lambda\in
P_{+}}V(\lambda)^{\oplus f_{\lambda/\mu,\delta}^{\ell}}.  \label{def_f}
\end{equation}
For $\mu=0$, we set $f_{\lambda,\delta}^{\ell}=f_{\lambda/0,\delta}^{\ell}$.
When there is no risk of confusion, we write simply $f_{\lambda/\mu}^{\ell}$
(resp. $f_{\lambda}^{\ell}$) instead of $f_{\lambda/\mu,\delta}^{\ell}$
(resp. $f_{\lambda,\delta}^{\ell}$). We also define the
multiplicities $m_{\mu,\delta}^{\lambda}$ by 
\begin{equation}
V(\mu)\otimes
V(\delta)\simeq\bigoplus_{\mu\leadsto\lambda}V(\lambda)^{\oplus
m_{\mu,\delta}^{\lambda}}  \label{step}
\end{equation}
where the notation $\mu\leadsto\lambda$ means that $\lambda\in P_{+}$ and $%
V(\lambda)$ appears as an irreducible component of $V(\mu)\otimes V(\delta)$. We have in particular $%
m_{\mu,\delta}^{\lambda}=f_{\lambda/\mu,\delta}^{1}$.\ 

\begin{lemma}
\label{Lem_identity}Consider $\mu \in P_{+}.$ Then for any $\beta \in P,$ we
have the relation 
\begin{equation*}
\sum_{\lambda \in P_{+},\mu \leadsto \lambda }m_{\mu ,\delta }^{\lambda
}K_{\lambda ,\beta }=\sum_{\gamma \in P}K_{\mu ,\gamma }K_{\delta ,\beta
-\gamma }.
\end{equation*}
\end{lemma}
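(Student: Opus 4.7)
The plan is to compute the character of the tensor product $V(\mu)\otimes V(\delta)$ in two different ways and compare coefficients. Since the character map is a ring homomorphism from the Grothendieck ring of finite-dimensional $U(\mathfrak{g})$-modules into $\mathbb{Z}[P]$, and since characters of distinct irreducibles are linearly independent, any decomposition identity in $\mathcal{C}$ translates into an identity of Laurent polynomials whose coefficients are the Kostka-type numbers $K_{\lambda,\beta}$.

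First I would expand the left-hand side. Using the decomposition (\ref{step}) and then the definition $s_\lambda(x)=\sum_{\beta\in P}K_{\lambda,\beta}x^\beta$, one gets
\begin{equation*}
\mathrm{char}\bigl(V(\mu)\otimes V(\delta)\bigr)(x)=\sum_{\lambda\in P_+,\,\mu\leadsto\lambda} m_{\mu,\delta}^{\lambda}\,s_{\lambda}(x)=\sum_{\beta\in P}\Bigl(\sum_{\lambda\in P_+,\,\mu\leadsto\lambda} m_{\mu,\delta}^{\lambda}\,K_{\lambda,\beta}\Bigr)x^{\beta}.
\end{equation*}
Next I would expand the same character as a product, using $\mathrm{char}(V(\mu)\otimes V(\delta))=s_\mu\cdot s_\delta$, which gives
\begin{equation*}
s_\mu(x)\,s_\delta(x)=\Bigl(\sum_{\gamma\in P}K_{\mu,\gamma}x^{\gamma}\Bigr)\Bigl(\sum_{\eta\in P}K_{\delta,\eta}x^{\eta}\Bigr)=\sum_{\beta\in P}\Bigl(\sum_{\gamma\in P}K_{\mu,\gamma}K_{\delta,\beta-\gamma}\Bigr)x^{\beta}.
\end{equation*}
Equating the coefficients of $x^{\beta}$ on both sides yields exactly the claimed identity.

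There is no real obstacle: the only thing to check is that the coefficient extraction in the product $s_\mu\cdot s_\delta$ is well defined, i.e.\ that the inner sum $\sum_{\gamma}K_{\mu,\gamma}K_{\delta,\beta-\gamma}$ is finite for every fixed $\beta$. This is immediate because $K_{\mu,\gamma}\neq 0$ forces $\gamma\leq\mu$ (and hence $\gamma$ lies in the finite set of weights of $V(\mu)$), so only finitely many $\gamma$ contribute. The identity is therefore a direct consequence of the multiplicativity of characters together with the weight-space definition of the Kostka numbers $K_{\lambda,\beta}$ introduced in Section~\ref{subsec_hwmod}.
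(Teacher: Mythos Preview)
Your proof is correct and follows exactly the same approach as the paper: compute $s_\mu(x)s_\delta(x)$ in two ways—once via the decomposition~(\ref{step}) and once as a product of the weight expansions—and compare the coefficient of $x^\beta$. Your added remark on the finiteness of the inner sum is a harmless extra justification.
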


\begin{proof}
According to (\ref{step}), one gets 
\begin{equation*}
s_{\mu }(x)\times s_{\delta }(x)=\sum_{\mu \leadsto \lambda }\sum_{\beta \in
P}m_{\mu ,\delta }^{\lambda }K_{\lambda ,\beta }x^{\beta }.
\end{equation*}
On the other hand 
\begin{equation*}
s_{\mu }(x)\times s_{\delta }(x)=\sum_{\gamma \in P}K_{\mu ,\gamma
}x^{\gamma }\sum_{\xi \in P}K_{\delta ,\xi }x^{\xi }=\sum_{\beta \in
P}\sum_{\gamma \in P}K_{\mu ,\gamma }K_{\delta ,\beta -\gamma }x^{\beta }.
\end{equation*}
By comparing both expressions, we derive the expected relation.
\end{proof}

\subsection{Minuscule representations}

\label{subsec_minus}

The irreducible representation $V(\delta)$ is said minuscule when the orbit $%
W\cdot\delta$ of the highest weight $\delta$ under the action of the Weyl group $%
W$ contains all the weights of $V(\delta).$ In that case, the dominant weight $\delta$ is also called
minuscule. The minuscule weights are fundamental weights and each weight
space in $V(\delta)$ has dimension $1$. They are given by the following
table 
\begin{equation*}
\begin{tabular}{|l|l|l|l|}
\hline
type & minuscule weights & $N$ & decomposition on $B$ \\ \hline
$A_{n}$ & $\omega_{i},i=1,\ldots,n$ & $n+1$ & $\omega_{i}=\varepsilon
_{1}+\cdots+\varepsilon_{i}$ \\ \hline
$B_{n}$ & $\omega_{n}$ & $n$ & $\omega_{n}=\frac{1}{2}(\varepsilon_{1}+%
\cdots+\varepsilon_{n})$ \\ \hline
$C_{n}$ & $\omega_{1}$ & $n$ & $\omega_{1}=\varepsilon_{1}$ \\ \hline
$D_{n}$ & $\omega_{1},\omega_{n-1},\omega_{n}$ & $n$ & $\omega_{1}=%
\varepsilon_{1},\omega_{n+t}=\frac{1}{2}(\varepsilon_{1}+\cdots
+\varepsilon_{n})+t\varepsilon_{n}$, $t\in\{-1,0\}$ \\ \hline
$E_{6}$ & $\omega_{1},\omega_{6}$ & $8$ & $\omega_{1}=\frac{2}{3}%
(\varepsilon_{8}-\varepsilon_{7}-\varepsilon_{6}),\omega_{6}=\frac{1}{3}%
(\varepsilon_{8}-\varepsilon_{7}-\varepsilon_{6})+\varepsilon_{5}$ \\ \hline
$E_{7}$ & $\omega_{7}$ & $8$ & $\omega_{7}=\varepsilon_{6}+\frac{1}{2}%
(\varepsilon_{8}-\varepsilon_{7}).$ \\ \hline
\end{tabular}%
\end{equation*}
Here,  the four infinite families $A_{n},B_{n},C_{n}$ and $D_{n}$
correspond respectively to the classical Lie algebras $\mathfrak{g=gl}_{n},\mathfrak{g=so}%
_{2n+1},\mathfrak{g=sp}_{2n}$ and $\mathfrak{g=so}_{2n}$. When 
$\delta$ is minuscule, one may check that each nonzero multiplicity $%
m_{\mu,\delta}^{\lambda}$ in (\ref{step}) is equal to $1$.

\bigskip

\noindent\textbf{Remark:} We will also need the following classical
properties of minuscule representations.

\begin{enumerate}
\item If $\delta $ is minuscule, then for any $\lambda ,\mu \in P_{+},$ 
one gets $
K_{\delta ,\lambda -\mu }=m_{\mu ,\delta }^{\lambda }\in \{0,1\}$.

\item If $\delta $ is not a minuscule weight, there exists a dominant weight 
$\kappa \neq \delta $ such that $\kappa $ is a weight for $V(\delta ), $
that is $K_{\delta ,\kappa }\neq 0$. This follows from the fact that $%
V(\delta )$ contains a weight $\beta $ which does not belong to $W\cdot
\delta $. Then we can take $\kappa =P_{+}\cap W\cdot \beta $.
\end{enumerate}

\subsection{Paths in a weight lattice}

\label{subsec_paths}

Consider $\delta$ a dominant weight associated to the Lie algebra $\mathfrak{%
g}$.\ We denote by $Z_{\gamma}(\delta,\ell)$ the set of paths of length $\ell
$ in the weight lattice $P$ starting at $\gamma\in P$ with steps the set of
weights of $V(\delta),$ that is the weights $\beta$ such that $%
V(\delta)_{\beta}\neq\{0\}$. When $\gamma=0$, we write for short $%
Z(\delta,\ell)=Z_{0}(\delta,\ell)$.\ A path in $Z_{\gamma}(\delta,\ell)$ can
be identified with a sequence $(\mu^{(0)},\mu^{(1)}\ldots,\mu^{(\ell)})$ of
weights of $V(\delta)$ such that $\mu^{(0)}=\gamma.\;$The position at the $k$%
-th step corresponds to the weight $\mu^{(k)}.$ When $\delta$ is a minuscule
weight, the paths of $Z_{\gamma}(\delta,\ell)$ will be called minuscule. We
will see in \S\ \ref{subsec_crys} that the paths in $Z(\delta,\ell)$ can
then be identified with the vertices of the $\ell$-th tensor product of the
Kashiwara crystal graph associated to the representation $V(\delta)$.

Assume $\gamma\in P_{+}.$ We denote by $Z_{\gamma}^{+}(\delta,\ell)$ the set
of paths of length $\ell$ which never exit the closed Weyl chamber $%
\overline{C}$. Since $\overline{C}$ is convex, this is equivalent to say
that $\mu^{(k)}\in P_{+}$ for any $k=0,\ldots,\ell$.

\begin{examples}
\ 

\begin{enumerate}
\item For $\mathfrak{g=gl}_{n}$ and $\delta =\omega _{1},$ the representation $V(\omega _{1})$
has dimension $n$.\ The weights of $V(\omega _{1})$ are the vectors $%
\varepsilon _{1},\ldots ,\varepsilon _{n}$ of the standard basis $B$.\ The
paths $Z(\omega _{1},\ell )$ are those appearing in the classical ballot
problem.

\item For $\mathfrak{g=sp}_{2n}$ and $\delta =\omega _{1}$, the representation  $V(\omega _{1})$
has dimension $2n$.\ The weights of $V(\omega _{1})$ are the vectors $\pm
\varepsilon _{1},\ldots ,\pm \varepsilon _{n}$.\ In particular a path of $%
Z_{\gamma }(\omega _{1},\ell )$ can return to its starting point $\gamma $.

\item For $\mathfrak{g=so}_{2n+1}$ and $\delta =\omega _{1}$, the representation $V(\omega _{1})
$ has dimension $2n+1$.\ The weights of $V(\omega _{1})$ are $0$ and the
vectors $\pm \varepsilon _{1},\ldots ,\pm \varepsilon _{n}$.\ The
representation $V(\omega _{1})$ is not minuscule$.$

\item For $\mathfrak{g=s0}_{2n+1}$ and $\delta =\omega _{n},$ the representation  $V(\delta )$
has dimension $2^{n}$.\ The coordinates of the weights appearing in $%
V(\delta )$ on the standard basis are of the form $(\beta _{1},\ldots ,\beta
_{n})$ where for any $i=1,\ldots ,n,$ $\beta _{i}=\pm \frac{1}{2}.$
\end{enumerate}
\end{examples}

\noindent\textbf{Remark:} In type $A,$ the coordinates on the standard basis
of the weights appearing in any representation are always nonnegative.\ This
notably implies that a path in $Z_{\gamma}(\delta,\ell)$ can attain a fixed
point of $\mathbb{R}^{N}$ at most one time.\ This special property does
not hold in general for the paths of $Z_{\gamma}(\delta,\ell)$ in types
other than type $A.\;$Indeed, the sign of the weight coordinates can be
modified under the action of the Weyl group. In particular $%
Z_{\gamma}(\delta,\ell)$ contains paths of length $\ell>0$ which return to $%
\gamma$. This phenomenon introduces some complications in the probabilistic
estimations which follows.

\section{Background on Markov chains}

\label{Sec_Markov}

\subsection{Markov chains and conditioning}

Consider a probability space $(\Omega,\mathcal{T},\mathbb{P})$ and a
countable set $M$. Let $Y=(Y_{\ell})_{\ell\geq1}$ be a sequence of random
variables defined on $\Omega$ with values in $M$. The sequence $Y$ is a
Markov chain when 
\begin{equation*}
\mathbb{P}(Y_{\ell+1}=y_{\ell+1}\mid Y_{\ell}=y_{\ell},\ldots,Y_{1} =y_{1})=%
\mathbb{P}(Y_{\ell+1}=y_{\ell+1}\mid Y_{\ell}=y_{\ell}) 
\end{equation*}
for any any $\ell\geq1$ and any $y_{1},\ldots,y_{\ell},y_{\ell+1}\in M$. The
Markov chains considered in the sequel will also be assumed time
homogeneous, that is $\mathbb{P}(Y_{\ell+1}=y_{\ell+1}\mid
Y_{\ell}=y_{\ell})=\mathbb{P} (Y_{\ell}=y_{\ell+1}\mid Y_{\ell-1}=y_{\ell})$
for any $\ell\geq2$.\ For all $x,y$ in $M$, the transition probability from $%
x$ to $y$ is then defined by 
\begin{equation*}
\Pi(x,y)=\mathbb{P}(Y_{\ell+1}=y\mid Y_{\ell}=x) 
\end{equation*}
and we refer to $\Pi$ as the transition matrix of the Markov chain $Y$. The
distribution of $Y_1$ is called the initial distribution of the chain $Y$.
It is well known that the initial distribution and the transition
probability determine the law of the Markov chain and that given a probability distribution and a transition matrix on M, there exists an associated Markov chain.\newline

An example of Markov chain is given by random walk on a group. Suppose that 
$M$ has a group structure and that $\nu$ is a probability measure on $M$;
the random walk of law $\nu$ is the Markov chain with transition
probabilities $\Pi(x,y)=\nu\left(x^{-1}y\right)$; this Markov chain starting
at the neutral element of $M$ can be realized has $(X_1X_2\ldots
X_\ell)_{\ell\geq1}$ where $(X_\ell)_{\ell\geq1}$ is a sequence of
independent and identically distributed random variables, with law $\nu$.%
\newline

Let $Y$ be a Markov chain on $(\Omega,\mathcal T, \mathbb P)$, whose initial distribution has
full support,  i.e.  $ \mathbb P (Y_1=x)>0$ for any $x\in M$. Let $\mathcal{C}$ be a nonempty subset of $M$ and
consider the event $S=(Y_{\ell}\in \C$ for any $\ell\geq1)$. Assume that $%
\mathbb{P}(S\mid Y_{1}=\lambda)>0$ for all $\lambda\in \C$. This implies
that $\mathbb{P}[S]>0$, and we can consider the conditional probability $%
\mathbb{Q}$ relative to this event: $\mathbb{Q}[\cdot]=\mathbb{P}[\cdot|S]$.

It is easy to verify that, under this new probability $\mathbb{Q}$, the
sequence $(Y_{\ell})$ is still a Markov chain, with values in $\mathcal{C}$,
and with transitions probabilities given by 
\begin{equation}
\mathbb{Q}[Y_{\ell+1}=\lambda\mid Y_{\ell}=\mu]=\mathbb{P}[Y_{\ell+1}
=\lambda\mid Y_{\ell}=\mu]\frac{\mathbb{P}[S\mid Y_{1}=\lambda]} {\mathbb{P}%
[S\mid Y_{1}=\mu]}  \label{reco}
\end{equation}
We will denote by $Y^\C$ this Markov chain and  by $\Pi^\mathcal{C}$ the restriction of the transition matrix 
$\Pi$ to the entries which belong to  $\C$ (in other words  
$\displaystyle 
\Pi^\mathcal{C}=\left(\Pi(\lambda,\mu)\right)_{\lambda,\mu\in \mathcal{C}}). 
$

\subsection{Doob $h$-transforms}

\label{sub_sec_Doobh}

A \emph{substochastic matrix} on the countable set $M$ is a map $\Pi:M\times
M\rightarrow\lbrack0,1]$ such that $\sum_{y\in M} \Pi(x,y)\leq1$ for any $%
x\in M.\;$
If 
$\Pi,\Pi^{\prime}$ are substochastic matrices on $M$, we define their
product $\Pi\times\Pi^{\prime}$ as the substochastic matrix given by the
ordinary product of matrices: 
\begin{equation*}
\Pi\times\Pi^{\prime}(x,y)=\sum_{z\in M}\Pi(x,z)\Pi^{\prime}(z,y). 
\end{equation*}

The matrix $\Pi^\C$ defined in the previous subsection is an example of
substochastic matrix.\newline

A function $h:M\rightarrow\mathbb{R}$ is \emph{harmonic} for the
substochastic transition matrix $\Pi$ when we have $\sum_{y\in
M}\Pi(x,y)h(y)=h(x)$ for any $x\in M$. Consider a strictly positive harmonic
function $h$. We can then define the Doob transform of $\Pi$ by $h$
 (also called the $h$-transform of $\Pi$) 
setting 
\begin{equation*}
\Pi_{h}(x,y)=\frac{h(y)}{h(x)}\Pi(x,y). 
\end{equation*}

We then have $\sum_{y\in M}\Pi_{h}(x,y)=1$ for any $x\in M.\;$Thus $\Pi_{h}$
can be interpreted as the transition matrix for a certain Markov chain.

An example is given in the second part of the previous subsection (see
formula (\ref{reco})): the state space is now $\mathcal{C}$, the
substochastic matrix is $\Pi^\mathcal{C}$ and the harmonic function is $h_{%
\mathcal{C}}(\lambda):=\mathbb{P}[S\mid Y_{1}=\lambda]$; the transition
matrix $\Pi^\mathcal{C}_{h_\mathcal{C}}$ is the transition matrix of the
Markov chain $Y^\C$.

\subsection{Green function and Martin kernel}

Let $\Pi$ be a substochastic matrix on the set $M$. Its Green function is
defined as the series 
\begin{equation*}
\Gamma(x,y)=\sum_{\ell\geq0}\Pi^{\ell}(x,y). 
\end{equation*}
(If $\Pi$ is the transition matrix of a Markov chain, $\Gamma(x,y)$ is the
expected value of the number of passage at $y$ of the Markov chain starting
at $x$.)

Assume there exists $x^{\ast}$ in $M$ such that $0<\Gamma(x^{\ast},y)<\infty$
for any $y\in M$. Fix such a point $x^{\ast}$.\ The Martin kernel associated
to $\Pi$ (with reference point $x^{\ast}$) is then defined by 
\begin{equation*}
K(x,y)=\frac{\Gamma(x,y)}{\Gamma(x^{\ast},y)}. 
\end{equation*}

Consider a positive harmonic function $h$ and denote by $\Pi_{h}$  the $h$%
-transform of $\Pi$. Consider the Markov chain $Y^{h}=\left(Y^{h}_\ell%
\right)_{\ell\geq1}$ starting at $x^{\ast}$ and  whose transition matrix is $\Pi_{h}$.

\begin{theorem}
\label{Th_Doob}(Doob) Assume that there exists a function $f:M\rightarrow 
\mathbb{R}$ such that for all $x\in M$, $\lim_{\ell\rightarrow+\infty
}K(x,Y_{\ell}^{h})=f(x)$ almost surely. Then there exists a positive real
constant $c$ such that $f=ch$.
\end{theorem}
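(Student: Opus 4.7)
The plan is to show $f$ is a $\Pi$-harmonic function proportional to $h$, in three stages: a normalization, a harmonicity deduction from the Green-function identity along sample paths, and an identification of the proportionality constant via martingale convergence. First, since $K(x^*, y) = \Gamma(x^*, y)/\Gamma(x^*, y) = 1$ for every $y$ with $\Gamma(x^*, y) > 0$, the hypothesis gives immediately $f(x^*) = 1$, so the constant is forced to be $c = 1/h(x^*)$.

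To show $f$ is $\Pi$-harmonic, I would use the Green-function identity $\Gamma = I + \Pi\,\Gamma$, which after dividing by $\Gamma(x^*, y)$ reads
\begin{equation*}
K(x, y) \;=\; \frac{\delta_{x, y}}{\Gamma(x^*, y)} \;+\; \sum_{z \in M} \Pi(x, z)\, K(z, y),
\end{equation*}
so $K(\cdot, y)$ is $\Pi$-harmonic on $M \setminus \{y\}$. Fix $x$. Since $\Pi_h$ is stochastic (by harmonicity of $h$) and transient (since $\Gamma$ is finite-valued at $x^*$, whence so is $\Gamma_h$), under $\mathbb{P}^h_{x^*}$ one has $Y^h_\ell \neq x$ for all $\ell$ sufficiently large, and then $K(x, Y^h_\ell) = \sum_z \Pi(x, z)\, K(z, Y^h_\ell)$. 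Passing to $\ell \to \infty$ and using Fatou together with $\sum_z \Pi(x, z) \leq 1$ to justify interchange of limit and sum, we obtain $f(x) = \sum_z \Pi(x, z) f(z)$. Hence $g := f/h$ is $\Pi_h$-harmonic and $(g(Y^h_\ell))_\ell$ is a non-negative $\mathbb{P}^h_{x^*}$-martingale converging almost surely to some $L \geq 0$.

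The value of $L$ is pinned down by feeding the hypothesis back in with $x$ varying: for any $x, x' \in M$, the ratio $\Gamma(x, Y^h_\ell)/\Gamma(x', Y^h_\ell) = K(x, Y^h_\ell)/K(x', Y^h_\ell)$ converges almost surely to the deterministic value $f(x)/f(x')$, so $Y^h$ converges $\mathbb{P}^h_{x^*}$-almost surely in the Martin compactification of $M$ to a single non-random boundary point $\xi_0$ at which the extended kernel satisfies $K(x, \xi_0) = f(x)$ for every $x$. The Martin--Poisson integral representation of the positive $\Pi$-harmonic function $h$ then collapses to $h(x) = h(x^*)\, K(x, \xi_0) = h(x^*)\, f(x)$, which gives the desired identity $f = c h$ with $c = 1/h(x^*)$.

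The main obstacle is this last identification step: making rigorous the passage from the deterministic almost-sure limits of every Martin kernel along the chain to almost-sure convergence of the trajectory of $Y^h$ to a single point of the Martin boundary and, correspondingly, to the identification of the martingale limit $L$ with the constant $g(x^*) = 1/h(x^*)$. This either relies on the full Martin boundary representation theorem for positive harmonic functions or, more directly, on a Markov-property plus Fubini diagonal argument exploiting the hypothesized convergence simultaneously at every $x \in M$.
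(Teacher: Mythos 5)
Your overall plan --- push $Y^h$ to the Martin boundary, observe that the limit point is deterministic, then identify $f$ via the boundary integral representation --- is in substance the same as the paper's, which cites Woess for both the a.s.\ convergence to a boundary point (Theorem~7.19 of \cite{W}) and the disintegration formula $\nu_x(B)=\int_B K_h(x,\xi)\,\nu_{x^*}(d\xi)$ (Theorem~7.42). Two points about the intermediate material, though.

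First, the attempted proof that $f$ is $\Pi$-harmonic is both unnecessary and incomplete. From $\Gamma=I+\Pi\Gamma$ and transience you do get, for $\ell$ large, $K(x,Y^h_\ell)=\sum_z\Pi(x,z)K(z,Y^h_\ell)$. But Fatou applied to the right-hand side only gives
\begin{equation*}
f(x)=\lim_\ell K(x,Y^h_\ell)=\liminf_\ell\sum_z\Pi(x,z)K(z,Y^h_\ell)\geq\sum_z\Pi(x,z)f(z),
\end{equation*}
that is, \emph{super}harmonicity. The bound $\sum_z\Pi(x,z)\leq1$ is not a dominating function for the family $\bigl(K(\cdot,Y^h_\ell)\bigr)_\ell$, which need not be uniformly bounded, so you cannot invoke dominated convergence to get the reverse inequality; and indeed a Martin-kernel limit $K(\cdot,\xi)$ at a non-minimal boundary point is in general strictly superharmonic. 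The paper sidesteps this entirely: it never needs $f$ to be harmonic a priori, since $f=ch$ will give that for free at the end.

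Second, what you flag as the ``main obstacle'' is precisely the part the paper handles by the two Woess theorems. Once $Y^h$ converges a.s.\ in the Martin compactification, the hypothesis that $K_h(\cdot,Y^h_\infty)$ is a.s.\ the non-random function $\frac{h(x^*)}{h(\cdot)}f(\cdot)$, combined with the fact that the Martin kernels $\{K_h(\cdot,\xi)\}_{\xi\in\mathcal{M}}$ separate boundary points, forces $Y^h_\infty$ to be a deterministic boundary point $\xi_\infty$. Then taking $B=\mathcal{M}$ in $\nu_x(B)=\int_B K_h(x,\xi)\,\nu_{x^*}(d\xi)$, with $\nu_{x^*}=\delta_{\xi_\infty}$, gives $1=K_h(x,\xi_\infty)=\frac{h(x^*)}{h(x)}f(x)$ for all $x$. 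Your observation $f(x^*)=1$ is a correct (and nice) consistency check, but the identification of $f(x)$ for general $x$ really does require the Martin disintegration, not just martingale convergence of some auxiliary process. So: drop the harmonicity step, and carry out the Martin-boundary identification along the lines the paper does.
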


For the sake of completeness, we detail a proof of this Theorem in the
Appendix.

\section{Quotient renewal theorem for a random walk in a cone}

\label{Sec_Renewal} The purpose of this section is to establish a renewal
theorem for a random walk forced to stay in a cone. We state this
theorem in the weak form of a quotient theorem : see Theorem \ref{ren-q}.
This result is a key ingredient in our proof of Theorem \ref{Th_coincide} ; it  is a purely probabilistic result whose proof can be read
independently of the reminder of the article.

We begin by the statement and proof of a quotient local limit theorem for a
random walk forced to stay in a cone (Theorem \ref{tll-q}). This local
limit theorem is easier to establish  than the renewal one  and the ideas of its
proof will be reinvested in the proof of Theorem \ref{ren-q}.

\subsection{Probability to stay in a cone}

\label{stayC} Let $(X_{\ell})_{{\ell}\geq 1}$ be a sequence of random
variables in an Euclidean space ${\mathbb{R}}^n$, independent and
identically distributed, defined on a probability space $(\Omega,{\mathcal{T}%
},\mathbb{P})$. We assume that these variables have moment of order 1 and
denote by $m$ their common mean. Let us denote by $(S_{\ell})_{{\ell}\geq 0}$
the associated random walk defined by $S_0=0$ and $S_{\ell}:=
X_1+\cdots+X_{\ell}$. We consider a cone $\mathcal{C}$ in $\mathbb{R}^n$. We
assume it contains an open convex sub-cone $\mathcal{C}_0$ such that $m\in%
\mathcal{C}_0$ and $\mathbb{P}[X_\ell\in \mathcal{C}_0] >0.$ First, one gets the

\begin{lemma}
\label{psc} 
\begin{equation*}
\mathbb{P}\left[ \forall {\ell} \geq 1, \ S_{\ell} \in \mathcal{C}\right] >0.
\end{equation*}
\end{lemma}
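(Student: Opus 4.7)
My plan is to prove the stronger statement $\mathbb{P}[\forall \ell \geq 1,\ S_\ell \in \mathcal{C}_0] > 0$, from which the claim follows immediately since $\mathcal{C}_0 \subseteq \mathcal{C}$. Set $\pi(y) := \mathbb{P}[\forall \ell \geq 1,\ y + S_\ell \in \mathcal{C}_0]$, so that the goal is $\pi(0) > 0$. A basic feature of $\pi$ is the monotonicity
$$\pi(y + z) \geq \pi(y) \quad \text{for every } z \in \mathcal{C}_0,$$
because the convex cone $\mathcal{C}_0$ is closed under addition: if $y + S_\ell \in \mathcal{C}_0$ for every $\ell$, then $(y+z) + S_\ell = (y + S_\ell) + z \in \mathcal{C}_0$ also.

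Next, I would show that $\pi(\lambda m) \to 1$ as $\lambda \to +\infty$. Pick $\delta > 0$ with $B(m, \delta) \subset \mathcal{C}_0$; since $\mathcal{C}_0$ is a cone, $\lambda m + S_\ell \in \mathcal{C}_0$ whenever $\|S_\ell - \ell m\| < (\lambda + \ell)\delta$. The SLLN $S_\ell/\ell \to m$ almost surely, combined with a routine split of the supremum at some large $N$ (for $\ell \geq N$ use the LLN bound on $\|S_\ell - \ell m\|/\ell$; for $\ell < N$ bound by a fixed quantity divided by $\lambda$), shows that $\sup_{\ell \geq 1}\|S_\ell - \ell m\|/(\lambda + \ell) \to 0$ almost surely as $\lambda \to \infty$. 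Dominated convergence gives $\pi(\lambda m) \to 1$, so there exists $\lambda_0$ with $\pi(\lambda_0 m) > 0$; by the monotonicity, $\pi > 0$ everywhere on $\lambda_0 m + \mathcal{C}_0$.

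It remains to drive the random walk into this region while keeping all intermediate sums in $\mathcal{C}_0$. Countable additivity, applied to a cover of $\mathcal{C}_0$ by balls and using the hypothesis $\mathbb{P}[X_1 \in \mathcal{C}_0] > 0$, furnishes $v \in \mathcal{C}_0$ and $r > 0$ with $B(v,r) \subset \mathcal{C}_0$ and $q := \mathbb{P}[X_1 \in B(v,r)] > 0$. On the event $D_M := \{X_1, \ldots, X_M \in B(v,r)\}$ (of probability $q^M$), each partial sum lies in $\ell \cdot B(v,r) = B(\ell v, \ell r) \subset \mathcal{C}_0$ for $\ell = 1, \ldots, M$ by the cone property, and $S_M - \lambda_0 m \in M \cdot B(v - \lambda_0 m/M,\ r)$, which lies in $\mathcal{C}_0$ as soon as $M$ is large enough (since $v - \lambda_0 m/M \to v$ and $B(v,r)$ is strictly inside the open cone). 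Fixing such an $M$, the Markov property yields
$$\mathbb{P}[\forall \ell \geq 1,\ S_\ell \in \mathcal{C}_0] \;\geq\; \mathbb{E}\bigl[\mathbf{1}_{D_M}\, \pi(S_M)\bigr],$$
which is strictly positive because $\mathbb{P}(D_M) > 0$ and $\pi(S_M) > 0$ on $D_M$.

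The main obstacle is the tension between the LLN, which only controls $S_\ell$ for large $\ell$, and the requirement that the walk stay in the cone starting from $\ell = 1$. The trick of forcing the first $M$ increments to a ball $B(v,r) \subset \mathcal{C}_0$ addresses both issues simultaneously: the convex-cone structure of $\mathcal{C}_0$ keeps every intermediate sum in $\mathcal{C}_0$, and meanwhile $S_M$ is pushed deep enough into the cone that the subsequent walk, which drifts along $m \in \mathcal{C}_0$, remains in $\mathcal{C}_0$ with positive probability.
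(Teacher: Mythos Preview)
Your proof is correct and follows essentially the same approach as the paper's: both reduce to the open convex subcone $\mathcal{C}_0$, use the strong law of large numbers to show that some translate of the walk stays in $\mathcal{C}_0$ forever with positive probability, and then force the first finitely many increments into a small ball inside $\mathcal{C}_0$ (charged by the law of $X_1$) so that the partial sums stay in $\mathcal{C}_0$ while reaching a point from which the translated-walk estimate applies. The only cosmetic differences are that you package the translate via the function $\pi$ and its monotonicity, and you translate in the direction $m$ rather than in the direction of a support point $a$ as the paper does; these do not change the substance of the argument.
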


\begin{proof}
It suffices to prove the lemma for $\mathcal{C}_0$, that is, we can assume
(and we will in the sequel) that $\mathcal{C}$ is open and convex. Fix $a$
in $\mathcal{C}$ such that $\mathbb{P}\left[X_\ell\in B(a,\varepsilon)\right]%
>0$, for any $\varepsilon>0$. Such an element does exist since the cone is
charged by the law of $X$.

By the strong law of large numbers, the sequence $\left(\frac1{\ell}
S_{\ell}\right)$ converges almost surely to $m$. Therefore, almost surely,
one gets $S_{\ell}\in\mathcal{C}$ for any large enough ${\ell}$, that is 
\begin{equation*}
\mathbb{P}\left[\exists L, \forall \ell\geq L, S_\ell\in\mathcal{C}\right]%
=\lim_{L\to+\infty} \mathbb{P}\left[\forall {\ell}\geq L,\ S_{\ell}\in%
\mathcal{C}\right]=1.
\end{equation*}

For any $x\in\mathbb{R}^n$, there exists $k\in\mathbb{N}$ such that $x+ka\in%
\mathcal{C}$. Thus 
\begin{equation*}
\left(\forall {\ell}\geq L,\ S_{\ell}\in\mathcal{C}\right)=\bigcup_{k\geq0}%
\left((\forall {\ell}\geq L,\ S_{\ell}\in\mathcal{C}) \text{ and } (\forall {%
\ell}<L, \ S_{\ell}+ka\in\mathcal{C})\right) 
\end{equation*}
and therefore 
$\displaystyle 
\left(\forall {\ell}\geq L,\ S_{\ell}\in\mathcal{C}\right)\subset\bigcup_{k%
\geq0}\left(\forall {\ell}, \ S_{\ell}+ka\in\mathcal{C}\right) $
since the cone is stable under addition. Hence, there exists $k\geq0$ such that 
$\mathbb{P}\left[\forall {\ell}, \ S_{\ell}+ka\in\mathcal{C}\right]>0.$ Fix
such a $k$ and   $\delta>0$ such that $B(a,\delta)\subset\mathcal{C}$. If $%
X_{\ell}\in B(a,\frac\delta{k+1})$ for any ${\ell}\leq k+1$, then $%
S_{k+1}-ka\in B(a,\delta)$. We have 
\begin{multline*}
\mathbb{P}\left[ \forall {\ell} \geq 1, \ S_{\ell} \in \mathcal{C}\right]\geq
\\
\mathbb{P}\left[\left((\forall {\ell}\leq k+1,\ X_{\ell}\in B(a,\frac\delta{%
k+1})\right)\text{ and }\left(\forall {\ell}>k+1,\ S_{\ell}-S_{k+1}+ka\in%
\mathcal{C}\right)\right]= \\
\left(\mathbb{P}\left[X\in B(a,\frac\delta{k+1})\right]\right)^{k+1}\mathbb{P%
}\left[\forall {\ell},\ S_{\ell}+ka\in\mathcal{C}\right]>0.
\end{multline*}
\end{proof}

\subsection{Local Limit Theorem}

We now assume that the support $S_\mu$ of the law $\mu$ of the random variables 
$X_l$ is a subset of $\Z^n$. We suppose that $\mu$ is {\it adapted} on $\Z^n$, which means that the group $\langle S_\mu\rangle$ generated by the elements of $S_\mu$ is equal to $\Z^n$.

The local limit theorem precises the   behavior as $\ell \to +\infty$ of the probability $\mathbb{P}[S_\ell=g]$ for $g \in \Z^n$.
 It thus appears a phenomenon   of ``periodicity''. For instance, for the classical random walk to the closest neighbor on $\Z$, the walk $(S_\ell)_{\ell\geq 1}$ may visit an odd site only at an odd time ; this is due to the fact that in this case the support of $\mu$   is included in $2\Z+1$, that is   a coset of a proper subgroup of $\Z$. 
 
 Here is a classical definition : we say that the law $\mu$ is aperiodic if there is no proper subgroup $G$ of $\langle S_\mu\rangle$ and vector $b\in\langle S_\mu\rangle$ such that $S_\mu\subset b+G$. One may easily check that if the law $\mu$ is aperiodic then translating $\mu$ by $-b$ with $b\in S_\mu$ leads to a new law $\mu'$ which is aperiodic and such that $G=\langle S_{\mu'}\rangle$ is a proper subgroup of $\langle S_\mu\rangle$.
  
  Let us give some various examples :

%

\medskip

Examples : \newline
\textbf{1.} Denote by $(e_i)_{1\leq i\leq n}$ the canonical basis of $
\mathbb{R}^n$. If the variables $X_{\ell}$ take their values in the set $
\{e_i\mid 1\leq i\leq n\}\cup\{0\}$, if $\mathbb{P}[X_\ell=e_i]>0$ for any $i
$ and if $\mathbb{P}[X_\ell=0]>0$, then 
$\langle S_\mu\rangle=\Z^n$ and the law is aperiodic. \newline
\textbf{2.} If the variables $X_{\ell}$ take their values in $\{\pm e_i\mid
1\leq i\leq n\}$, if $\mathbb{P}[X_\ell=e_i]>0$ for any $i$ and if $\mathbb{P%
}[X_\ell=-e_i]>0$ for at least one $i$, then we can take $b=e_1$ and $%
G=\{(x_1,x_2,\ldots,x_n)\in\mathbb{Z}^n\mid x_1+x_2+\ldots+x_n\text{ is even}%
\}$ is the subgroup of $\mathbb{Z}^n$ generated by the $e_i+e_j, 1\leq
i,j\leq n$. \newline
\textbf{3.} If the variables $X_{\ell}$ take their values in $\{e_i\mid
1\leq i\leq n\}$ and if $\mathbb{P}[X_\ell=e_i]>0$ for any $i$, then we can
take $b=e_1$ and $G$ is the $n-1$-dimensional subgroup generated by the
vectors $e_i-e_1$, $2\leq i\leq n$.\newline

The translation by $-b$ thus permits to limit ourselves to aperiodic random
walks in a group of suitable dimension $d$, (with possibly $d<n$). By replacing the random walk $%
(S_{\ell})$ by the random walk $(S_{\ell}-{\ell}b)$, we can therefore
restrict ourselves to an adapted aperiodic random walk in the discrete group 
$G$.

We denote   by $d$ the dimension of the group $G$ once this
translation is performed. We also assume that the random walk admits a
moment of order $2$. We write $m$ for the mean vector of $X$ and $\Gamma$ for
the covariance matrix. A classical form of the local limit theorem is the
following: 
\begin{equation*}
\lim_{{\ell}\to+\infty}\sup_{g\in G}\left|\frac{{\ell}^{d/2}}{v(G)}\mathbb{P}%
[S_{\ell}=g]-\mathcal{N}_\Gamma\left(\frac1{\sqrt{{\ell}}}(g-{\ell}%
m)\right)\right|=0, 
\end{equation*}
where $\mathcal{N}_\Gamma$ is the Gaussian density 
\begin{equation*}
\mathcal{N}_\Gamma(x)=(2\pi)^{-d/2} (\det{\Gamma})^{-1/2}\exp\left(-\frac12%
\, x\cdot\Gamma^{-1}\cdot{}^tx\right)\ ,\quad x\in\mathbb{R}^d, 
\end{equation*}
and $v(G)$ is the volume of an elementary cell of $G$. This result gives an
equivalent of $\mathbb{P}[S_{\ell}=g_{\ell}]$ \ providing that $g_{\ell}-{%
\ell}m=O(\sqrt {\ell})$. The local limit theorem for large deviations (see
the original article \cite{richter} or the classical book \cite{Ib-Lin})
yields and equivalent when $g_{\ell}-{\ell}m=o({\ell})$. It takes a
particularly simple form when $g_{\ell}-{\ell}m=o({\ell}^{2/3})$.

\begin{theorem}
\label{TLL-R} Assume the random variables $X_{\ell}$ have an exponential
moment, that is, there exists $t>0$ such that $\mathbb{E}\left[%
\exp(t|X_\ell|)\right]<+\infty$. Let $(a_{\ell})$ be a sequence of real
numbers such that $\lim a_{\ell} {\ell}^{-2/3}=0$. Then, when ${\ell}$ tends
to infinity, we have 
\begin{equation*}
\mathbb{P}[S_{\ell}=g]\sim v(G) {\ell}^{-d/2}\mathcal{N}_\Gamma\left(\frac1{%
\sqrt{{\ell}}}(g-{\ell}m)\right) 
\end{equation*}
uniformly in $g\in G$ such that $\|g-{\ell}m\|\leq a_{\ell}$.
\end{theorem}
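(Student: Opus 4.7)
The plan is to apply Cram\'er's method of conjugate distributions (exponential tilting), reducing the large-deviation estimate to the ordinary central local limit theorem for a tilted walk whose mean coincides with the target point. First I would introduce the logarithmic Laplace transform $L(t)=\log\E[e^{t\cdot X_1}]$, which is smooth on a neighbourhood of the origin thanks to the exponential-moment hypothesis, with $\nabla L(0)=m$ and $\nabla^{2}L(0)=\G$. By the inverse function theorem, for every $u$ near $m$ there is a unique small $t(u)$ with $\nabla L(t(u))=u$, and $t(u)=\G^{-1}(u-m)+O(\|u-m\|^{2})$. Given $\ell$ and $g\in G$ with $\|g-\ell m\|\leq a_\ell$, set $t_\ell:=t(g/\ell)$; the condition $a_\ell=o(\ell)$ guarantees that this is well defined for $\ell$ large, uniformly in such $g$.

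Next, introduce the tilted measure $\tilde\mu_{t_\ell}(dx):=e^{t_\ell\cdot x-L(t_\ell)}\,\mu(dx)$, whose support still generates $G$ and which has mean $\nabla L(t_\ell)=g/\ell$ and covariance $\tilde\G_{t_\ell}=\nabla^{2}L(t_\ell)\to\G$. The Esscher identity
\begin{equation*}
\PP[S_\ell=g]\;=\;e^{\ell L(t_\ell)-t_\ell\cdot g}\,\tilde\PP_{t_\ell}[\tilde S_\ell=g]
\end{equation*}
reduces the problem to two tasks. The first is to apply the ordinary local limit theorem recalled just before the statement to the tilted walk $\tilde S_\ell$ \emph{at its own mean} $g$, which yields
\begin{equation*}
\tilde\PP_{t_\ell}[\tilde S_\ell=g]\;=\;v(G)\,\ell^{-d/2}\,\cN_{\tilde\G_{t_\ell}}(0)\,(1+o(1)).
\end{equation*}
The second is to control the exponential prefactor via the Legendre transform $L^{*}$ of $L$: since $\ell L(t_\ell)-t_\ell\cdot g=-\ell L^{*}(g/\ell)$ and $L^{*}(m+v)=\tfrac12 v^{T}\G^{-1}v+O(\|v\|^{3})$, a Taylor expansion gives
\begin{equation*}
\ell L^{*}(g/\ell)\;=\;\frac{1}{2\ell}(g-\ell m)^{T}\G^{-1}(g-\ell m)+O\!\left(\frac{a_{\ell}^{3}}{\ell^{2}}\right).
\end{equation*}
The hypothesis $a_\ell=o(\ell^{2/3})$ is precisely what forces this cubic remainder to be $o(1)$, so the exponent matches the Gaussian density $\cN_\G\bigl(\ell^{-1/2}(g-\ell m)\bigr)$ in the limit; combined with $\cN_{\tilde\G_{t_\ell}}(0)\to(2\pi)^{-d/2}(\det\G)^{-1/2}$ this produces the announced equivalent.

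The hard part is arranging the uniformity in $g$, equivalently in the tilt $t_\ell$, of the standard local limit theorem invoked as a black box: one needs the Gnedenko estimate with an $o(1)$ remainder that is uniform over the family of tilted laws $\tilde\mu_{t_\ell}$ as $t_\ell$ ranges in a neighbourhood of $0$. This is exactly where the aperiodicity reduction of the preceding subsection matters, since it ensures that the Fourier transforms $\widehat{\tilde\mu}_{t_\ell}$ remain bounded away from $1$ outside any fixed neighbourhood of the origin of the dual cell, uniformly for small $t_\ell$; the smooth dependence of these Fourier transforms on $t_\ell$ then allows the usual inversion estimate to be carried out with remainder uniform in $t_\ell$. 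Once this uniform version is in hand (it is what is proved in \cite{richter} and in Chapter~7 of \cite{Ib-Lin}), the two asymptotics combine to give the theorem.
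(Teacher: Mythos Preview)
The paper does not actually give its own proof of this theorem: it is quoted as a known result, with a reference to Richter \cite{richter} and to Ibragimov--Linnik \cite{Ib-Lin}, and is then used as a black box in the proofs of Theorems~\ref{tll-q} and~\ref{ren-q} and of Lemma~\ref{lem-marc}. So there is no ``paper's own proof'' to compare against.

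That said, your sketch is the standard Cram\'er/Richter argument and is correct in outline: exponential tilting so that the tilted mean hits $g/\ell$, the Esscher identity splitting $\PP[S_\ell=g]$ into the tilted local probability times $e^{-\ell L^*(g/\ell)}$, the ordinary lattice LLT applied at the tilted mean, and the Taylor expansion of $L^*$ around $m$ whose cubic remainder $O(a_\ell^3/\ell^2)$ vanishes precisely under $a_\ell=o(\ell^{2/3})$. You also correctly identify the only delicate point, namely that the Gnedenko-type local estimate must hold uniformly over the family of tilted laws $\tilde\mu_{t_\ell}$ for small $t_\ell$; this is exactly what the aperiodicity reduction in the paper's preceding subsection is for, and it is what the cited references establish. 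Your proposal is therefore an accurate account of the classical proof that the paper is invoking.
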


Under the hypotheses of Theorem \ref{TLL-R}, we derive the following
equivalent. Given $(g_{\ell}), (h_{\ell})$ two sequences in $G$ such that $%
\lim {\ell}^{-2/3}\|g_{\ell}-{\ell}m\|=0$ and $\lim {\ell}%
^{-1/2}\|h_{\ell}\|=0$, we have 
\begin{equation}  \label{tll-equ}
\mathbb{P}[S_{\ell}=g_{\ell}+ h_{\ell}]\sim \mathbb{P}[S_{\ell}=g_{\ell} ].
\end{equation}
Observe that there exists a stronger version of this result where the
exponent $2/3$ is replaced by $1$ but the equivalent obtained is more
complicated, and we will not need it in the present article.

\subsection{A quotient LLT for the random walk restricted to a cone}

We assume that the hypotheses of the previous Subsection are satisfied.

\begin{theorem}
\label{tll-q}Assume the random variables $X_{\ell}$ are almost surely
bounded.
Let $(g_{\ell}), (h_{\ell})$ be two sequences in $G$ and $\alpha<2/3$ such
that $\lim {\ell}^{-\alpha}\|g_{\ell}-{\ell}m\|=0$ and $\lim {\ell}%
^{-1/2}\|h_{\ell}\|=0$. Then, when ${\ell}$ tends to infinity, we have 
\begin{equation*}
\mathbb{P}\left[S_1\in\mathcal{C},\ldots,S_{\ell}\in\mathcal{C}%
,S_{\ell}=g_{\ell}+h_{\ell}\right]\sim \mathbb{P}\left[S_1\in\mathcal{C}%
,\ldots,S_{\ell}\in\mathcal{C},S_{\ell}=g_n\right]. 
\end{equation*}
\end{theorem}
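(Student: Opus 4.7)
The plan is to split the random walk at an intermediate time and reduce to the unrestricted LLT equivalent (\ref{tll-equ}). Set $L := \lfloor \ell/2 \rfloor$. By the Markov property and independence of the increments,
$$A(\ell, z) := \mathbb{P}[S_1 \in \mathcal{C}, \ldots, S_\ell \in \mathcal{C}, S_\ell = z] = \sum_{y \in \mathcal{C}} p_{\ell - L}(y) \, q_L(y, z - y),$$
where $p_k(y) := \mathbb{P}[S_1, \ldots, S_k \in \mathcal{C}, S_k = y]$ and $q_L(y, w) := \mathbb{P}[y + T_1, \ldots, y + T_L \in \mathcal{C}, T_L = w]$ with $(T_j)$ an independent copy of $(S_j)$. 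The goal is to show $A(\ell, g_\ell + h_\ell)/A(\ell, g_\ell) \to 1$ by comparing the two sums term by term.

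Fix $\varepsilon \in (0, 1/6)$ and set $G_\ell := \{y \in \mathcal{C} : \|y - (\ell - L)m\| \leq \ell^{1/2 + \varepsilon}\}$. Since the $X_\ell$ are almost surely bounded, Hoeffding's inequality yields $\mathbb{P}[S_{\ell - L} \notin G_\ell] \leq C \exp(-c\, \ell^{2\varepsilon})$, so the contribution of $y \notin G_\ell$ to $A(\ell, z)$ is super-polynomially small, uniformly in $z$. Combining Lemma \ref{psc} with the unrestricted LLT (Theorem \ref{TLL-R}) applied to both factors for a well-chosen $y$ near $(\ell-L)m$ gives moreover the lower bound $A(\ell, g_\ell) \gtrsim \ell^{-d/2}$, so the bad contribution is genuinely negligible compared to the main term.

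The core of the argument is then the uniform approximation $q_L(y, z - y) \sim \mathbb{P}[T_L = z - y]$ for $y \in G_\ell$ and $z \in \{g_\ell,\, g_\ell + h_\ell\}$. Since $m$ lies in the open sub-cone $\mathcal{C}_0$, every $y \in G_\ell$ sits at distance $\Theta(\ell)$ from $\partial \mathcal{C}$, and Azuma--Hoeffding applied to the bounded martingale differences $T_j - jm$ forces the walk started at $y$ to remain in $\mathcal{C}$ up to time $L$ except on an event of probability $O(\exp(-c\,\ell))$. This is exponentially smaller than the LLT-scale probability $\mathbb{P}[T_L = z - y] \gtrsim L^{-d/2}$, yielding the required equivalence with much room to spare. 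Combining this with (\ref{tll-equ}), which gives $\mathbb{P}[T_L = g_\ell + h_\ell - y] \sim \mathbb{P}[T_L = g_\ell - y]$ uniformly in $y \in G_\ell$ (the hypotheses $\|g_\ell - y - Lm\| \leq o(\ell^\alpha) + O(\ell^{1/2+\varepsilon}) = o(L^{2/3})$ and $\|h_\ell\| = o(\sqrt{L})$ both hold, using $\alpha < 2/3$ and $\varepsilon < 1/6$), the ratio $A(\ell, g_\ell + h_\ell)/A(\ell, g_\ell)$ reduces to a weighted average of quantities tending uniformly to $1$, plus a super-polynomially small error. The hardest part will be the uniform equivalence $q_L(y, \cdot) \sim \mathbb{P}[T_L = \cdot]$: the cone-exit probability must be controlled uniformly in $y \in G_\ell$ against an LLT-scale polynomial decay, and this is precisely where the boundedness assumption on the $X_\ell$ --- giving exponential rather than polynomial tail estimates --- is essential.
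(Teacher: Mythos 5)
Your decomposition-at-an-intermediate-time strategy is the same as the paper's, but your choice of split time $L=\lfloor\ell/2\rfloor$ creates a genuine gap that the paper avoids by splitting at $b_\ell=[\ell^\beta]$ with $\alpha<\beta<2/3$.

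The gap is in the claimed lower bound $A(\ell,g_\ell)\gtrsim\ell^{-d/2}$. This is false once $\alpha>1/2$: even without the cone constraint, $\mathbb{P}[S_\ell=g_\ell]\sim v(G)\ell^{-d/2}\mathcal{N}_\Gamma(\ell^{-1/2}(g_\ell-\ell m))$, and the Gaussian factor decays like $\exp(-c\|g_\ell-\ell m\|^2/\ell)$, which goes to zero once $\|g_\ell-\ell m\|$ is of larger order than $\sqrt{\ell}$ (permitted here since $\alpha$ can be close to $2/3$). Hence $A(\ell,g_\ell)\leq\mathbb{P}[S_\ell=g_\ell]$ may be smaller than $\ell^{-d/2}$ by a factor as small as $\exp(-c\ell^{2\alpha-1})$. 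The only lower bound the paper establishes (Lemma \ref{lem-marc}) is the much weaker $A(\ell,g_\ell)\geq\exp(-c\ell^\alpha)$. For your bad-set contribution $\exp(-c'\ell^{2\varepsilon})$ to be negligible against this, you need $2\varepsilon>\alpha$. But you must also take $\varepsilon<1/6$ so that points in $G_\ell$ keep $\|g_\ell-y-Lm\|=o(\ell^{1/2+\varepsilon})+o(\ell^\alpha)=o(\ell^{2/3})$, which is required for Theorem \ref{TLL-R} applied to the second factor of length $L\asymp\ell$. These constraints are incompatible as soon as $\alpha>1/3$, so your argument, as written, does not cover the full range $\alpha<2/3$.

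The paper sidesteps this by decomposing at the polynomially short time $b_\ell=[\ell^\beta]$ and controlling $S_{b_\ell}$ inside a ball of radius $\delta b_\ell$ \emph{proportional} to the split time. Then Cram\'er--Chernoff gives a large-deviation tail $\mathbb{P}[\|S_{b_\ell}-b_\ell m\|\geq\delta b_\ell]\leq\exp(-cb_\ell)=\exp(-c\ell^\beta)$, which beats $\exp(-c'\ell^\alpha)$ because $\beta>\alpha$; simultaneously $\delta b_\ell=O(\ell^\beta)=o(\ell^{2/3})$, so the LLT still applies to the long remaining stretch of length $\ell-b_\ell$. Both requirements are satisfiable precisely because one is free to choose $\beta\in(\alpha,2/3)$, a freedom the fixed midpoint split lacks. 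Also note that the part you flag as hardest --- the uniform equivalence $q_L(y,\cdot)\sim\mathbb{P}[T_L=\cdot]$ for $y\in G_\ell$ --- is in fact the easy part (exponential cone-exit bound against polynomial LLT scale); the real difficulty, and the one the paper devotes Lemma \ref{lem-marc} and Garbit's lemma to, is obtaining a usable lower bound for the constrained probability itself, which your proposal asserts without justification and with an incorrect exponent.
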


The following lemma will play a crucial role; in order to keep a direct way
to our principal results, we postpone its proof in an appendix.

\begin{lemma}
\label{lem-marc} Assume the random variables $X_{\ell}$ are almost surely
bounded. Let $\alpha\in]1/2,2/3[$. If the sequence $({\ell}^{-\alpha}\|g_{\ell}-{\ell}%
m\|)$ is bounded, then there exists $c>0$ such that, for all large enough $%
\ell$, 
\begin{equation*}
\mathbb{P}\left[S_1\in\mathcal{C},\ldots,S_{\ell}\in\mathcal{C}%
,S_{\ell}=g_{\ell}\right]\geq\exp\left(-c\ell^\alpha\right). 
\end{equation*}
\end{lemma}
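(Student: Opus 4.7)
My plan is to build a family of specific trajectories whose total probability is at least $\exp(-c\ell^\alpha)$, by decomposing the walk into three phases: (I) a deterministic initial segment forcing the walk into the interior of $\mathcal{C}$, (II) a bulk segment handled via the local limit theorem (Theorem~\ref{TLL-R}), and (III) a bridge estimate that keeps the walk inside $\mathcal{C}$ afterwards. For the setup, I choose $\rho>0$ with $B(m,2\rho)\subset\mathcal{C}_0$, so that $\mathrm{dist}(tm,\partial\mathcal{C})\geq 2\rho t$ for every $t>0$ by homogeneity; in particular $\mathrm{dist}(g_\ell,\partial\mathcal{C})\geq \rho\ell$ for $\ell$ large, since $\|g_\ell-\ell m\|=O(\ell^\alpha)=o(\ell)$. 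I fix $a\in\mathcal{C}_0\cap\mathrm{supp}(\mu)$ (non-empty by the hypothesis $\mathbb{P}[X\in\mathcal{C}_0]>0$), set $p_0:=\mu(\{a\})>0$, and note that $\mathrm{dist}(ka,\partial\mathcal{C})\geq c_0k$ for some $c_0>0$.

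Set $L:=\lfloor\ell^\alpha\rfloor$. In Phase~(I), the event $E_L:=\{X_1=\cdots=X_L=a\}$ has probability $p_0^L\geq \exp(-c_1\ell^\alpha)$; on it, $S_k=ka\in\mathcal{C}$ for $k\leq L$ and $\mathrm{dist}(S_L,\partial\mathcal{C})\geq c_0\ell^\alpha$. For Phase~(II), conditionally on $E_L$ the increments $(S_{L+k}-S_L)_{k\geq 0}$ form a random walk independent of the past (Markov property), and the target displacement $g_\ell-La$ satisfies
$$\bigl\|g_\ell-La-(\ell-L)m\bigr\|\leq \|g_\ell-\ell m\|+L\|a-m\|=O(\ell^\alpha)=o\bigl((\ell-L)^{2/3}\bigr).$$
Theorem~\ref{TLL-R} then yields
$$\mathbb{P}[S_\ell=g_\ell\mid S_L=La]\geq \frac{c}{\ell^{d/2}}\exp(-C\ell^{2\alpha-1})\geq \exp(-c_2\ell^\alpha)$$
for $\ell$ large, using $\alpha<1$ so that $2\alpha-1<\alpha$.

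For Phase~(III), I need a lower bound on
$$q_\ell:=\mathbb{P}\bigl[S_k\in\mathcal{C}\text{ for all }L\leq k\leq \ell\bigm|S_L=La,\ S_\ell=g_\ell\bigr].$$
Under this conditioning, $(S_k)_{L\leq k\leq\ell}$ is a random-walk bridge whose two endpoints lie at Euclidean distance $\gg\sqrt{\ell-L}$ from $\partial\mathcal{C}$: the starting point $La$ at distance $\geq c_0\ell^\alpha\gg\sqrt{\ell}$ (here I crucially use $\alpha>1/2$), and the endpoint $g_\ell$ at distance $\geq \rho\ell$. A Donsker-type invariance principle for random-walk bridges reduces the estimate to the corresponding statement for a Brownian bridge between two deep-interior points, which stays in $\mathcal{C}$ with uniformly positive probability, giving $q_\ell\geq c_3>0$. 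Combining the three phases via the Markov property yields $\mathbb{P}[S_1,\dots,S_\ell\in\mathcal{C},\,S_\ell=g_\ell]\geq p_0^L\cdot\exp(-c_2\ell^\alpha)\cdot c_3\geq \exp(-c\ell^\alpha)$ for a suitable $c>0$.

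The principal obstacle is Phase~(III): making the bridge estimate rigorous for a discrete-valued random walk. The cleanest route is to couple the walk to a Brownian motion via a strong invariance principle (e.g.\ Koml\'os-Major-Tusn\'ady) and transfer the estimate from the continuous bridge; alternatively, one can argue directly by splitting the bridge at its midpoint and recursively bounding the exit probability on each half, exploiting that both endpoints remain safely inside $\mathcal{C}$.
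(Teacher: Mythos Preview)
Your three–phase decomposition is a reasonable strategy and Phases~(I)--(II) are correct, but Phase~(III) is the heart of the matter and is left as a sketch that hides real difficulties. The bridge you must control has length $\ell-L\sim\ell$ and is pinned at $g_\ell-La$, whose displacement from $(\ell-L)m$ is of order $\ell^\alpha$ with $\alpha>1/2$. This is a \emph{moderate-deviations} conditioning, not a diffusive one, so off-the-shelf Donsker invariance principles for bridges do not apply directly; you would first have to tilt the increment law to the new mean and then prove a functional CLT for the tilted bridge, or invoke a KMT strong coupling and transfer the exit estimate---either route is doable but neither is short, and your ``recursive midpoint splitting'' is not a proof. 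Note also that after diffusive rescaling both endpoints go to infinity, so the correct picture is not ``Brownian bridge between two fixed interior points'' but rather ``centered bridge with $O(\sqrt\ell)$ fluctuations around a deterministic path that stays at distance $\gtrsim\ell^\alpha$ from $\partial\mathcal C$''; the right conclusion is then $q_\ell\to1$, stronger than the uniform lower bound you claim.

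The paper avoids bridges entirely by reversing the order in which the constraints are imposed. It first lets the walk run freely in $\mathcal C$ (positive probability by Lemma~\ref{psc}) for about $d\ell$ steps, with $d$ chosen large; then it iterates a ``ball-shrinking'' step (Garbit's Lemma~\ref{RG}, which is a Donsker argument for the \emph{unconditioned} centered walk, not for a bridge) roughly $\ell^\alpha$ times to bring the endpoint within a \emph{fixed} radius $r_0$ of the drift line. The factor $d$ guarantees that from any point of $B(d\ell m,r_0)$ the bounded increments make it \emph{deterministically impossible} to leave $\mathcal C$ in the next $2\ell$ steps, so the final phase---hitting $g_\ell$ via Theorem~\ref{TLL-R}---needs no cone conditioning at all. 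The $\ell^\alpha$ iterations of Lemma~\ref{RG} cost a factor $\rho^{\ell^\alpha}$, which is exactly the $\exp(-c\ell^\alpha)$ budget. This trades your hard bridge estimate for an elementary iteration, at the price of introducing the auxiliary Lemma~\ref{RG}.
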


In the sequel, we will use the following notation : if $(u_\ell)$ and $%
(v_\ell)$ are two real sequences, we write $u_\ell\preceq v_\ell$ when there
exists a constant $\kappa>0$ such that  
$u_\ell\leq\kappa\, v_\ell$  for all  large enough $\ell$.

\begin{proof}[Proof of Theorem \protect\ref{tll-q}]
Fix a real number $\beta$ such that $\frac12<\alpha<\beta<\frac23$ and set $%
b_{\ell}=[{\ell}^\beta]$. Let $\delta>0$ be such that $B(m,\delta)\subset%
\mathcal{C}$. Set
$
B_{{\ell}}=B({\ell}m,{\ell}\delta).
$

For any ${\ell}\geq1$ we have $B_{\ell}\subset \mathcal{C}$. We are going to
establish that 
\begin{equation}  \label{Q_n}
Q_{\ell}:= {\frac{\mathbb{P}\left[ S_1\in \mathcal{C}, \ldots, S_{\ell} \in 
\mathcal{C}, S_{\ell}=g_{\ell}\right] }{\mathbb{P}\left[ S_1\in \mathcal{C},
\ldots, S_{b_{\ell}} \in \mathcal{C}, S_{b_{\ell}}\in B_{b_{\ell}},
S_{\ell}=g_{\ell}\right] }}\ \to 1.
\end{equation}
By the Cramer-Chernoff large deviations inequality, there exists $%
c=c(\delta)>0$ such that 
\begin{equation*}
\mathbb{P}\left[S_{b_{\ell}}\notin B_{b_{\ell}}\right]=\mathbb{P}\left[%
\left\|S_{b_{\ell}}-b_{\ell}m\right\|\geq b_{\ell}\delta\right]%
\leq\exp(-cb_{\ell})\preceq\exp(-cl^\beta). 
\end{equation*}
By Lemma \ref{lem-marc}, there also exists $c^{\prime }>0$ such that 
\begin{equation*}
\mathbb{P}\left[ S_1\in \mathcal{C}, \ldots, S_{{\ell}} \in \mathcal{C},
S_{\ell}=g_{\ell}\right]\geq \exp(-c^{\prime }{\ell}^\alpha). 
\end{equation*}
Since $\alpha<\beta$, we thus have 
\begin{equation*}
\mathbb{P}\left[S_{b_{\ell}}\notin B_{b_{\ell}}\right]=o\left(\mathbb{P}%
\left[ S_1\in \mathcal{C}, \ldots, S_{{\ell}} \in \mathcal{C},
S_{\ell}=g_{\ell}\right]\right). 
\end{equation*}
This gives 
\begin{equation}  \label{corrun}
\mathbb{P}\left[ S_1\in \mathcal{C}, \ldots, S_{\ell} \in \mathcal{C},
S_{b_{\ell}}\in B_{b_{\ell}}, S_{\ell}=g_{\ell}\right]\sim \mathbb{P}\left[
S_1\in \mathcal{C}, \ldots, S_{{\ell}} \in \mathcal{C}, S_{\ell}=g_{\ell}%
\right],
\end{equation}
and in particular for any large enough ${\ell}$ 
\begin{equation}  \label{corrde}
\mathbb{P}\left[ S_1\in \mathcal{C}, \ldots, S_{b_{\ell}} \in \mathcal{C},
S_{b_{\ell}}\in B_{b_{\ell}}, S_{\ell}=g_{\ell}\right]\geq \frac12 \mathbb{P}%
\left[ S_1\in \mathcal{C}, \ldots, S_{{\ell}} \in \mathcal{C},
S_{\ell}=g_{\ell}\right].
\end{equation}
Now set 
\begin{equation*}
U_{\ell}:= {\frac{ \mathbb{P}\left[ S_1\in \mathcal{C}, \ldots, S_{{\ell}}
\in \mathcal{C}, S_{b_{\ell}}\in B_{b_{\ell}}, S_{\ell}=g_{\ell}\right] }{%
\mathbb{P}\left[ S_1\in \mathcal{C}, \ldots, S_{b_{\ell}} \in \mathcal{C},
S_{b_{\ell}}\in B_{b_{\ell}}, S_{\ell}=g_{\ell}\right] }}\ . 
\end{equation*}
Write $\varepsilon={\rm dist}(m,\mathcal{C}^c)$, and recall that $\varepsilon>0$. So,
when the walk goes out the cone, its distance to the point $km$ is at least $%
k\varepsilon$.

We have, 
\begin{equation*}
0\leq 1-U_{\ell}  \leq {\frac{\displaystyle 
\sum_{k\geq1}  \mathbb{P}\left[S_{b_{\ell}+k}\notin \mathcal{C}\right]  }{%
\mathbb{P}\left[ S_1\in \mathcal{C}, \ldots, S_{b_{\ell}} \in \mathcal{C},
S_{b_{\ell}}\in B_{b_{\ell}}, S_{\ell}=g_{\ell}\right]  }}\ .  
\end{equation*}
Thus by (\ref{corrde}) and our choice of $\varepsilon$, we have for any
sufficiently large ${\ell}$,  
\begin{equation}  \label{uene}
|1-U_{\ell}|\leq 2\,{\frac{ \sum_{k\geq b_{\ell}} \mathbb{P}\left[ \Vert
S_k-km\Vert \geq k\varepsilon \right] }{\mathbb{P}\left[ S_1\in \mathcal{C},
\ldots, S_{{\ell}} \in \mathcal{C}, S_{\ell}=g_{\ell}\right] }}\ .
\end{equation}

We deduce from the large deviations inequality that there exists a constant $%
c^{\prime \prime }=c^{\prime \prime }(\varepsilon)>0$ such that for any $k$, 
\begin{equation*}
\mathbb{P}\left[\|S_k-km\|\geq k\varepsilon\right]\leq\exp(-c^{\prime \prime
}k). 
\end{equation*}
Therefore 
\begin{equation*}
\sum_{k\geq b_{\ell}}  \mathbb{P}\left[ \Vert S_k-km\Vert \geq k\varepsilon %
\right]\preceq\exp(-c^{\prime \prime }b_{\ell})\preceq\exp(-c^{\prime \prime
}{\ell}^\beta).  
\end{equation*}
Since $\alpha<\beta$, Lemma \ref{lem-marc} and (\ref{uene}) implies that $%
\lim U_{\ell}=1.$ Together with (\ref{corrun}) this proves (\ref{Q_n}).
Moreover, the same result holds if we replace $g_{\ell}$ by $%
g_{\ell}+h_{\ell}$ for $\lim {\ell}^{-\alpha}\|g_{\ell}+h_{\ell}-{\ell}m\|=0$. 
To achieve the proof of Theorem \ref{tll-q}, it now suffices to establish
that 
\begin{equation*}
{\frac{\mathbb{P}\left[ S_1\in \mathcal{C}, \ldots, S_{b_{\ell}} \in 
\mathcal{C}, S_{b_{\ell}}\in B_{b_{\ell}}, S_{\ell}=g_{\ell}+h_{\ell}\right] 
}{\mathbb{P}\left[ S_1\in \mathcal{C}, \ldots, S_{b_{\ell}} \in \mathcal{C},
S_{b_{\ell}}\in B_{b_{\ell}}, S_{\ell}=g_{\ell}\right]  }}\ \to 1. 
\end{equation*}
Since the increments of the random walk are independent and stationnary, we have 
\begin{multline*}
\mathbb{P}\left[ S_1\in \mathcal{C}, \ldots, S_{b_{\ell}} \in \mathcal{C},
S_{b_{\ell}}\in B_{b_{\ell}}, S_{\ell}=g_{\ell}\right] \\
= \sum_{x\in B_{b_{\ell}}\cap G} \mathbb{P}\left[S_{{\ell}%
-b_{\ell}}=g_{\ell}-x\right]\times \mathbb{P}\left[S_1\in \mathcal{C},
\ldots, S_{b_{\ell}} \in \mathcal{C}, S_{b_{\ell}}=x\right].
\end{multline*}
This leads to the theorem since by (\ref{tll-equ}) we have 
\begin{equation*}
\mathbb{P}\left[S_{{\ell}-b_{\ell}}=g_{\ell}-x\right]\sim\mathbb{P}\left[S_{{%
\ell}-b_{\ell}}=g_{\ell}+h_{\ell}-x\right], 
\end{equation*}
uniformly in $x\in B_{b_{\ell}}$. (Indeed $\|g_{\ell}-x-({\ell}%
-b_{\ell})m\|\preceq {\ell}^\beta\delta$, uniformly in $x\in B_{b_{\ell}}$.)
\end{proof}

\subsection{Renewal theorem}

Assume now that the random variables $X_{\ell}$ take values in a discrete
subgroup of $\mathbb{R}^n$, and denote by $G$ the group generated by their
law $\mu$. We assume that $G$ linearly generates the whole space $\mathbb{R}%
^n$. Denote $U$ the associated renewal measure defined by $U:=\sum_{j\geq0}
\mu^{*j}$. Equivalently, we have for any $g\in G$, 
\begin{equation*}
U(g)=\sum_{j=0}^{+\infty} \mathbb{P}[S_j=g].
\end{equation*}

Let us first insist  that there is no hypothesis of aperiodicity in the following statement~; this is due to the fact that the quantity  $U(g)$ represents the expected number of visits of  $g$ by the whole random walk and not only at a precise time, like  the local limit theorem does.  In particular, the law of the variables $X_l$ may be supported by a proper coset of $G$ with  no consequences on the behavior of $U(g)$ as $g\to \infty$.

We assume that $m:=\mathbb{E}[X_{\ell}]$ is nonzero. The renewal theorem
tells us that, when $g$ tends to infinity in the direction $m$, we have 
\begin{equation*}
U(g)\sim\frac{v(G)}{\sigma}(2\pi)^{-(n-1)/2} \|m\|^{(n-3)/2}\|g\|^{-(n-1)/2}
\end{equation*}
where $\sigma^2$ is the determinant of the covariance matrix associated to
the orthogonal projection of $X_{\ell}$ on the hyperplan orthogonal to $m$.
More precisely, we have the following theorem.

Let $(e_1,e_2,\ldots, e_{n-1})$ be an orthonormal basis of the hyperplan $%
m^\perp$. If $x\in\mathbb{R}^n$, denote by $x^{\prime }$ its orthogonal
projection on $m^\perp$ expressed in this basis (here $x^{\prime }$ is
regarded as a row vector). Finally let $B$ be the covariance matrix of the
random vector $X^{\prime }_{\ell}$.

Recall that $\mathcal{N}_B$ is the $(n-1)$-dimensional Gaussian density
given by 
\begin{equation*}
\mathcal{N}_B(x^{\prime})=(2\pi)^{-(n-1)/2} (\det B)^{-1/2}\exp\left(-\frac12\,
x^{\prime}\cdot B^{-1}\cdot{}^tx^{\prime }\right). 
\end{equation*}

\begin{theorem}
\label{ren-CW} We assume the random variables $X_{\ell}$ have an exponential
moment. Let $(a_{\ell})$ be a sequence of real numbers such that $\lim
a_{\ell} {\ell}^{-2/3}=0$. Then, when $\ell$ goes to infinity, we have 
\begin{equation*}
U(g)\sim \frac{v(G)}{\|m\|} {\ell}^{-(n-1)/2}\,\mathcal{N}_{B}\left(\frac1{%
\sqrt {\ell}}g^{\prime }\right)
\end{equation*}
uniformly in $g\in G$ such that $\|g-{\ell}m\|\leq a_{\ell}$.
\end{theorem}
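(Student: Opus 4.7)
The plan is to expand $U(g) = \sum_{j \geq 0} \mathbb{P}[S_j = g]$, isolate the range of $j$ where the summand is non-negligible, apply the local limit theorem for large deviations (Theorem \ref{TLL-R}) inside that range, and recognize the resulting sum as a Riemann sum converging to a Gaussian integral. Since $g \approx \ell m$, the term $\mathbb{P}[S_j = g]$ is of significant size only when $jm \approx g$, i.e.\ when $j$ lies within distance of order $\sqrt{\ell}$ from $\ell$. I would therefore fix an exponent $\beta \in (1/2, 2/3)$, cut the sum at $|j-\ell| \leq \ell^\beta$, and handle the tail by combining Cramer--Chernoff large deviations with the Gaussian decay supplied by the LLT: in the moderate regime one obtains a bound of order $j^{-n/2}\exp(-c\,\ell^{2\beta-1})$, while for $|j-\ell|$ of order $\ell$ one gets true exponential decay in $j$, so the total tail is negligible compared to the target order $\ell^{-(n-1)/2}$.

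Inside the window $|j-\ell|\leq \ell^\beta$, I would write $j = \ell + k$ and observe that $\|g - jm\| \leq a_\ell + |k|\|m\| = o(\ell^{2/3}) = o(j^{2/3})$, so Theorem \ref{TLL-R} applies uniformly and yields
\begin{equation*}
\mathbb{P}[S_j = g] \sim v(G)\, j^{-n/2}\, \mathcal{N}_\Gamma\!\left(\frac{g - jm}{\sqrt{j}}\right),
\end{equation*}
where $\Gamma$ is the full covariance matrix of $X_\ell$ on $\mathbb{R}^n$. Replacing $j^{-n/2}$ and $\sqrt{j}$ by $\ell^{-n/2}$ and $\sqrt{\ell}$ up to negligible error (since $j\sim\ell$), and viewing the resulting sum in $k$ as a Riemann sum with mesh $1/\sqrt{\ell}$ in the variable $s = k/\sqrt{\ell}$, I would obtain
\begin{equation*}
U(g) \sim v(G)\, \ell^{-n/2}\, \sqrt{\ell} \int_{-\infty}^{+\infty} \mathcal{N}_\Gamma\!\left(\frac{g-\ell m}{\sqrt{\ell}} - s m\right) ds.
\end{equation*}

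The last step is to evaluate this line integral: integrating a centered $n$-dimensional Gaussian density with covariance $\Gamma$ along a line in direction $m$ equals $\|m\|^{-1}$ times the marginal density of the orthogonal projection onto $m^\perp$ evaluated at the projection of the base point, and this projected density is exactly $\mathcal{N}_B$. Since $\ell m$ projects to zero on $m^\perp$, the base point projects to $g'$, producing the asymptotic $U(g) \sim (v(G)/\|m\|)\,\ell^{-(n-1)/2}\,\mathcal{N}_B(g'/\sqrt{\ell})$ as claimed. The main obstacle I anticipate is the absence of any aperiodicity hypothesis in Theorem \ref{ren-CW} (in contrast with Theorem \ref{TLL-R}): if $\mu$ is supported on a coset $b + G'$ of a proper subgroup $G' \subset G$, then only $j$ in a single residue class modulo $[G:G']$ contribute to $U(g)$, and one must apply the LLT to the aperiodic walk $(S_j-jb)$ on $G'$; the resulting terms carry an extra factor $v(G') = [G:G']\,v(G)$ which exactly compensates the thinning of the Riemann sum, so that the limiting integral is unchanged.
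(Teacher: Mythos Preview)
The paper does not actually prove Theorem \ref{ren-CW}: immediately after the statement it says that the absolutely continuous case is due to Carlsson and Wainger and that ``a detailed proof of this version is given in \cite{note_LLP}''. So there is no in-paper proof to compare against; you have supplied what the paper omits.

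Your outline is the standard route and matches what one expects to find in the cited companion note: localize the sum $\sum_j \mathbb{P}[S_j=g]$ to a window $|j-\ell|\leq\ell^\beta$ with $\beta\in(1/2,2/3)$, apply the large-deviation local limit theorem uniformly there, interpret the sum over $k=j-\ell$ as a Riemann sum with mesh $\ell^{-1/2}$, and reduce the resulting line integral of $\mathcal{N}_\Gamma$ along direction $m$ to $\|m\|^{-1}\mathcal{N}_B(g'/\sqrt\ell)$ by marginalization. Your handling of the aperiodicity issue is exactly the right observation (and is the point the paper itself flags just before the statement): in the periodic case only one residue class of $j$ contributes, but the LLT on the subgroup $G'$ carries the factor $v(G')=[G:G']\,v(G)$, and this compensates the thinned Riemann sum. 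One small caution: your tail estimate needs a little more care in the intermediate range $\ell^\beta\ll|j-\ell|\ll\ell$, where $\|g-jm\|$ is no longer $o(j^{2/3})$ and Theorem \ref{TLL-R} in the stated form does not apply directly; there one should invoke the sub-Gaussian Cramer bound $\mathbb{P}[\|S_j-jm\|\geq r\|m\|]\leq\exp(-cr^2/j)$ (valid for $r\leq \epsilon j$ under the exponential-moment assumption) rather than the LLT itself, which yields a total tail of order $\sqrt{\ell}\,\exp(-c\ell^{2\beta-1})=o(\ell^{-(n-1)/2})$ as needed.
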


This theorem has been proved by H. Carlsson and S. Wainger in the case of
absolutely continuous distribution (\cite{CW}). We did not find in the
literature the lattice distribution version we state here. A detailed proof
of this version is given in \cite{note_LLP}.

Under the hypotheses of Theorem \ref{ren-CW}, we see in particular that if $%
(g_{\ell}), (h_{\ell})$ are two sequences in $G$ such that $\lim {\ell}%
^{-2/3}\|g_{\ell}-{\ell}m\|=0$ and $\lim {\ell}^{-1/2}\|h_{\ell}\|=0$, then 
\begin{equation}  \label{ren-equ}
U(S_{\ell}=g_{\ell}+ h_{\ell})\sim U(S_{\ell}=g_{\ell} ).
\end{equation}

\subsection{A quotient renewal theorem for the random walk restricted to a
cone}

\begin{theorem}
\label{ren-q} Assume the random variables $X_{\ell}$ are almost surely
bounded. 
Let $(g_{\ell}), (h_{\ell})$ two sequences in $G$ such that there exists $%
\alpha<2/3$ with $\lim {\ell}^{-\alpha}\|g_{\ell}-{\ell}m\|=0$ and $\lim {%
\ell}^{-1/2}\|h_{\ell}\|=0$. Then, when ${\ell}$ tends to infinity, we have 
\begin{equation*}
\sum_{j\geq1}\mathbb{P}\left[S_1\in\mathcal{C},\ldots,S_j\in\mathcal{C}%
,S_j=g_{\ell}+h_{\ell}\right]\sim \sum_{j\geq1}\mathbb{P}\left[S_1\in%
\mathcal{C},\ldots,S_j\in\mathcal{C},S_j=g_{\ell}\right]. 
\end{equation*}
\end{theorem}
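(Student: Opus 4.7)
The argument runs along the same lines as the proof of Theorem~\ref{tll-q}. Pick $\beta$ with $\alpha<\beta<2/3$, set $b_\ell=[\ell^\beta]$, and fix $\delta>0$ small enough that $B(m,\delta)\subset\mathcal{C}_0$; write $B_{b_\ell}=B(b_\ell m,b_\ell\delta)$. Since the increments are bounded, $\mathbb{P}[S_j=g_\ell]=0$ whenever $j<b_\ell$ and $\ell$ is large, so the series reduces to $\sum_{j\geq b_\ell}$. The backbone of the argument is the classical fact that the free renewal measure $U(y)=\sum_{j\geq 0}\mathbb{P}[S_j=y]$ is uniformly bounded in $y$ (the walk is transient because $m\neq 0$).

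The main decomposition splits according to whether $S_{b_\ell}\in B_{b_\ell}$ or not. The ``bad'' contribution is controlled by the Cram\'er--Chernoff large deviations inequality and the boundedness of $U$:
\begin{equation*}
\sum_{j\geq b_\ell}\mathbb{P}[S_{b_\ell}\notin B_{b_\ell},S_j=g_\ell]\leq (\sup_y U(y))\,\mathbb{P}[\|S_{b_\ell}-b_\ell m\|\geq b_\ell\delta]\preceq \exp(-cb_\ell).
\end{equation*}
For the ``good'' part, apply the Markov property at time $b_\ell$. Starting from $x\in B_{b_\ell}$ the walk is at distance $\gtrsim b_\ell$ from $\mathcal{C}^c$, so the strong Markov property at the first exit time $\tau$ of the future walk, combined with $\sup_y U(y)<\infty$, shows that dropping the constraint ``stays in $\mathcal{C}$ after time $b_\ell$'' costs only $O(\exp(-cb_\ell))$, uniformly in $x\in B_{b_\ell}$. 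Using $\sum_{j\geq b_\ell}\mathbb{P}[\tilde S_{j-b_\ell}=g_\ell-x]=U(g_\ell-x)$ we arrive at
\begin{equation*}
\sum_{j\geq 1}\mathbb{P}[S_1,\ldots,S_j\in\mathcal{C},S_j=g_\ell]=\sum_{x\in B_{b_\ell}}\mathbb{P}[S_1,\ldots,S_{b_\ell}\in\mathcal{C},S_{b_\ell}=x]\,U(g_\ell-x)+O(\exp(-cb_\ell)),
\end{equation*}
with the analogous identity for $g_\ell+h_\ell$.

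The comparison is then straightforward. For $x\in B_{b_\ell}$ one has $\|(g_\ell-x)-(\ell-b_\ell)m\|=O(\ell^\beta)=o(\ell^{2/3})$ and $\|h_\ell\|=o(\ell^{1/2})$, so the free renewal equivalent (\ref{ren-equ}) yields $U(g_\ell+h_\ell-x)\sim U(g_\ell-x)$ \emph{uniformly} in $x\in B_{b_\ell}$ (this uniformity being inherited from the uniform formulation of Theorem~\ref{ren-CW}). Applying Lemma~\ref{lem-marc} with some $\alpha_1\in(\max(\alpha,1/2),\beta)$ (possible since $\alpha<2/3$) bounds the left-hand side from below by $\exp(-c\ell^{\alpha_1})$, so the $O(\exp(-c\ell^\beta))$ remainders are negligible and the theorem follows by taking the ratio.

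The main obstacle is the removal of the cone-constraint after time $b_\ell$: one must argue that, starting deep inside $\mathcal{C}$ at distance $\gtrsim b_\ell$ from the boundary, paths that ever exit $\mathcal{C}$ and end at the prescribed target contribute a negligible part of the renewal sum. The strong Markov property at the first exit, combined with the crude uniform bound on $U$ (which does not require sharp asymptotics in ``bad'' directions), gives the cleanest way to conclude. The choice $\beta\in(\alpha,2/3)$ matches both constraints exactly: $\beta>\alpha$ so that the exponential error beats the lower bound of Lemma~\ref{lem-marc}, and $\beta<2/3$ so that $(g_\ell-x)-(\ell-b_\ell)m$ remains in the validity regime of (\ref{ren-equ}).
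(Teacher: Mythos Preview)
Your proposal is correct and follows essentially the same route as the paper: fix $\beta\in(\max(\alpha,1/2),2/3)$ (you should state $\beta>1/2$ explicitly so that the interval for $\alpha_1$ is nonempty), split at time $b_\ell=[\ell^\beta]$ according to whether $S_{b_\ell}\in B_{b_\ell}$, drop the cone constraint after $b_\ell$ using large deviations plus the uniform bound on $U$, and compare the resulting free renewal sums via~(\ref{ren-equ}), with Lemma~\ref{lem-marc} providing the lower bound that kills the $O(\exp(-c\ell^\beta))$ remainders. Your bookkeeping is in fact slightly cleaner than the paper's (swapping the $j$- and $y$-sums to reach $U$ directly, and using the strong Markov property at the first exit time rather than a union bound over all exit times), but the decomposition and the ingredients are identical.
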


\begin{proof}
Fix a real number $\beta$ such that $\frac12<\alpha<\beta<\frac23$ and set $%
b_{\ell}=[{\ell}^\beta]$. Let $\delta>0$ be such that $B(m,\delta)\subset%
\mathcal{C}$ and set 
$
B_{{\ell}}=B({\ell}m,{\ell}\delta).
$
For any ${\ell}\geq1$, we have $B_{\ell}\subset \mathcal{C}$.

We are going to prove that 
\begin{equation}  \label{R_n}
\frac{\sum_{j\geq1}\mathbb{P}\left[ S_1\in \mathcal{C}, \ldots, S_{b_{\ell}}
\in \mathcal{C}, S_{b_{\ell}}\in B_{b_{\ell}}, S_j=g_{\ell}\right] }{{%
\sum_{j\geq1}\mathbb{P}\left[ S_1\in \mathcal{C}, \ldots, S_j \in \mathcal{C}%
, S_j=g_{\ell}\right] }}\ \to 1\quad\text{when ${\ell}\to+\infty$.}
\end{equation}

Observe first that there exists $c_1>0$ such that $\mathbb{P}[S_j=g_{\ell}]=0
$ if $j<c_1{\ell}$ since the support of $\mu$ is bounded.

We prove first that 
\begin{equation}  \label{T_n}
T_{\ell}:= {\frac{\sum_{j\geq c_1{\ell}}\mathbb{P}\left[ S_1\in \mathcal{C},
\ldots, S_{j} \in \mathcal{C}, S_{b_{\ell}}\in B_{b_{\ell}}, S_j=g_{\ell}%
\right] }{\sum_{j\geq c_1{\ell}}\mathbb{P}\left[ S_1\in \mathcal{C}, \ldots,
S_j \in \mathcal{C}, S_j=g_{\ell}\right] }}\ \to 1\quad\text{when ${\ell}%
\to+\infty$}.
\end{equation}
We have 
\begin{equation*}
0\leq1- T_{\ell}\leq {\frac{\sum_{j\geq c_1{\ell}}\mathbb{P}\left[ S_1\in 
\mathcal{C}, \ldots, S_{j} \in \mathcal{C}, S_{b_{\ell}}\notin B_{b_{\ell}},
S_j=g_{\ell}\right] }{\sum_{j\geq c_1{\ell}}\mathbb{P}\left[ S_1\in \mathcal{%
C}, \ldots, S_j \in \mathcal{C}, S_j=g_{\ell}\right].  }}
\end{equation*}
Write $N_{\ell}$ and $D_{\ell}$ for the numerator and the denominator of the
previous fraction. We have 
\begin{equation*}
D_{\ell}\geq \mathbb{P}\left[ S_1\in \mathcal{C}, \ldots, S_{\ell} \in 
\mathcal{C}, S_{\ell}=g_{\ell}\right]. 
\end{equation*}
By Lemma \ref{lem-marc}, there exists $c^{\prime }>0$ such that $%
D_{\ell}\geq \exp(-c^{\prime }{\ell}^\alpha). $ Since the support of $\mu$
is bounded, there exists $c_2>0$ such that $|S_{b_{\ell}}|\leq c_2 {\ell}%
^\beta$. This gives 
\begin{multline*}
N_{\ell} \leq \sum_{j\geq c_1{\ell}}\mathbb{P}\left[ S_{b_{\ell}}\notin
B_{b_{\ell}}, S_j=g_{\ell}\right] \leq \sum_{j\geq c_1{\ell}}\mathbb{P}\left[
S_{b_{\ell}}\notin B_{b_{\ell}}\right]\max_{|x|\leq c_2 {\ell}^\beta} 
\mathbb{P}[x+S_{j-b_{\ell}}=g_{\ell}] \\
\leq \mathbb{P}\left[ S_{b_{\ell}}\notin B_{b_{\ell}}\right] \sum_{j\geq c_1{%
\ell}}\sum_{|x|\leq c_2 {\ell}^\beta} \mathbb{P}[x+S_{j-b_{\ell}}=g_{\ell}]%
\leq \mathbb{P}\left[ S_{b_{\ell}}\notin B_{b_{\ell}}\right]\sum_{|x|\leq
c_2 {\ell}^\beta} U(g_{\ell}-x).
\end{multline*}
The cardinality of $B(0,c_2{\ell}^\beta)\cap G$ is $O({\ell}^{n\beta})$ and
the function $U$ is uniformly bounded on $G$. This gives 
\begin{equation*}
N_{\ell}\preceq \mathbb{P}\left[ S_{b_{\ell}}\notin B_{b_{\ell}}\right]
\times {\ell}^{n\beta}. 
\end{equation*}
By the large deviations inequality, there exists $c_3>0$ such that $\mathbb{P%
}\left[ S_{b_{\ell}}\notin B_{b_{\ell}}\right]\leq\exp(-c_3b_{\ell})$.
Finally the polynomial term is absorbed by the exponential term and we
obtain 
\begin{equation*}
N_{\ell}\preceq \exp(-c_4{\ell}^\beta). 
\end{equation*}
The estimates we have obtained on $D_{\ell}$ et $N_{\ell}$ clearly yields (%
\ref{T_n}).

Next, we prove that 
\begin{equation}  \label{T'_n}
T^{\prime }_{\ell}:= {\frac{\sum_{j\geq c_1{\ell}}\mathbb{P}\left[ S_1\in 
\mathcal{C}, \ldots, S_{j} \in \mathcal{C}, S_{b_{\ell}}\in B_{b_{\ell}},
S_j=g_{\ell}\right] }{\sum_{j\geq c_1{\ell}}\mathbb{P}\left[ S_1\in \mathcal{%
C}, \ldots, S_{b_{\ell}} \in \mathcal{C}, S_{b_{\ell}}\in B_{b_{\ell}},
S_j=g_{\ell}\right]}}\ \to 1\quad\text{when ${\ell}\to+\infty$}.
\end{equation}
We have 
\begin{equation*}
0\leq1- T^{\prime }_{\ell}\leq {\frac{\sum_{j\geq c_1{\ell}}\mathbb{P}\left[
S_1\in \mathcal{C}, \ldots, S_{b_{\ell}} \in \mathcal{C}, S_{b_{\ell}}\in
B_{b_{\ell}}, \exists k\in\{1,\ldots,j-b_{\ell}\} \,S_{b_{\ell}+k}\notin%
\mathcal{C }, S_j=g_{\ell}\right] }{\sum_{j\geq c_1{\ell}}\mathbb{P}\left[
S_1\in \mathcal{C}, \ldots, S_{b_{\ell}} \in \mathcal{C}, S_{b_{\ell}}\in
B_{b_{\ell}}, S_j=g_{\ell}\right]  }}\ .
\end{equation*}
Write $N^{\prime }_{\ell}$ and $D^{\prime }_{\ell}$ for the numerator and the
denominator of the previous fraction.  
Let $\varepsilon>0$ be as in the proof of Theorem \ref{tll-q}.
Since  $[S_k\notin \mathcal{C}] \subset [\left\|S_{k}- km\right\|\geq
k\varepsilon]$, one gets
\begin{align*}
N^{\prime }_{\ell}&\leq\sum_{j\geq c_1{\ell}}\mathbb{P}\left[ \exists
k\in\{1,\ldots,j-b_{\ell}\} \,S_{b_{\ell}+k}\notin\mathcal{C }, S_j=g_{\ell}%
\right] \\
&\leq \sum_{k\geq1}\ \sum_{j\geq \max(c_1{\ell},k+b_{\ell})}\mathbb{P}\left[
S_{b_{\ell}+k}\notin\mathcal{C }, S_j=g_{\ell}\right] \\
&\leq \sum_{k\geq1}\ \sum_{j\geq k+b_{\ell}}\ \sum_{y\notin \mathcal{C}} 
\mathbb{P}\left[S_{b_{\ell}+k}=y\right]\mathbb{P}\left[S_{j-b_{\ell}-k}=g_{%
\ell}-y\right] \\
&\leq \sum_{k\geq1}\ \sum_{y\notin \mathcal{C}} \mathbb{P}\left[%
S_{b_{\ell}+k}=y\right]\ U(g_{\ell}-y) \\
&\leq \sum_{k\geq1}\ \sum_{y\notin \mathcal{C}} \mathbb{P}\left[%
S_{b_{\ell}+k}=y\right]\ \max_{g\in G}U(g) \\
&\leq\sum_{k\geq b_{\ell}}\mathbb{P}\left[S_k\notin \mathcal{C }\right]\
\max_{g\in G}U(g) \\
& \leq\sum_{k\geq b_{\ell}}\mathbb{P}\left[\|S_k-km\|\geq k\varepsilon %
\right]\ \max_{g\in G}U(g).
\end{align*}

Using the large deviations estimate, we obtain as in the proof of Theorem %
\ref{tll-q}   $$N^{\prime }_{\ell}\preceq \exp(-c{\ell}^\beta).$$ 

Let us now look at the denominator $D'_\ell$ ; one gets
\begin{equation*}
D^{\prime }_{\ell}\geq\sum_{j\geq c_1{\ell}}\mathbb{P}\left[ S_1\in \mathcal{%
C}, \ldots, S_j \in \mathcal{C}, S_{b_{\ell}}\in B_{b_{\ell}}, S_j=g_{\ell}%
\right]. 
\end{equation*}
Since $T_{\ell}\to1$, we obtain for large enough ${\ell}$,
\begin{equation*}
D^{\prime }_{\ell}\geq\frac12\sum_{j\geq c_1{\ell}}\mathbb{P}\left[ S_1\in 
\mathcal{C}, \ldots, S_j \in \mathcal{C}, S_j=g_{\ell}\right]=D_{\ell}. 
\end{equation*}
So  $D^{\prime }_{\ell}\geq\frac12 \exp(-c^{\prime }{\ell}^\alpha)$
which leads to  (\ref{T'_n}) ; the convergence (\ref{R_n}) follows, combining  (\ref{T_n})
and (\ref{T'_n}). 

The limit (\ref{R_n}) also holds when we replace $g_{\ell}$
by $g_{\ell} +h_{\ell}$. Therefore, to prove the theorem, it suffices to
check that 
\begin{equation*}
{\frac{\sum_{j\geq c_1{\ell}}\mathbb{P}\left[ S_1\in \mathcal{C}, \ldots,
S_{b_{\ell}} \in \mathcal{C}, S_{b_{\ell}}\in B_{b_{\ell}},
S_j=g_{\ell}+h_{\ell}\right] }{\sum_{j\geq c_1{\ell}}\mathbb{P}\left[ S_1\in 
\mathcal{C}, \ldots, S_{b_{\ell}} \in \mathcal{C}, S_{b_{\ell}}\in
B_{b_{\ell}}, S_j=g_{\ell}\right]  }}\ \to 1. 
\end{equation*}
We write the denominator of this fraction as
\begin{equation*}
\sum_{j\geq c_1{\ell}}\ \sum_{y\in B_{b_{\ell}}}\mathbb{P}\left[S_1\in 
\mathcal{C}, \ldots, S_{b_{\ell}} \in \mathcal{C}, S_{b_{\ell}}=y\right]%
\times \mathbb{P}\left[S_{j-b_{\ell}}=g_{\ell}-y\right], 
\end{equation*}
which is equal to 
\begin{equation*}
\sum_{y\in B_{b_{\ell}}}\mathbb{P}\left[S_1\in \mathcal{C}, \ldots,
S_{b_{\ell}} \in \mathcal{C}, S_{b_{\ell}}=y\right]\times U(g_{\ell}-y). 
\end{equation*}
This is sufficient to  conclude since, by(\ref{ren-equ}), we have 
$
U(g_{\ell}+h_{\ell}-y)\sim U(g_{\ell}-y), 
$
uniformly in $y\in B_{b_{\ell}}$.
\end{proof}

\section{Crystals and random walks}

\label{Sec_crystl_RW}

\subsection{Brief review on crystals}

\label{subsec_crys}We now recall some basics on Kashiwara crystals and
quantum groups.\ For a complete review, we refer to \cite{HK} and \cite%
{Kashi}.\ The quantum group $U_{q}(\mathfrak{g)}$ is a $q$-deformation of
the enveloping Lie algebra $U(\mathfrak{g}).$ To each dominant weight $%
\lambda\in P_{+}$ corresponds a unique (up to isomorphism) irreducible $%
U_{q}(\mathfrak{g})$-module $V_{q}(\lambda).\;$The representation theory of
the finite dimensional $U_{q}(\mathfrak{g})$-modules is essentially parallel
to that of $U(\mathfrak{g}).\;$In particular, any tensor product $%
V_{q}(\lambda)\otimes V_{q}(\mu)$ decomposes into irreducible components.\
The outer multiplicities so obtained are the same as those appearing in the
decomposition of $V(\lambda)\otimes V(\mu).$ Similarly, there exists a
relevant notion of weight subspaces in $V_{q}(\lambda)$ and for any $%
\beta\in P,$ one gets $\dim V_{q}(\lambda)_{\beta}=\dim
V(\lambda)_{\beta}=K_{\lambda,\beta}.$

To each irreducible module $V_{q}(\lambda)$ is associated its Kashiwara
crystal $B(\lambda).$ Formally $B(\lambda):=L(\lambda)/qL(\lambda)$ where $%
L(\lambda)$ is a particular lattice in $V_{q}(\lambda)$ over the ring 
\begin{equation*}
A(q):=\{F\in\mathbb{C}(q)\text{ without pole at }q=0\}.
\end{equation*}
It was proved by Kashiwara that $B(\lambda)$ has the structure of a colored
and oriented graph. This graph encodes many informations on the
representation $V_{q}(\lambda)$ (and thus also on $V(\lambda)$). In
particular, the crystal $B(\lambda)$ contains $\dim V_{q}(\lambda)=\dim
V(\lambda)$ vertices. Its arrows are labelled by the simple roots $%
\alpha_{i}\in\Delta_{+}$.\ The graph structure is obtained from the
Kashiwara operators $\tilde{f}_{i}$ and $\tilde{e}_{i}$, $i\in I$, which are
renormalizations of the action of the Chevalley generators $e_{i},f_{i}$ of $%
U_{q}(\mathfrak{g)}$.\ More precisely, we have an arrow $a\overset{i}{%
\rightarrow}b$ when $b=\tilde{f}_{i}(a)$ or equivalently $a=\tilde{e}%
_{i}(b). $ When there is no arrow $\overset{i}{\rightarrow}$ starting from $a$ (resp.
ending at $b$), we write $\tilde{f}_{i}(a)=0$ (resp. $\tilde{e}%
_{i}(b)=0$).

The notion of crystal can be extended to a category $\mathcal{O}_{\mathrm{int%
}}$ of $U_{q}(\mathfrak{g})$ modules containing the irreducible modules $%
V_{q}(\lambda)$ and stable by tensorization. The crystal associated to the
module $M\in\mathcal{O}_{\mathrm{int}}$ is unique up to isomorphism: given $%
B $ and $B^{\prime}$ two crystals associated to $M,$ there exists a
bijection $\psi:B\rightarrow B^{\prime}$ which commutes with the Kashiwara
operators $\tilde{f}_{i}$ and $\tilde{e}_{i}.$

Given any $b\in B$ and $i\in I$, we set $\varepsilon_{i}(b)=\max\{k\mid 
\tilde{e}_{i}^{k}(b)\neq0\}$ and $\varphi_{i}(b)=\max\{k\mid\tilde{f}%
_{i}^{k}(b)\neq0\}.$ The weight of the vertex $b\in B$ is then defined by 
\begin{equation}
\mathrm{wt}(b)=\sum_{i\in I}\mathrm{wt}(b)_{i}\,\omega_{i}\in P\text{ where }%
\mathrm{wt}(b)_{i}=\varphi_{i}(b)-\varepsilon_{i}(b)  \label{defWeightCrys}
\end{equation}
One can then prove that $\mathrm{wt}(\tilde{f}_{i}(b))=\mathrm{wt}%
(b)-\alpha_{i}$ for any $i\in I$ and any $b\in B$ such that $\tilde{f}%
_{i}(b)\neq0.\;$For any $\beta\in P,$ the dimension $\dim M_{\beta}$  of the
weight space $\beta$ in $M$  is the cardinality of the set $B_{\beta}$ of
vertices of weight $\beta$ in  the crystal $B$ associated to $M$. A vertex $%
b\in B$ is said to be of highest weight when $\varepsilon_{i}(b)=0$ for any $%
i\in I.\;$In that case, we have immediately that $\mathrm{wt}(b)\in P_{+}.$
Write $\mathrm{HW}(B)$ for the set of highest weight vertices in $B$.\ The
elements of $\mathrm{HW}(B)$ are in one-to-one correspondence with the
connected components of the crystal $B$. In particular the crystal $%
B(\lambda)$ is connected with a unique highest weight vertex of weight $%
\lambda$. For any $b\in B,$ we denote by $B(b)$ the connected component of $%
B $ containing $b$ and by $\mathrm{hw}(b)$ the highest weight vertex of $B(b)
$.

The two following properties of crystals will be essential for our purpose.

\begin{theorem}
\label{TH_Ka}Consider $M\in \mathcal{O}_{\mathrm{int}}$ and $B$ its crystal
graph.

\begin{enumerate}
\item The decomposition of the $U_{q}(\mathfrak{g})$-module $M$ in
irreducible components is given be the decomposition of $B$ in connected
components. More precisely, we have 
\begin{equation*}
M\simeq \bigoplus_{b\in \mathrm{HW}(B)}V_{q}(\mathrm{wt}(b)).
\end{equation*}

\item Consider $\lambda ,\mu \in P_{+}$ and $B(\lambda ),B(\mu )$ the
crystals associated to $V_{q}(\lambda )$ and $V_{q}(\mu ).$ The crystal
associated to $V_{q}(\lambda )\otimes V_{q}(\mu )$ is the crystal $B(\lambda
)\otimes B(\mu )$ whose set of vertices is the direct product of the sets of
vertices of $B(\lambda )$ and $B(\mu )$ and whose crystal structure is given
by the following rules 
\begin{equation}
\tilde{e}_{i}(u\otimes v)=\left\{ 
\begin{array}{l}
u\otimes \tilde{e}_{i}(v)\text{ if }\varepsilon _{i}(v)>\varphi _{i}(u) \\ 
\tilde{e}_{i}(u)\otimes v\text{ if }\varepsilon _{i}(v)\leq \varphi _{i}(u)%
\end{array}
\right. \text{ and }\tilde{f}_{i}(u\otimes v)=\left\{ 
\begin{array}{l}
\tilde{f}_{i}(u)\otimes v\text{ if }\varphi _{i}(u)>\varepsilon _{i}(v) \\ 
u\otimes \tilde{f}_{i}(v)\text{ if }\varphi _{i}(u)\leq \varepsilon _{i}(v)%
\end{array}
\right. .  \label{tens_crys}
\end{equation}
We thus have 
\begin{equation*}
\left\{ 
\begin{array}{l}
\varphi _{i}(u\otimes v)=\varphi _{i}(v)+\max \{\varphi _{i}(u)-\varepsilon
_{i}(v),0\}, \\ 
\varepsilon _{i}(u\otimes v)=\varepsilon _{i}(u)+\max \{\varepsilon
_{i}(v)-\varphi _{i}(u),0\}.%
\end{array}
\right.
\end{equation*}
In particular $u\otimes v\in \mathrm{HW}(B(\lambda )\otimes B(\mu ))$ if and
only if $u\in \mathrm{HW}(B(\lambda ))$ and $\varepsilon _{i}(v)\leq \varphi
_{i}(v)$ for any $i\in I.$
\end{enumerate}
\end{theorem}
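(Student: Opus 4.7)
The plan is to treat Theorem \ref{TH_Ka} as a packaging of two of Kashiwara's foundational results on crystal bases, so the strategy is not to reconstruct the whole theory but to indicate how each part follows once the existence of crystal bases for modules in $\mathcal{O}_{\mathrm{int}}$ is in hand.

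For part (1), I would first invoke Kashiwara's existence and uniqueness theorem for crystal bases of modules in $\mathcal{O}_{\mathrm{int}}$, which guarantees that every such $M$ admits a crystal basis $(L,B)$ that behaves additively under direct sums. Since $\mathcal{O}_{\mathrm{int}}$ is semisimple, write $M\simeq\bigoplus_j V_q(\lambda_j)$; then $(L,B)$ is isomorphic to $\bigl(\bigoplus_j L(\lambda_j),\bigsqcup_j B(\lambda_j)\bigr)$. Each $B(\lambda_j)$ is known to be connected with a unique highest-weight vertex of weight $\lambda_j$, so the connected components of $B$ are in bijection with the irreducible summands of $M$, and picking the highest-weight vertex $b$ of a component recovers the label $\mathrm{wt}(b)=\lambda_j$. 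Uniqueness of the crystal up to isomorphism then shows that \emph{any} realisation of $B$ yields the same decomposition $M\simeq\bigoplus_{b\in\mathrm{HW}(B)}V_q(\mathrm{wt}(b))$.

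For part (2), I would appeal to Kashiwara's tensor product theorem: if $(L_1,B_1)$ and $(L_2,B_2)$ are crystal bases of $M_1,M_2\in\mathcal{O}_{\mathrm{int}}$, then $(L_1\otimes_{A(q)}L_2,B_1\otimes B_2)$ is a crystal basis of $M_1\otimes M_2$. To derive the explicit combinatorial rule \eqref{tens_crys}, one computes the coproducts $\Delta(e_i),\Delta(f_i)$ in $U_q(\mathfrak{g})$, restricts to the $U_q(\mathfrak{sl}_2)_i$-subalgebra, and decomposes $M_1\otimes M_2$ into $i$-strings of $U_q(\mathfrak{sl}_2)$. For a pure tensor $u\otimes v$, the comparison of $\varphi_i(u)$ with $\varepsilon_i(v)$ dictates which $i$-string of $U_q(\mathfrak{sl}_2)$-irreducible the vector belongs to, and passing to the crystal limit $q\to 0$ exactly records whether $\tilde f_i$ (or $\tilde e_i$) acts on the left or right factor. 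The formulas for $\varphi_i(u\otimes v)$ and $\varepsilon_i(u\otimes v)$, together with the highest-weight criterion, are immediate bookkeeping once this rule is in place.

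The main obstacle, were one to produce a genuinely self-contained argument, is precisely the existence of crystal bases. Kashiwara established it through his ``grand loop'' induction, in which existence, uniqueness, additivity under direct sums, and the tensor product compatibility are proved simultaneously by induction on the weight; no one step can be extracted cleanly without the others. For that reason the honest course here is to import both statements from \cite{Kashi} (see also \cite{HK}) and record only the deductions above, since redoing the grand loop would dwarf the rest of this paper.
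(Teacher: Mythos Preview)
Your proposal is correct and matches the paper's own treatment: the paper does not give a proof of Theorem \ref{TH_Ka} at all but states it as background imported from Kashiwara's theory, with references to \cite{Kashi} and \cite{HK}. Your sketch of how parts (1) and (2) sit inside Kashiwara's existence/uniqueness and tensor product theorems, together with the honest remark that a self-contained argument would require the full grand-loop induction, is exactly the right way to handle this and in fact goes further than the paper itself does.
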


\begin{example}
\label{ex_crys}In type $C_{3}$, the crystal corresponding to the minuscule
weight $\omega _{1}$ is 
\begin{equation*}
B(\omega _{1}):1\overset{1}{\rightarrow }2\overset{2}{\rightarrow }3\overset{%
3}{\rightarrow }\overline{3}\overset{2}{\rightarrow }\overline{2}\overset{1}{%
\rightarrow }\overline{1}.
\end{equation*}
The tensor power $B(\omega _{1})^{\otimes 2}$%
\begin{equation*}
\begin{tabular}{lllllllllll}
1$\otimes $1 & $\overset{1}{\rightarrow }$ & 2$\otimes $1 & $\overset{2}{%
\rightarrow }$ & 3$\otimes $1 & $\overset{3}{\rightarrow }$ & \={3}$\otimes $%
1 & $\overset{2}{\rightarrow }$ & \={2}$\otimes $1 & $\overset{1}{%
\rightarrow }$ & \={1}$\otimes $1 \\ 
&  & \ {\scriptsize 1}$\downarrow $ &  & \ {\scriptsize 1}$\downarrow $ &  & 
\ {\scriptsize 1}$\downarrow $ &  &  &  & \ {\scriptsize 1}$\downarrow $ \\ 
1$\otimes $2 &  & 2$\otimes $2 & $\overset{2}{\rightarrow }$ & 3$\otimes $2
& $\overset{3}{\rightarrow }$ & \={3}$\otimes $2 & $\overset{2}{\rightarrow }
$ & \={2}$\otimes $2 &  & \={1}$\otimes $2 \\ 
\ {\scriptsize 2}$\downarrow $ &  &  &  & \ {\scriptsize 2}$\downarrow $ & 
&  &  & \ {\scriptsize 2}$\downarrow $ &  & \ {\scriptsize 2}$\downarrow $
\\ 
1$\otimes $3 & $\overset{1}{\rightarrow }$ & 2$\otimes $3 &  & 3$\otimes $3
& $\overset{3}{\rightarrow }$ & \={3}$\otimes $3 &  & \={2}$\otimes $3 & $%
\overset{1}{\rightarrow }$ & \={1}$\otimes $3 \\ 
\ {\scriptsize 3}$\downarrow $ &  & \ {\scriptsize 3}$\downarrow $ &  &  & 
& \ {\scriptsize 3}$\downarrow $ &  & \ {\scriptsize 3}$\downarrow $ &  & \ 
{\scriptsize 3}$\downarrow $ \\ 
1$\otimes $\={3} & $\overset{1}{\rightarrow }$ & 2$\otimes $\={3} & $\overset%
{2}{\rightarrow }$ & 3$\otimes $\={3} &  & \={3}$\otimes $\={3} & $\overset{2%
}{\rightarrow }$ & \={2}$\otimes $\={3} & $\overset{1}{\rightarrow }$ & \={1}%
$\otimes $\={3} \\ 
\ {\scriptsize 2}$\downarrow $ &  &  &  & \ {\scriptsize 2}$\downarrow $ & 
&  &  & \ {\scriptsize 2}$\downarrow $ &  & \ {\scriptsize 2}$\downarrow $
\\ 
1$\otimes $\={2} & $\overset{1}{\rightarrow }$ & 2$\otimes $\={2} &  & 3$%
\otimes $\={2} & $\overset{3}{\rightarrow }$ & \={3}$\otimes $\={2} &  & \={2%
}$\otimes $\={2} & $\overset{1}{\rightarrow }$ & \={1}$\otimes $\={2} \\ 
&  & \ {\scriptsize 1}$\downarrow $ &  & \ {\scriptsize 1}$\downarrow $ &  & 
\ {\scriptsize 1}$\downarrow $ &  &  &  & \ {\scriptsize 1}$\downarrow $ \\ 
1$\otimes $\={1} &  & 2$\otimes $\={1} & $\overset{2}{\rightarrow }$ & 3$%
\otimes $\={1} & $\overset{3}{\rightarrow }$ & \={3}$\otimes $\={1} & $%
\overset{2}{\rightarrow }$ & \={2}$\otimes $\={1} &  & \={1}$\otimes $\={1}%
\end{tabular}%
\end{equation*}
admits three connected components with highest weight vertices 1$\otimes $1,1%
$\otimes $2 and 1$\otimes $\={1} of weights $2\omega _{1},\omega _{2}$ and $%
0.$ This gives the decomposition 
\begin{equation*}
V_{q}(\omega _{1})^{\otimes 2}\simeq V_{q}(2\omega _{1})\oplus V_{q}(\omega
_{2})\oplus V_{q}(0).
\end{equation*}
\end{example}

\noindent\textbf{Remark:} We have seen that $\dim M_{\beta}$ is equal to the
number of vertices of weight $\beta$ in $B$ the crystal of $M$. Similarly,
the number of highest weight vertices in $B$ with weight $\lambda$ gives the
multiplicity of $V(\lambda)$ in the decomposition of $M$ into its
irreducible components.

\bigskip

Consider $C$ and $C^{\prime}$ two connected components of the crystal $B$.
The components $C$ and $C^{\prime}$ are isomorphic when there exists a
bijection $\phi_{C,C^{\prime}}$ from $C$ to $C^{\prime}$ which commutes with
the action of the Kashiwara operators, that is $b_{1}\overset{i}{\rightarrow}%
b_{2}$ in $C$ if and only if $\phi_{C,C^{\prime}}(b_{1})\overset{i}{%
\rightarrow}\phi_{C,C^{\prime}}(b_{2})$ in $C^{\prime}$. In that case, the
isomorphism $\phi_{C,C^{\prime}}$ is unique since it must send the highest
weight vertex of $C$ on the highest weight vertex of $C^{\prime}$.

The following lemma is a straightforward consequence of Theorem \ref{TH_Ka}.

\begin{lemma}
\label{Lem_HWV}Assume $b^{0}=a_{1}^{0}\otimes \cdots \otimes a_{\ell }^{0}$
is a highest weight vertex of $B(\delta )^{\otimes \ell }$.

\begin{enumerate}
\item For any $k=1,\ldots ,\ell ,$ the vertex $b^{(k),0}:=a_{1}^{0}\otimes \cdots
\otimes a_{k}^{0}$ of $B(\delta )^{\otimes \ell }$ is a highest weight vertex of $B(\delta )^{\otimes k}$
and $\varepsilon _{i}(a_{k+1}^{0}\otimes \cdots \otimes a_{\ell }^{0})\leq
\varphi _{i}(b^{(k),0})$ for any $i\in I$.

\item Consider $b=a_{1}\otimes \cdots \otimes a_{\ell }$ a vertex of $%
B(b^{0}).$ Then, for any $k=1,\ldots ,\ell ,$ the vertex $b^{(k)}:=a_{1}\otimes \cdots
\otimes a_{k}$ belongs to $B(b^{(k),0})$.
\end{enumerate}
\end{lemma}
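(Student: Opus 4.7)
The plan is to deduce both parts from Theorem~\ref{TH_Ka}(2). The tensor product rule (\ref{tens_crys}) and the highest-weight criterion at the end of that theorem apply to any two crystals, so by associativity of $\otimes$ I may regroup the $\ell$-fold tensor product as the $2$-fold tensor $B(\delta)^{\otimes k}\otimes B(\delta)^{\otimes (\ell-k)}$ whenever convenient, treating $b^{(k),0}$ as a single left factor.

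For (1), I would write $b^{0}= b^{(k),0}\otimes (a_{k+1}^{0}\otimes\cdots\otimes a_{\ell}^{0})$ and invoke the characterization that $u\otimes v$ is a highest weight vertex if and only if $u$ is highest weight and $\varepsilon_{i}(v)\leq\varphi_{i}(u)$ for every $i\in I$. This yields both conclusions of (1) at once: that $b^{(k),0}$ is itself a highest weight vertex of $B(\delta)^{\otimes k}$, and that $\varepsilon_{i}(a_{k+1}^{0}\otimes\cdots\otimes a_{\ell}^{0})\leq\varphi_{i}(b^{(k),0})$ for each $i$.

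For (2), I would argue by induction on the length of a sequence of Kashiwara lowering operators $\tilde{f}_{j}$ linking $b^{0}$ to $b$; such a sequence exists because $b^{0}$ is the source vertex of $B(b^{0})$. In the inductive step, suppose $b=\tilde{f}_{j}(b')$ with $b'=a'_{1}\otimes\cdots\otimes a'_{\ell}$ and, by hypothesis, $b'^{(k)}\in B(b^{(k),0})$. Decomposing $b'=b'^{(k)}\otimes(a'_{k+1}\otimes\cdots\otimes a'_{\ell})$ and applying (\ref{tens_crys}), the operator $\tilde{f}_{j}$ either acts on the first $k$ factors --- sending $b'^{(k)}$ to $\tilde{f}_{j}(b'^{(k)})$, which remains in $B(b^{(k),0})$ since connected components are stable under Kashiwara operators --- or acts only on the last $\ell-k$ factors and leaves $b'^{(k)}$ unchanged. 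In either case $b^{(k)}\in B(b^{(k),0})$, completing the induction.

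The only delicate point is this regrouping: the rule (\ref{tens_crys}) is phrased for two elementary factors, but since the crystal of a tensor product is well-defined up to unique isomorphism, it applies identically when either side is itself a tensor product of several crystals. This is essentially a one-line observation rather than a real obstacle, and everything else is a direct application of the combinatorial rules recalled in Theorem~\ref{TH_Ka}.
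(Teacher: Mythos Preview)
Your proposal is correct and matches the paper's approach: the paper does not give a detailed proof but simply declares the lemma ``a straightforward consequence of Theorem~\ref{TH_Ka}'', and what you have written is precisely the intended unpacking of that remark. Your regrouping observation and the inductive argument for part~(2) via the tensor rule~(\ref{tens_crys}) are exactly the steps one needs.
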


\bigskip

We will need the following proposition in Section \ref{Sec_restric}.

\begin{proposition}
\label{Prop_dec_skew}Consider $\lambda ,\mu \in P_{+}$. With the notation of 
\S\ \ref{subsec_hwmod}, the following properties hold.

\begin{enumerate}
\item $f_{\lambda /\mu }^{\ell }=\sum_{\kappa \in P_{+}}f_{\kappa }^{\ell
}m_{\kappa ,\mu }^{\lambda }$ for any $\ell \geq 1.$

\item Assume $(\lambda ^{(a)})_a$ is a sequence of weights of the form $\lambda
^{(a)}=am+o(a)$ with $m\in C$ and consider $\kappa \in P_{+}$. Then, for $a$
sufficiently large, the weight $\lambda ^{(a)}$ belongs to $P_{+}$ and $m_{\kappa ,\mu }^{\lambda
^{(a)}}=K_{\mu ,\lambda ^{(a)}-\kappa }$. Therefore 
\begin{equation}
f_{\lambda ^{(a)}/\mu }^{\ell }=\sum_{\kappa \in P_{+}}f_{\kappa }^{\ell
}K_{\mu ,\lambda ^{(a)}-\kappa }=\sum_{\gamma \in P}f_{\lambda ^{(a)}-\gamma
}^{\ell }K_{\mu ,\gamma }  \label{dec_skew_coef}
\end{equation}
for any $\ell \geq 1.$
\end{enumerate}
\end{proposition}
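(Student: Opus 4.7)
The plan is to handle the two assertions separately. Assertion (1) will follow from the semisimple decomposition of tensor products, while (2) will require a Kostant--Racah--Speiser-type multiplicity formula together with a stabilization argument that exploits $m\in C$ to kill the non-trivial Weyl-group contributions when $a$ is large. For (1) I would simply insert $V(\delta)^{\otimes\ell}=\bigoplus_{\kappa\in P_+}V(\kappa)^{\oplus f_\kappa^\ell}$ (the definition of $f_\kappa^\ell$) into the first factor of $V(\mu)\otimes V(\delta)^{\otimes\ell}$, apply (\ref{step}) to each resulting summand $V(\mu)\otimes V(\kappa)=\bigoplus_\lambda V(\lambda)^{\oplus m_{\kappa,\mu}^\lambda}$, and compare the multiplicity of $V(\lambda)$ on both sides with (\ref{def_f}).

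For (2) I would first observe that $\lambda^{(a)}=am+o(a)$ with $m\in C$ forces $\lambda^{(a)}\in P_+$ for all large $a$. The core step is the stabilization $m_{\kappa,\mu}^{\lambda^{(a)}}=K_{\mu,\lambda^{(a)}-\kappa}$. To prove it, I would multiply the identity $s_\kappa s_\mu=\sum_{\lambda'\in P_+}m_{\kappa,\mu}^{\lambda'}s_{\lambda'}$ by the Weyl denominator and use (\ref{WVF}) to recast it as $A_{\kappa+\rho}\,s_\mu=\sum_{\lambda'}m_{\kappa,\mu}^{\lambda'}A_{\lambda'+\rho}$, where $A_\nu:=\sum_{w\in W}\varepsilon(w)x^{w\nu}$. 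Expanding $s_\mu=\sum_\gamma K_{\mu,\gamma}x^\gamma$ and equating the coefficient of $x^{\lambda+\rho}$ for $\lambda\in P_+$ (where $\lambda+\rho$ is strictly dominant, so that the only $(w,\lambda')\in W\times P_+$ with $w(\lambda'+\rho)=\lambda+\rho$ is $(1,\lambda)$) should yield the Kostant-type formula
\begin{equation*}
m_{\kappa,\mu}^\lambda=\sum_{w\in W}\varepsilon(w)\,K_{\mu,\,\lambda+\rho-w(\kappa+\rho)}.
\end{equation*}

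The main obstacle I anticipate is showing that only $w=1$ survives above when $\lambda=\lambda^{(a)}$ and $a$ is large. My argument would rely on the boundedness of the weight system of $V(\mu)$: there exists $M=M(\mu)>0$ with $K_{\mu,\beta}=0$ whenever $\|\beta\|>M$. The $w=1$ contribution $K_{\mu,\lambda^{(a)}-\kappa}$ can be non-zero only for $\kappa$ satisfying $\|\lambda^{(a)}-\kappa\|\leq M$, and such $\kappa$ automatically satisfy $\kappa=am+o(a)$. Writing
\begin{equation*}
\lambda^{(a)}+\rho-w(\kappa+\rho)=(\lambda^{(a)}-\kappa)+(I-w)(\kappa+\rho),
\end{equation*}
I would then use $m\in C$ (open chamber), which ensures $(I-w)m\neq 0$ for every $w\neq 1$ since the open chamber meets each $W$-orbit in a single point, to bound $\|(I-w)(\kappa+\rho)\|\geq a\|(I-w)m\|-O(1)\to+\infty$; this overwhelms the bounded term $\|\lambda^{(a)}-\kappa\|\leq M$, so the norm of the argument of $K_{\mu,\cdot}$ eventually exceeds $M$ and the $w\neq 1$ term vanishes. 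For any remaining $\kappa$ (in particular any fixed $\kappa$, once $a$ is large enough that $\|\lambda^{(a)}-\kappa\|>M$) both sides of the desired equality are $0$, since an analogous size estimate kills every term in the formula. Finally, inserting this stabilization into the identity of (1) produces the first equality in (\ref{dec_skew_coef}), and the second one follows by the change of summation variable $\gamma=\lambda^{(a)}-\kappa$ after extending $f_\cdot^\ell$ by zero outside $P_+$.
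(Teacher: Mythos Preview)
Your argument for Part 1 is the same as the paper's, phrased at the module level rather than the character level.

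For Part 2 the approaches genuinely diverge. The paper argues combinatorially via crystals: by Theorem~\ref{TH_Ka} the highest weight vertices of weight $\lambda^{(a)}$ in $B(\kappa)\otimes B(\mu)$ are exactly the $b_\kappa\otimes b$ with $b\in B(\mu)_{\lambda^{(a)}-\kappa}$ and $\varepsilon_i(b)\le\varphi_i(b_\kappa)$ for all $i$. This immediately yields $m_{\kappa,\mu}^{\lambda^{(a)}}\le K_{\mu,\lambda^{(a)}-\kappa}$; for the reverse inequality the paper shows that the constraints $\varepsilon_i(b)\le\varphi_i(b_\kappa)$ become vacuous once $a$ is large, because $\mathrm{wt}(b_\kappa\otimes b)_i=\lambda^{(a)}_i\to\infty$ forces $\varphi_i(b_\kappa)>\varepsilon_i(b)$ (the latter being bounded since $b\in B(\mu)$). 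Your route is instead purely character-theoretic: you derive the Brauer--Klimyk formula $m_{\kappa,\mu}^{\lambda}=\sum_{w\in W}\varepsilon(w)K_{\mu,\lambda+\rho-w(\kappa+\rho)}$ from (\ref{WVF}) and then kill the $w\neq 1$ terms by a growth estimate using $(I-w)m\neq 0$ for $m\in C$. Both arguments hinge on the same phenomenon ($\lambda^{(a)}$ moving deep into the chamber), but yours avoids crystals entirely while the paper's gives the inequality $m\le K$ for free and makes the uniformity in $\kappa$ transparent.

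One point deserves care. For the statement as written (fixed $\kappa$) your proof is complete: for fixed $\kappa$ every term $K_{\mu,\lambda^{(a)}+\rho-w(\kappa+\rho)}$ eventually vanishes, so both sides are $0$. Your ``Case 2'' remark, however, is phrased as if it covers \emph{all} $\kappa$ with $\|\lambda^{(a)}-\kappa\|>M$ simultaneously, and there the claim ``an analogous size estimate kills every term'' is not justified: individual $w\neq 1$ terms can be nonzero for suitably chosen $\kappa$ near a wall. If you want the uniform version (needed later when the paper sums over all $\ell$), the cleanest patch is to invoke $m_{\kappa,\mu}^{\lambda}\le K_{\mu,\lambda-\kappa}$, which follows at once from the crystal description in Theorem~\ref{TH_Ka} and disposes of Case~2 without any estimate.
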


\begin{proof}
To prove 1, write 
\begin{equation*}
s_{\mu }(s_{\delta })^{\otimes \ell }=\sum_{\kappa }f_{\kappa }^{\ell
}s_{\kappa }s_{\mu }=\sum_{\kappa }\sum_{\lambda }f_{\kappa }^{\ell
}m_{\kappa ,\mu }^{\lambda }s_{\lambda }=\sum_{\lambda }f_{\lambda /\mu
}^{\ell }s_{\lambda }
\end{equation*}
where all the sums run over $P_{+}.$ The assertion immediately follows by
comparing the two last expressions.

For $a$ sufficiently large, we must have $\lambda ^{(a)}\in P_{+}$ for $%
\lambda ^{(a)}\sim am$ and $m\in C$. By Lemma \ref{Lem_HWV}, $m_{\kappa ,\mu
}^{\lambda ^{(a)}}$ is equal to the number of vertices of weight $\lambda
^{(a)}$ of the form $b_{\kappa }\otimes b$ where $b_{\kappa }$ is the
highest weight vertex of $B(\kappa )$ and $b\in B(\mu )$ verifies $%
\varepsilon _{i}(b)\leq \varphi _{i}(b_{\kappa })$ for any $i\in I$. In
particular $b$ has weight $\lambda ^{(a)}-\kappa $. Thus $m_{\kappa ,\mu
}^{\lambda ^{(a)}}\leq K_{\mu ,\lambda ^{(a)}-\kappa }.$

Now assume $b\in B(\mu )$ has weight $K_{\mu ,\lambda ^{(a)}-\kappa }$ so
that the vertex $b_{\kappa }\otimes b\in B(\kappa )\otimes B(\mu )$ has
dominant weight $\lambda ^{(a)}$. We have by 2 of Theorem \ref{TH_Ka} 
\begin{equation*}
\mathrm{wt}(b_{\kappa }\otimes b)_{i}=\varphi _{i}(b_{\kappa }\otimes
b)-\varepsilon _{i}(b_{\kappa }\otimes b)\leq \varphi _{i}(b_{\kappa
}\otimes b).
\end{equation*}
Since $\lambda ^{(a)}=am+o(a)$, the weight  $\mathrm{wt}(b_{\kappa
}\otimes b)_{i}$ tends to infinity with $a$, for any $i \in I$.\ Since $b\in B(\mu )$ and $\mu $
is fixed, 
\begin{equation*}
\varphi _{i}(b_{k}\otimes b)=\varphi _{i}(b)+\max \{\varphi _{i}(b_{\kappa
})-\varepsilon _{i}(b)\}>\varphi _{i}(b)
\end{equation*}
for $a$ sufficiently large. This means that $\varepsilon _{i}(b)\leq \varphi
_{i}(b_{\kappa })$ for such a $a$. So $b_{\kappa }\otimes b$ is a highest
weight vertex with dominant weight $\lambda ^{(a)}$. Therefore $m_{\kappa
,\mu }^{\lambda ^{(a)}}\geq K_{\mu ,\lambda ^{(a)}-\kappa }.$
\end{proof}

\subsection{Paths in weight lattices and crystals}

\label{subsec_paths_crys}

Let $\delta$ be a dominant weight and $B(\delta)$ the crystal of $%
V_{q}(\delta)$ $^($\footnote{%
For the sake of simplicity, we only consider in this section paths obtained
from tensor powers of irreducible modules. This hypothesis will be relaxed
in Section \ref{Sec_miscell}.}$^)$.$\;$By Theorem \ref{TH_Ka} and (\ref{def_f}),
we derive for any $\ell\geq1$ the decomposition of $B(\delta)^{\otimes\ell}$
in its irreducible components. 
\begin{equation*}
B(\delta)^{\otimes\ell}\simeq\bigsqcup_{\lambda\in P_{+}}B(\lambda)^{\oplus
f_{\lambda,\delta}^{\ell}}.
\end{equation*}
Let $\mathfrak{P}$ be the map defined on $B(\delta)^{\otimes\ell}$ which
associates to each vertex $b$ the highest weight vertex $\mathfrak{P}(b)$ of 
$B(b)$. The map $\mathfrak{P}$ can be interpreted as a Pitmann transform on
paths in the weight lattice following ideas essentially analogue to those
used in \cite{BBOC1}.

\bigskip

\noindent\textbf{Remark: }In \cite{OC1}, the transformation $\mathfrak{P}$
was computed by using Knuth insertion algorithm on semistandard tableaux.
One can prove that these semistandard tableaux yield simple parametrizations
of the crystals $B(\delta)$. There exist analogous notions of tableaux for
types $B_{n},C_{n},D_{n}$ and $G_{2}$ which similarly give a simple
parametrization of $B(\delta)$ for any dominant weight $\delta$. They were
introduced by Kashiwara and Nakashima in \cite{KN} for the classical types
and by Kang and Misra for type $G_{2}$ \cite{KM}.\ In \cite{Lec1}, \cite%
{lec2}, \cite{lec3} one describes combinatorial procedures on these tableaux
generalizing Knuth insertion algorithm. They also permit to compute the
transformation $\mathfrak{P}$ similarly to the original paper by O'Connell.\
The computation is then more efficient for it avoids the determination of a
path from $b$ to the highest weight vertex $\mathfrak{P}(b)$ of $B(b)$. We
do not pursue in this direction and refer to \cite{lec3} for a simple
exposition of these procedures in types $B_{n},C_{n},D_{n}$ and $G_{2}$.

\bigskip

To each vertex $b=a_{1}\otimes\cdots\otimes a_{\ell}$ in $B(\delta
)^{\otimes\ell}$ naturally corresponds a path in $Z(\delta,\ell),$ namely
the path $(\mu^{(1)},\ldots,\mu^{(\ell)})$ where for any $k=1,\ldots,\ell,$
we have $\mu^{(k)}=\sum_{j=1}^{k}\mathrm{wt}(a_{j}).\;$We shall denote by $%
\mathcal{R}$ the map 
\begin{equation}
\mathcal{R}:\left\{ 
\begin{array}{c}
B(\delta)^{\otimes\ell}\rightarrow Z(\delta,\ell) \\ 
b\mapsto(\mu^{(1)},\ldots,\mu^{(\ell)}).%
\end{array}
\right.  \label{defWT}
\end{equation}
The map $\mathcal{R}$ is surjective by definition of $Z(\delta,\ell)$. Write 
$\mathrm{HW}(B(\delta)^{\otimes\ell})$ for the set of highest weight
vertices in $B(\delta)^{\otimes\ell}.$ Then the image of the restriction $%
\mathcal{R}_{\mathrm{HW}}$ of $\mathcal{R}$ to $\mathrm{HW}%
(B(\delta)^{\otimes\ell})$ is a subset of $Z^{+}(\delta,\ell)$.\ Indeed, by
Lemma \ref{Lem_HWV}, if $b=a_{1}\otimes\cdots\otimes a_{\ell}$ belongs to $%
\mathrm{HW}(B(\delta )^{\otimes\ell})$, then $a_{1}\otimes\cdots\otimes
a_{k} $ belongs to $\mathrm{HW}(B(\delta)^{\otimes k})$ for any $%
k=1,\ldots,\ell$.

When each weight space in $V_{q}(\delta )$ has dimension $1$, the map $%
\mathcal{R}$ is bijective since the steps in the path of $Z(\delta ,\ell )$
are in one-to-one correspondence with the weights of $B(\delta ).$ We then
identify the paths $Z(\delta ,\ell )$ with the vertices of $B(\delta
)^{\otimes \ell }.\;$More precisely the path $(0,\mu ^{(1)},\ldots ,\mu
^{(\ell )})$ is identified with the vertex $b=a_{1}\otimes \cdots \otimes
a_{\ell }\in B(\delta )^{\otimes \ell }$ where for any $k=1,\ldots ,\ell ,$
the vertex $a_{k}$ is the unique one with weight $\mu ^{(k)}-\mu ^{(k-1)}$
in $B(\delta )$. This situation happens in particular when $\delta $ is a
minuscule weight.\ Indeed, since each weight in $V_{q}(\delta )$ belongs to
the orbit of $\delta $, each weight space has dimension $1$. We have the
following crucial property.

\begin{proposition}
\label{Prop_minuscule}The restriction $\mathcal{R}_{\mathrm{HW}}:\mathrm{HW}%
(B(\delta )^{\otimes \ell })\rightarrow Z^{+}(\delta ,\ell )$ is a
one-to-one correspondence for any $\ell \geq 1$ if and only if $\delta $ is
minuscule.
\end{proposition}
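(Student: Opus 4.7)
The plan is to prove the two implications separately, handling the easy direction first.

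\textbf{Non-minuscule case.} I would show the contrapositive at level $\ell=1$. For any $\delta$, $\mathrm{HW}(B(\delta))$ contains exactly one vertex (the highest weight vertex of weight $\delta$), while $Z^{+}(\delta,1)$ is in bijection with the set of dominant weights of $V(\delta)$. By the second item of the Remark following \S\ref{subsec_minus}, if $\delta$ is not minuscule then there exists a dominant weight $\kappa\neq\delta$ with $K_{\delta,\kappa}\neq 0$, so $|Z^{+}(\delta,1)|\geq 2>1=|\mathrm{HW}(B(\delta))|$ and $\mathcal{R}_{\mathrm{HW}}$ cannot be a bijection. This already kills the ``only if'' direction.

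\textbf{Minuscule case.} Assume $\delta$ is minuscule. Since each weight space of $V_q(\delta)$ is one-dimensional, the discussion preceding the proposition shows that the map $\mathcal{R}:B(\delta)^{\otimes\ell}\rightarrow Z(\delta,\ell)$ is itself a bijection. So $\mathcal{R}_{\mathrm{HW}}$ is automatically injective, and the only remaining point is to identify its image with $Z^{+}(\delta,\ell)$. One inclusion is an immediate consequence of Lemma \ref{Lem_HWV}(1): if $b=a_{1}\otimes\cdots\otimes a_{\ell}\in\mathrm{HW}(B(\delta)^{\otimes\ell})$ then every prefix $a_{1}\otimes\cdots\otimes a_{k}$ is again highest weight, hence its weight $\mu^{(k)}$ lies in $P_{+}$, so $\mathcal{R}(b)\in Z^{+}(\delta,\ell)$.

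The non-trivial inclusion is the reverse: if $\mathcal{R}(b)=(\mu^{(1)},\ldots,\mu^{(\ell)})\in Z^{+}(\delta,\ell)$ then $b\in\mathrm{HW}(B(\delta)^{\otimes\ell})$. I would argue by induction on $\ell$, the case $\ell=1$ being clear because a dominant weight of a minuscule representation can only be the highest one. For the inductive step, write $b=b'\otimes a_{\ell}$ where $b'=a_{1}\otimes\cdots\otimes a_{\ell-1}$. The path $(\mu^{(1)},\ldots,\mu^{(\ell-1)})$ lies in $Z^{+}(\delta,\ell-1)$, so by induction $b'$ is highest weight; in particular $\varepsilon_{i}(b')=0$ and $\varphi_{i}(b')=\langle\mu^{(\ell-1)},\alpha_{i}^{\vee}\rangle$ for every $i\in I$. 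Applying the tensor product rule of Theorem \ref{TH_Ka} gives
\begin{equation*}
\varepsilon_{i}(b)=\max\{\varepsilon_{i}(a_{\ell})-\varphi_{i}(b'),0\},
\end{equation*}
so we must show $\varepsilon_{i}(a_{\ell})\leq\varphi_{i}(b')=\langle\mu^{(\ell-1)},\alpha_{i}^{\vee}\rangle$. Here the key ingredient is that in a minuscule crystal every $i$-string has length at most one, i.e.\ $\varepsilon_{i}(a_{\ell})+\varphi_{i}(a_{\ell})\leq 1$ for all $a_{\ell}\in B(\delta)$ and all $i$. If $\varepsilon_{i}(a_{\ell})=0$ the inequality is trivial; otherwise $\varepsilon_{i}(a_{\ell})=1$ and $\varphi_{i}(a_{\ell})=0$, hence $\mathrm{wt}(a_{\ell})_{i}=-1$, and the hypothesis $\mu^{(\ell)}\in P_{+}$ yields $\langle\mu^{(\ell-1)},\alpha_{i}^{\vee}\rangle\geq 1$, as required.

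\textbf{Expected obstacle.} The bookkeeping is light; the one place that demands care is the induction step above, where the minuscule hypothesis is used through the characterization $\varepsilon_{i}+\varphi_{i}\leq 1$ to convert the geometric condition ``the path stays dominant'' into the crystal condition ``$\varepsilon_{i}(b)=0$''. Without that property the same argument fails, which is consistent with the non-minuscule direction settled at $\ell=1$.
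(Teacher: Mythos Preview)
Your proof is correct and follows essentially the same route as the paper's: both directions are handled identically, with the non-minuscule case settled at $\ell=1$ via the existence of a second dominant weight, and the minuscule case by induction on $\ell$ using that $i$-strings in $B(\delta)$ have length at most one to verify $\varepsilon_i(a_\ell)\leq\varphi_i(b')$. The only cosmetic difference is that you read off $\varphi_i(b')=\langle\mu^{(\ell-1)},\alpha_i^\vee\rangle$ directly (using $\varepsilon_i(b')=0$) and apply the explicit formula for $\varepsilon_i(b'\otimes a_\ell)$, whereas the paper obtains the same inequality by expanding $\mathrm{wt}(b)_i=\mathrm{wt}(b^\flat)_i+\mathrm{wt}(a_\ell)_i$.
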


\begin{proof}
Assume $\delta $ is minuscule.\ We can then identify the paths in $%
Z^{+}(\delta ,\ell )$ with vertices of $B(\delta )^{\otimes \ell }$ as
explained above and $\mathcal{R}_{\mathrm{HW}}$ is injective.\ We thus have
to show that any vertex $b=a_{1}\otimes \cdots \otimes a_{\ell }\in B(\delta
)^{\otimes \ell }$ such that $\mathrm{wt}(a_{1}\otimes \cdots \otimes a_{k})$
is dominant for any $k=1,\ldots ,\ell $ is a highest weight vertex. We
proceed by induction on $\ell $. For $\ell =1,$ the highest weight vertex of 
$B(\delta )$ is $a_{1}$ for $B(\delta )$ contains a unique vertex of
dominant weight ($\delta $ being minuscule). Assume $b=a_{1}\otimes \cdots
\otimes a_{\ell }\in B(\delta )^{\otimes \ell }$ is such that $\mathrm{wt}%
(a_{1}\otimes \cdots \otimes a_{k})$ is dominant for any $k=1,\ldots ,\ell $%
. By the induction hypothesis, $b^{\flat }=a_{1}\otimes \cdots \otimes
a_{\ell -1}$ is of highest weight.\ According to assertion $(2)$ of Theorem %
\ref{TH_Ka}, it suffices to show that $\varepsilon _{i}(a_{\ell })\leq
\varphi _{i}(b^{\flat })$ for any $i\in I.$ But $\varepsilon _{i}(a_{\ell })\in \{0,1\} $ since $\delta $ is
minuscule. One can therefore
assume that $\varepsilon _{i}(a_{\ell })=1$.\ In this case, we have $\varphi
_{i}(a_{\ell })=0$ because $\delta $ is minuscule.\ The condition $\mathrm{wt%
}(b)\in P_{+}$ implies that $\varphi _{i}(b)\geq \varepsilon _{i}(b)$ for
any $i\in I$. Moreover, the condition $\mathrm{wt}(b)=\mathrm{wt}(b^{\flat
})+\mathrm{wt}(a_{\ell })$ can be written 
\begin{equation*}
\varphi _{i}(b)-\varepsilon _{i}(b)=\varphi _{i}(b^{\flat })-\varepsilon
_{i}(b^{\flat })+\varphi _{i}(a_{\ell })-\varepsilon _{i}(a_{\ell })=\varphi
_{i}(b^{\flat })-\varepsilon _{i}(b^{\flat })-1\text{ for any }i\in I.
\end{equation*}
Thus $\varphi _{i}(b^{\flat })\geq \varepsilon _{i}(b^{\flat })+1\geq
\varepsilon _{i}(a_{\ell })$ as expected.

Conversely, assume $\mathcal{R}_{\mathrm{HW}}:\mathrm{HW}(B(\delta
)^{\otimes \ell })\rightarrow $ $Z^{+}(\delta ,\ell )$ is a one-to-one
correspondence for any $\ell \geq 1$. If $\delta $ is not minuscule, there
exists a weight $\mu $ such that $\dim V_{q}(\delta )_{\mu }>0$ which is not
in the orbit of $\delta $ under the action of the Weyl group $W$.\ The orbit
of $\mu $ under $W$ intersects the cone of dominant weights $P_{+}$.
Therefore, we can assume $\mu \in P_{+}$.\ The crystal $B(\delta )$ thus contains a
vertex $b_{\mu }$ of dominant weight $\mu $ which is not a highest weight
vertex since $\mu \neq \delta .\;$Then the path in $P$ from $0$ to $\mu $
belongs to $Z^{+}(\delta ,1)$ but is not in the image of $\mathrm{HW}%
(B(\delta ))$ by $\mathcal{R}_{\mathrm{HW}}$ which contradicts our
assumption.
\end{proof}

\bigskip

\noindent\textbf{Remark:} There exist dominant weights $\delta$ which are
not minuscule but such that each weight spaces of $V(\delta)$ has dimension $%
1$. This is notably the case for $\delta=k\omega_{1},k\in\mathbb{N}$ in type 
$A_{n-1}$ and $\delta=\omega_{1}$ in type $B_{n}$.\ In that case, according
to the previous proposition, the map $\mathcal{R}_{\mathrm{HW}}$ does not
provide a bijection between highest weight vertices and paths in $P_{+}$.

\bigskip

Let $Z^{+}(\delta,\ell,\lambda)$ be the subset of $Z^{+}(\delta,\ell)$ of
paths starting at $0$ and ending at $\lambda\in P_{+}.\;$Theorem \ref{TH_Ka}
and the previous Proposition immediately yield the following corollary
equating the number of paths in $Z^{+}(\delta,\ell,\lambda)$ to an outer
multiplicity in the tensor power $V(\delta)^{\otimes\ell}.$

\begin{corollary}
Assume $\delta $ is minuscule. We have $\mathrm{card}(Z^{+}(\delta ,\ell
,\lambda ))=f_{\lambda ,\delta }^{\ell }$ where the outer multiplicity $%
f_{\lambda ,\delta }^{\ell }$ is defined by (\ref{def_f}).
\end{corollary}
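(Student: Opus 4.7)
The plan is to combine Proposition \ref{Prop_minuscule} with part 1 of Theorem \ref{TH_Ka}, checking only that the bijection $\mathcal{R}_{\mathrm{HW}}$ is compatible with the weight grading.

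First, I would recall that by part 1 of Theorem \ref{TH_Ka} together with the definition (\ref{def_f}), the multiplicity $f_{\lambda,\delta}^{\ell}$ equals the number of highest weight vertices of weight $\lambda$ in $B(\delta)^{\otimes\ell}$; that is,
\begin{equation*}
f_{\lambda,\delta}^{\ell}=\mathrm{card}\bigl(\mathrm{HW}(B(\delta)^{\otimes\ell})\cap B(\delta)^{\otimes\ell}_{\lambda}\bigr).
\end{equation*}

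Next, I would invoke Proposition \ref{Prop_minuscule}: because $\delta$ is minuscule, the map $\mathcal{R}_{\mathrm{HW}}:\mathrm{HW}(B(\delta)^{\otimes\ell})\to Z^{+}(\delta,\ell)$ is a bijection. It remains to observe that this bijection restricts to one between highest weight vertices of weight $\lambda$ and paths ending at $\lambda$. This is immediate from the definition (\ref{defWT}): for $b=a_{1}\otimes\cdots\otimes a_{\ell}$, the final point of the path $\mathcal{R}(b)=(\mu^{(1)},\ldots,\mu^{(\ell)})$ is $\mu^{(\ell)}=\sum_{j=1}^{\ell}\mathrm{wt}(a_{j})=\mathrm{wt}(b)$, so $\mathcal{R}(b)\in Z^{+}(\delta,\ell,\lambda)$ if and only if $b$ is a highest weight vertex of weight $\lambda$.

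Combining these two observations gives $\mathrm{card}(Z^{+}(\delta,\ell,\lambda))=f_{\lambda,\delta}^{\ell}$. There is no real obstacle here; the substantive content has already been established in Proposition \ref{Prop_minuscule}, and the corollary is essentially a matter of reading off the endpoint statistic through the bijection.
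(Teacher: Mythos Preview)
Your proof is correct and follows precisely the route indicated by the paper, which simply states that the corollary is an immediate consequence of Theorem~\ref{TH_Ka} and Proposition~\ref{Prop_minuscule}. You have merely made explicit the one detail the paper leaves implicit, namely that $\mathcal{R}_{\mathrm{HW}}$ sends a highest weight vertex of weight $\lambda$ to a path ending at $\lambda$.
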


As far as we are aware this equality was first established by Grabiber and
Mayard in \cite{GrMa} by using some previous works of Proctor \cite{Pr}.\
Crystal basis theory permits to derive a very short proof of this identity.

\subsection{Probability distribution on $B(\protect\delta)$}

\label{subsec_def_pa}

The aim of this paragraph is to endow $B(\delta)$ with a probability
distribution. We are going to associate to each vertex $a\in B(\delta)$ a
probability $p_{b}$ such that 
\begin{equation}
0<p_{a}<1\text{ and }\sum_{a\in B(\delta)}p_{a}=1.  \tag{I}  \label{CI}
\end{equation}
The probability distributions we consider are compatible with the weight
graduation, that is for any $a,a^{\prime}\in B(\delta)$, 
\begin{equation}
\mathrm{wt}(a)=\mathrm{wt}(a^{\prime})\Longrightarrow p_{a}=p_{a^{\prime}}. 
\tag{II}  \label{CII}
\end{equation}
We proceed as follows. Let $t=(t_{1},\ldots,t_{n})$ be a $n$-tuple of
positive reals (recall that $n$ is the rank of the root system considered).\
Let $a\in B(\delta)$.\ For any $\kappa\in Q_{+}$ such that $%
\kappa=\sum_{i=1}^{n}m_{i}\alpha_{i}$, we set $t^{[\kappa]}=t_{1}^{m_{1}}%
\cdots t_{n}^{m_{n}}$. Since $\mathrm{wt}(a)$ is a weight of $V_{q}(\delta)$%
, there exist nonnegative integers $m_{1},\ldots,m_{n}$ such that $\mathrm{wt%
}(a)=\delta-\sum_{i=1}^{n}m_{i}\alpha_{i}$.\ We can compute the sum $%
\Sigma_{\delta}(t)=\sum_{a\in B(\delta)}t^{[\mathrm{wt}(a)]}.$ The
probability distribution is then defined by 
\begin{equation}
p_{a}=\frac{t^{[\mathrm{wt}(a)]}}{\Sigma_{\delta}(t)}\text{ for any }a\in
B(\delta)\text{,}  \label{def-p(a)}
\end{equation}
so that we have $\sum_{a\in B(\delta)}p_{a}=1$.\ It is clear that conditions
(\ref{CI}) and (\ref{CII}) are satisfied. Note that if we have an arrow $a%
\overset{i}{\rightarrow}a^{\prime}$ in $B(\delta)$, then $%
p_{a^{\prime}}=p_{a}\times t_{i}$ since $\mathrm{wt}(a^{\prime})=\mathrm{wt}%
(a)-\alpha_{i}$.\ Also, the integer $m_{i}$ can easily be read on the
crystal $B(\delta).$ It corresponds to the number of arrows $\overset{i}{%
\rightarrow}$ in any path connected the highest weight vertex of $B(\delta)$
to $a$. Recall that these numbers are independent of the path considered
since we have the weight graduation $\mathrm{wt}$ on the crystals. The case
where $t_{i}=1$ for any $i=1,\ldots,n,$ corresponds to the uniform
probability distribution on $B(\delta)$ considered in \cite{BBOC1}.

\bigskip

\noindent\textbf{Remark:} Observe that our construction depends only on the
fixed $n$-tuple $t=(t_{1},\ldots,t_{n})$.

\bigskip 

When $t_{1}=\cdots =t_{n},$ one gets $p_{a}=\frac{1}{\dim V(\delta )}$ for any $a\in
B(\delta )$.\ We shall say in this case that the distribution is uniform. In
the rest of the paper, we assume the $n$-tuple $t=(t_{1},\ldots ,t_{n})$ is
fixed for each root system corresponding to a simple Lie algebra. We denote
by $x=(x_{1},\ldots ,x_{N})\in \mathbb{R}_{>0}^{N}$ any solution of the
algebraic system 
\begin{equation}
x^{\alpha _{i}}=t_{i}^{-1},i=1,\ldots ,n.  \label{system}
\end{equation}%
Let us briefly explain why such a solution necessary exists.\ Let $%
U=(u_{i,j})$ be the $N\times n$ matrix whose entries are determined by the
decompositions $\alpha _{i}=\sum_{j=1}^{N}u_{i,j}\varepsilon _{j},i=1,\ldots
,n.$ By taking the logarithm of each equation in (\ref{system}), we are lead
to solve the equation $^{t}UX=Y$ where $X=(\ln x_{1},\ldots ,\ln x_{N})$ and 
$Y=(-\ln t_{1},\ldots ,-\ln t_{n}).$ This linear system necessary admits
solutions because $U$ has rank $n$. This follows from the fact that the set
of simple roots $\{\alpha _{1},\ldots ,\alpha _{n}\}$ generates a $n$%
-dimensional subspace in $\mathbb{R}^{N}$. Each solution $X$ yields positive
solutions $x_{1},\ldots ,x_{N}$ of (\ref{system}). Observe that (\ref{system}%
) admits a unique solution when $N=n$. With the convention of \cite{Bour},
this happens for the root systems $B_{n},C_{n},D_{n},E_{8},F_{4}$ and $G_{2}$%
. Now consider $a\in B(\delta ).$ By the previous definition of $t_{a},$ we
then derive the relation $t_{a}=x^{-\delta }x^{\mathrm{wt}(a)}.$ This allows
to write 
\begin{equation*}
\Sigma _{\delta }(t)=x^{-\delta }\sum_{a\in B(\delta )}x^{\mathrm{wt}%
(a)}=x^{-\delta }s_{\delta }(x)\text{ and }p_{a}=\frac{x^{\mathrm{wt}(a)}}{%
s_{\delta }(x)}.
\end{equation*}

\begin{example}
\ \label{exa_vect}

\begin{enumerate}
\item Assume $\delta =\omega _{1}$ for type $A_{n-1}$.\ Then we have 
\begin{equation*}
B(\delta ):a_{1}\overset{1}{\rightarrow }a_{2}\overset{2}{\rightarrow }%
\cdots \overset{n-1}{\rightarrow }a_{n}.
\end{equation*}
The simple roots are the $\alpha _{i}=\varepsilon _{i}-\varepsilon
_{i+1},i=1,\ldots ,n-1.$ We thus have $t_{i}=\frac{x_{i+1}}{x_{i}}$ for any $%
i=1,\ldots ,n-1.\;$We obtain $x_{i}=x_{1}t_{1}\cdots t_{i-1}$ for any $%
i=2,\ldots ,n$. In that case, we have $N=(n-1)+1$ where $n-1$ is the rank of
the root system considered. We can normalize our probability distribution so
that $x_{1}+\cdots +x_{n}=1$. This gives $p_{i}=x_{i}$ for $i=1,\ldots ,n.$
The corresponding random walk in $P$ corresponds to the ballot problem where
each transition in the direction $\varepsilon _{i}$ has probability $p_{i}$.

\item Assume $\delta =\omega _{1}$ for type $C_{n}$.\ Then we have 
\begin{equation*}
B(\delta ):a_{1}\overset{1}{\rightarrow }a_{2}\overset{2}{\rightarrow }%
\cdots \overset{n-1}{\rightarrow }a_{n}\overset{1}{\rightarrow }a_{\overline{%
n}}\overset{n-1}{\rightarrow }\cdots \overset{2}{\rightarrow }a_{\overline{2}%
}\overset{1}{\rightarrow }a_{\overline{1}}.
\end{equation*}
The simple roots are the $\alpha _{i}=\varepsilon _{i}-\varepsilon
_{i+1},i=1,\ldots ,n-1$ and $\alpha _{n}=2\varepsilon _{n}.\;$We thus have $%
t_{i}=\frac{x_{i+1}}{x_{i}}$ for any $i=1,\ldots ,n-1$ and $%
t_{n}=x_{n}^{-2}.\;$This gives ${x_{i}=\frac{1}{t_{i}\cdots t_{n-1}\sqrt{%
t_{n}}}}$ for any $i=1,\ldots n-1$ and $x_{n}=\frac{1}{\sqrt{t_{n}}}.$  So 
$p_{a_{i}}=\frac{x_{i}}{s_{\omega _{1}}(x)}$ and $p_{a_{\overline{i}}}=\frac{%
x_{i}^{-1}}{s_{\omega _{1}}(x)}$ where $s_{\omega
_{1}}(x)=\sum_{i=1}^{n}(x_{i}+x_{i}^{-1}).$ We thus obtain a random walk in $%
\mathbb{Z}^{n},$ with steps from one point to one of the nearest neighbors,
and $p_{a_{i}}p_{a_{\overline{i}}}$ independent of $i$.
\end{enumerate}
\end{example}

\subsection{The random walk $\mathcal W$ in the weight lattice}

\subsubsection{From a probability measure on the crystal $B(\protect\delta)$
to a random walk on the weight lattice.}

\label{general}

A random walk in the weight lattice $P$ isomorphic to $\mathbb{Z}^{n}$ is characterized by the law of its
increments, that is a probability measure on $P$. As the map $b\mapsto%
\mathrm{wt}(b)$ sends the crystal $B(\delta)$ into $P$, any probability
measure $p$ on $B(\delta)$ can be pushed forward and defined a probability $%
\mathrm{wt}_{\ast}p$ on $P$.

The random walk with law $\mathrm{wt}_{\ast}p$ can be naturally realized as
a random process defined on the infinite product space ${\mathcal{B}}%
(\delta)^{\otimes\mathbb{N}}$ equipped with the product measure $p^{\otimes 
\mathbb{N}}$. Here is a ``concrete'' realization of this construction. The
infinite product space ${\mathcal{B}}(\delta)^{\otimes\mathbb{N}}$ is the
projective limit of the sequence
of tensor products $\left( B(\delta)^{\otimes\ell}\right) _{\ell\in\mathbb{N}%
}$ associated to the projections $\pi_{\ell^{\prime},\ell}(a_{1}\otimes
a_{2}\otimes\ldots\otimes a_{\ell^{\prime}})=a_{1}\otimes a_{2}\otimes
\ldots\otimes a_{\ell}$ if $\ell\leq\ell^{\prime}$. We denote by $\pi
_{\infty,\ell}$ the canonical projection from ${\mathcal{B}}(\delta
)^{\otimes\mathbb{N}}$ onto $B(\delta)^{\otimes\ell}$. By Kolmogorov's
theorem, we know that there exists a unique probability measure $\mathbb{P}$
on the space ${\mathcal{B}}(\delta)^{\otimes\mathbb{N}}$ whose image by each
projection $\pi_{\infty,\ell}$ is the probability $\mathbb{P}^{(\ell)}$
defined by 
\begin{equation}
\mathbb{P}^{(\ell)}(a_{1}\otimes a_{2}\otimes\ldots\otimes
a_{\ell})=\prod_{i=1}^{\ell}p(a_{i}).  \label{prod-meas}
\end{equation}
On the probability space $({\mathcal{B}}(\delta)^{\otimes\mathbb{N}},\mathbb{%
P})$, the random variables 
\begin{equation*}
a_{1}\otimes a_{2}\otimes\ldots\mapsto a_{\ell},\quad\ell\in\mathbb{N}
\end{equation*}
are independent and identically distributed with law $p$.

Now, on this probability space we define the random variables $\mathcal{W}%
_{\ell}$ by 
\begin{equation}
\mathcal{W}_{\ell}=\mathrm{wt}\circ\pi_{\infty,\ell}\ ,  \label{def_Wl}
\end{equation}
that is 
\begin{equation*}
\mathcal{W}_{\ell}(b)=\mathrm{wt}\left( b^{(\ell)}\right)
\end{equation*}
if $b=a_{1}\otimes a_{2}\otimes\ldots$ and $b^{(\ell)}=a_{1}\otimes
a_{2}\otimes\ldots\otimes a_{\ell}$.

The random process $\left( \mathcal{W}_{\ell}\right) _{\ell\in\mathbb{N}}$
is a realization of the random walk on $P$ with law $\mathrm{wt}_{\ast}p$.
Indeed, for any $\beta,\beta^{\prime}\in P$, we have 
\begin{multline*}
\mathbb{P}\left( \mathcal{W}_{\ell+1}=\beta^{\prime}\mid\mathcal{W}_{\ell
}=\beta\right) =\mathbb{P}^{(\ell+1)}\left( \mathrm{wt}\left( b^{(\ell
)}\otimes a_{\ell+1}\right) =\beta^{\prime}\mid\mathrm{wt}\left( b^{(\ell
)}\right) =\beta\right) = \\
\mathbb{P}^{(\ell+1)}\left( \mathrm{wt}\left( b^{(\ell)}\right) +\mathrm{wt}%
\left( a_{\ell+1}\right) =\beta^{\prime}\mid\mathrm{wt}\left(
b^{(\ell)}\right) =\beta\right) =\mathbb{P}^{(\ell+1)}\left( \mathrm{wt}%
\left( a_{\ell+1}\right) =\beta^{\prime}-\beta\right) .
\end{multline*}

\subsubsection{Application to our particular situation}

Now we come back to the particular choice of a probability distribution $p$ on
the crystal $B(\delta)$ which has been described in the previous section ;   the probability of a vertex $b$ depends only on its weight and,  if there is an arrow $b\overset{i}{\rightarrow}b^{\prime}$, then $%
p_{b^{\prime}}=p_{b}\times t_{i}$.

We extend the notation $p_{b}$ to the vertices $b\in B(\delta)^{\otimes\ell}$
using the rule (\ref{prod-meas}) : 
\begin{equation*}
p_{a_{1}\otimes a_{2}\otimes\ldots\otimes a_{\ell}} =
p_{a_{1}}p_{a_{2}}\ldots p_{a_{\ell}}.
\end{equation*}

We recall that if $b\in B(\delta)$, then $p_{b}=\frac{x^{\mathrm{wt}(b)}}{%
s_{\delta}(x)}$. We see that if $b\in B(\delta)^{\otimes\ell}$, then 
\begin{equation}
p_{b}=\frac{x^{\mathrm{wt}(b)}}{s_{\delta}(x)^{\ell}}.  \label{expr_pb2}
\end{equation}
Using the rules of construction of the tensor powers of the crystal graph,
it is straightforward to verify that the previous properties extend  to
tensor powers : let $b,b^{\prime}$ in $B(\delta)^{\otimes\ell}$,

\begin{itemize}
\item Assume we have $\mathrm{wt}(b)=\mathrm{wt}(b^{\prime }).\;$Then $%
p_{b}=p_{b^{\prime }}$.

\item Assume we have an arrow $b\overset{i}{\rightarrow }b^{\prime }$ in $%
B(\delta )^{\otimes \ell }$. Then $p_{b^{\prime }}=p_{b}\times t_{i}$.
\end{itemize}

If we follow the construction described in Subsection \ref{general}, we
obtain a random walk $\left( \mathcal{W}_{\ell}\right) _{\ell\in\mathbb{N}}$
on the weight lattice $P$ with the following properties :

\begin{itemize}
\item

the law of the increments of the random walk $\left( \mathcal{W}_{\ell
}\right) $ is given by 
\begin{equation}
\mathbb{P}\left( \mathcal{W}_{\ell+1}=\beta^{\prime}\mid\mathcal{W}_{\ell
}=\beta\right) =\frac{K_{\delta,\beta^{\prime}-\beta}x^{\beta^{\prime}-\beta
}}{s_{\delta}(x)}  \label{transW}
\end{equation}

\item the expectation of the increments, also called the drift of the random walk,
is 
\begin{equation}
m:=\frac{1}{s_{\delta}(x)}\sum_{\alpha}K_{\delta,\alpha}x^{\alpha}\alpha.
\label{drift}
\end{equation}

\item the probability of a finite path $\pi=\left( \beta_{0}=0,\beta_{1},\beta
_{2},\ldots,\beta_{\ell}\right) $ is given by 
\begin{equation*}
\frac{1}{s_{\delta}(x)^{\ell}}K_{\delta,\alpha^{(1)}}\times\cdots\times
K_{\delta,\alpha^{(\ell)}}x^{\beta_{\ell}}
\end{equation*}
where $\alpha^{(i)}=\beta_{i}-\beta_{i-1}$ for any $i=1,\ldots,\ell.$
\end{itemize}

\bigskip

\noindent\textbf{Remarks:}

\begin{enumerate}
\item Consider a dominant weight $\lambda $ and $B$ a connected component in 
$B(\delta )^{\otimes \ell }$ isomorphic to $B(\lambda ).$ We set 
\begin{equation}
S_{\lambda ,\ell }(x):=\sum_{b\in B}p_{b}=\sum_{\beta \in P}\sum_{b\in
B_{\beta }}\frac{x^{\beta }}{s_{\delta }(x)^{\ell }}=\frac{1}{s_{\delta
}(x)^{\ell }}\sum_{\beta \in P}K_{\lambda ,\beta }x^{\beta }=\frac{%
s_{\lambda }(x)}{s_{\delta }(x)^{\ell }}  \label{def-S(l,p)}
\end{equation}
where the second equality is a consequence of (\ref{expr_pb2}) and the last
equality is obtained by definition of the Weyl character. Clearly, $%
S_{\lambda ,\ell }(x)$ gives the probability $\mathbb{P}(B)$ that a random
vertex of $B(\delta )^{\otimes \ell }$ belongs to $B$. Observe that this
probability does not depend on the connected component $B$ itself but only
on $\lambda $ and $\ell $.

\item When $\lambda =\delta$ is minuscule the situation simplifies. Indeed $%
K_{\delta ,\alpha }=1$ for any weight $\alpha $ of $V(\delta )$, therefore
the map $\mathrm{wt}$ is one-to-one on $B(\delta )$. The probability of the
path $\pi =\left( \beta _{0}=0,\beta _{1},\beta _{2},\ldots ,\beta _{\ell
}\right) $ is then equal to $\frac{x^{\beta _{\ell }}}{s_{\delta }(x)^{\ell }%
}$. In particular, \emph{when }$\delta $\emph{\ is minuscule, two paths
starting and ending at the same points have the same probability}.
\end{enumerate}

\section{Markov chains in the Weyl chamber}

\label{Sec_MarkovCains}The purpose of this section is to introduce a Markov
chain $(\mathcal{H}_{\ell})$ in the Weyl chamber obtained from $(\mathcal{W}%
_{\ell})_{\ell\geq1}$ by an operation on crystals. This operation consists
in the composition of $\mathcal{W}_{\ell}$ with a transformation of $%
B(\delta)^{\otimes\mathbb{N}}$ which plays the role of the Pitman
transformation. Here and in the sequel, $\delta$ is a fixed dominant weight,
i.e. an element of $P_{+}$.

\subsection{The Markov chain $\mathcal{H}$}

The map $\mathfrak{P}$ has been introduced in \S\ \ref{subsec_paths}.\ It
associates to any vertex $b(\ell)\in B(\delta)^{\otimes\ell}$, the highest
weight vertex $\mathfrak{P}(b(\ell))$ of $B(b(\ell))\subset B(\delta
)^{\otimes\ell}$. By Lemma \ref{Lem_HWV}, the transformation on $B(\delta
)^{\otimes \N}$ also denoted by $\mathfrak{P}$ such that $\mathfrak{P}(b)=(%
\mathfrak{P}(b(\ell)))_{\ell\geq1}$ for any $b=(b(\ell))_{\ell\geq 1}\in
B(\delta)^{\otimes\N}$ is well-defined. We then consider the random
variable $\mathcal{H}_{\ell}:=\mathcal{W}_{\ell}\circ\mathfrak{P}$ (see (\ref%
{def_Wl})) defined on the probability space $\Omega(\delta)=(B(\delta
)^{\otimes\mathbb{N}},\mathbb{P})$ with values in $P_{+}$. This yields a
stochastic process $\mathcal{H=(H}_{\ell}\mathcal{)}_{\ell\geq0}$.

\begin{proposition}
\label{Prop_laws}Consider $\ell \in \mathbb{Z}$ and $\lambda \in P_{+}$.
Then $\mathbb{P}(\mathcal{H}_{\ell }=\lambda )=f_{\lambda ,\delta }^{\ell }%
\frac{s_{\lambda }(x)}{s_{\delta }(x)^{\ell }}$.
\end{proposition}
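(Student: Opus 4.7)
The statement says $\mathbb{P}(\mathcal{H}_\ell = \lambda) = f_{\lambda,\delta}^\ell \, s_\lambda(x)/s_\delta(x)^\ell$, and essentially all the hard work has already been done: I would just need to unwind the definitions and apply the identity (\ref{def-S(l,p)}) from the previous section.

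First I would translate the event. By construction $\mathcal{H}_\ell(b) = \mathrm{wt}(\mathfrak{P}(b(\ell)))$, where $\mathfrak{P}(b(\ell))$ is the highest weight vertex of the connected component of $B(\delta)^{\otimes \ell}$ containing $b(\ell)$. Hence $\{\mathcal{H}_\ell = \lambda\}$ is exactly the event that $b(\ell)$ belongs to some connected component of $B(\delta)^{\otimes \ell}$ with highest weight $\lambda$, i.e.\ to some connected component isomorphic to $B(\lambda)$.

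Next I would decompose this event along the connected components. By Theorem \ref{TH_Ka}(1) and the definition (\ref{def_f}) of the multiplicities, one has the disjoint decomposition
\begin{equation*}
B(\delta)^{\otimes \ell} \simeq \bigsqcup_{\mu \in P_+} B(\mu)^{\oplus f_{\mu,\delta}^\ell},
\end{equation*}
so exactly $f_{\lambda,\delta}^\ell$ connected components are isomorphic to $B(\lambda)$. Writing $\mathcal{C}_\lambda$ for the (finite) collection of those components, the event $\{\mathcal{H}_\ell = \lambda\}$ is the disjoint union of $\{b(\ell) \in B\}$ for $B \in \mathcal{C}_\lambda$, and pushing forward under $\pi_{\infty,\ell}$ gives
\begin{equation*}
\mathbb{P}(\mathcal{H}_\ell = \lambda) = \sum_{B \in \mathcal{C}_\lambda} \mathbb{P}^{(\ell)}(B).
\end{equation*}

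Finally I would invoke formula (\ref{def-S(l,p)}) of the preceding section, which gives $\mathbb{P}^{(\ell)}(B) = S_{\lambda,\ell}(x) = s_\lambda(x)/s_\delta(x)^\ell$ for every $B \in \mathcal{C}_\lambda$; crucially, this value depends only on $\lambda$ and $\ell$, not on which of the $f_{\lambda,\delta}^\ell$ isomorphic components is chosen. Summing the $f_{\lambda,\delta}^\ell$ equal terms yields the claimed formula. There is no real obstacle here since the content is already packaged in (\ref{def-S(l,p)}); the only point to be careful about is that the probability assigned to a connected component truly depends only on its isomorphism class, which follows from the compatibility of the distribution with the weight graduation (condition (\ref{CII})) together with the fact that isomorphic components have the same weight multiplicities.
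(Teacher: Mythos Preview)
Your proof is correct and follows essentially the same approach as the paper: decompose the event $\{\mathcal{H}_\ell=\lambda\}$ according to the connected components of $B(\delta)^{\otimes\ell}$ of highest weight $\lambda$, note there are $f_{\lambda,\delta}^\ell$ of them by Theorem~\ref{TH_Ka} and (\ref{def_f}), and apply (\ref{def-S(l,p)}) to see each contributes $s_\lambda(x)/s_\delta(x)^\ell$. The only cosmetic difference is that the paper indexes the sum by highest weight vertices rather than by components, which is of course equivalent.
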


\begin{proof}
By definition of the random variable $\mathcal{H}_{\ell },$ we have 
\begin{equation*}
\mathbb{P}(\mathcal{H}_{\ell }=\lambda )=\sum_{b_{\lambda }\in \mathrm{HW}%
(B(\delta )^{\otimes \ell }),\mathrm{wt}(b_{\lambda })=\lambda }\mathbb{P}%
(B(b_{\lambda })).
\end{equation*}
We have seen in (\ref{def-S(l,p)}) that $\mathbb{P}(B(b_{\lambda }))=\frac{%
s_{\lambda }(x)}{s_{\delta }(x)^{\ell }}$ does not depend on $b_{\lambda }$
but only on $\lambda $.\ By definition of $f_{\lambda
,\delta }^{\ell }$  (see (\ref{def_f}))  and Theorem \ref{TH_Ka}, the number of connected
components in $B(\delta )^{\otimes \ell }$ isomorphic to $B(\lambda )$ is
equal to $f_{\lambda ,\delta }^{\ell }$. This gives $\mathbb{P}(\mathcal{H}%
_{\ell }=\lambda )=f_{\lambda ,\delta }^{\ell }\frac{s_{\lambda }(x)}{%
s_{\delta }(x)^{\ell }}$.
\end{proof}

We can now state the main result of this Section

\begin{theorem}
\label{Th_main}The stochastic process $\mathcal{H}$ is a Markov chain with
transition probabilities 
\begin{equation*}
\Pi _{\mathcal{H}}(\mu ,\lambda )=\frac{m_{\mu ,\delta }^{\lambda
}s_{\lambda }(x)}{s_{\delta }(x)s_{\mu }(x)}\quad \lambda ,\mu \in P_{+}.
\end{equation*}
\end{theorem}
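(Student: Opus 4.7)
The plan is to compute the joint law of $(\mathcal{H}_1, \ldots, \mathcal{H}_{\ell+1})$ and show that the conditional probability $\mathbb{P}(\mathcal{H}_{\ell+1} = \mu_{\ell+1} \mid \mathcal{H}_1 = \mu_1, \ldots, \mathcal{H}_\ell = \mu_\ell)$ depends only on the last pair $(\mu_\ell, \mu_{\ell+1})$ and equals the claimed formula for $\Pi_{\mathcal{H}}$; this will simultaneously establish the Markov property and identify the transition matrix.

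The first step rests on the observation, obtained from Lemma \ref{Lem_HWV}(2), that the map $b \mapsto (\mathcal{H}_1(b), \ldots, \mathcal{H}_\ell(b))$ is constant on each connected component of $B(\delta)^{\otimes \ell}$. Indeed, if $b$ lies in a component $B'$ with highest weight vertex $b^0$, the lemma gives $b^{(k)} \in B(b^{0,(k)})$ for every $k \leq \ell$, so $\mathcal{H}_k(b) = \mathrm{wt}(\mathfrak{P}(b^{0,(k)}))$ depends only on $B'$. Consequently the event $\{\mathcal{H}_k = \mu_k, \ k \leq \ell\}$ is a disjoint union of whole components of type $\mu_\ell$; let $N_\ell(\mu_1, \ldots, \mu_\ell)$ be their number. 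Since a single component of type $\mu_\ell$ carries probability $s_{\mu_\ell}(x)/s_\delta(x)^\ell$ by (\ref{def-S(l,p)}), we obtain
\begin{equation*}
\mathbb{P}(\mathcal{H}_1 = \mu_1, \ldots, \mathcal{H}_\ell = \mu_\ell) = N_\ell(\mu_1, \ldots, \mu_\ell) \cdot \frac{s_{\mu_\ell}(x)}{s_\delta(x)^\ell}.
\end{equation*}

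The heart of the proof is then the recursion
\begin{equation*}
N_{\ell+1}(\mu_1, \ldots, \mu_{\ell+1}) = N_\ell(\mu_1, \ldots, \mu_\ell) \cdot m_{\mu_\ell, \delta}^{\mu_{\ell+1}}.
\end{equation*}
To prove it I would argue that every component $B''$ contributing to $N_{\ell+1}$ is contained in a unique $B' \otimes B(\delta)$ with $B'$ contributing to $N_\ell$: writing the highest weight vertex of $B''$ as $b''^0 = b \otimes a$, Lemma \ref{Lem_HWV}(1) forces $b = (b''^0)^{(\ell)}$ to be itself a highest weight vertex of $B(\delta)^{\otimes \ell}$, hence equal to the $b'^0$ of some component $B'$; the compatibility with $(\mu_1, \ldots, \mu_\ell)$ transfers automatically because the prefixes of $b''^0$ and $b'^0$ of length $\leq \ell$ coincide. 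Conversely, given such a $B'$, the crystal $B' \otimes B(\delta) \cong B(\mu_\ell) \otimes B(\delta)$ decomposes by (\ref{step}) into exactly $m_{\mu_\ell, \delta}^{\mu_{\ell+1}}$ sub-components of type $\mu_{\ell+1}$, each of them automatically contributing to $N_{\ell+1}$.

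Taking the ratio of the joint probabilities at levels $\ell+1$ and $\ell$ then gives
\begin{equation*}
\mathbb{P}(\mathcal{H}_{\ell+1} = \mu_{\ell+1} \mid \mathcal{H}_1 = \mu_1, \ldots, \mathcal{H}_\ell = \mu_\ell) = \frac{N_{\ell+1}}{N_\ell} \cdot \frac{s_{\mu_{\ell+1}}(x)}{s_{\mu_\ell}(x) s_\delta(x)} = \frac{m_{\mu_\ell, \delta}^{\mu_{\ell+1}} s_{\mu_{\ell+1}}(x)}{s_\delta(x) s_{\mu_\ell}(x)},
\end{equation*}
which depends only on the last pair $(\mu_\ell, \mu_{\ell+1})$. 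The main obstacle is the combinatorial recursion for $N_\ell$: it requires both assertions of Lemma \ref{Lem_HWV} to coherently match the highest weight structure of $B(\delta)^{\otimes \ell}$ with that of $B(\delta)^{\otimes (\ell+1)}$, ensuring that the "extensions" of admissible components are in bijection with the multiplicities $m_{\mu_\ell,\delta}^{\mu_{\ell+1}}$.
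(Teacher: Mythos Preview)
Your proof is correct and follows essentially the same route as the paper: both arguments identify the event $\{\mathcal{H}_k=\mu_k,\ k\leq \ell\}$ with a union of connected components of $B(\delta)^{\otimes\ell}$, count these components via the product $\prod_k m_{\mu_k,\delta}^{\mu_{k+1}}$ (your recursion $N_{\ell+1}=N_\ell\cdot m_{\mu_\ell,\delta}^{\mu_{\ell+1}}$ is precisely this product unwound one step), and then take the ratio of joint probabilities using (\ref{def-S(l,p)}). Your justification of the recursion is in fact more explicit than the paper's, which simply invokes ``Theorem~\ref{TH_Ka} and a straightforward induction'' for the same count.
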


\begin{proof}
Consider a sequence of dominant weights $\lambda ^{(1)},\ldots ,\lambda
^{(\ell )},\lambda ^{(\ell +1)}$ such that $\lambda ^{(1)}=\delta ,\lambda
^{(\ell )}=\mu $ and $\lambda ^{(\ell +1)}=\lambda $. Recall that, for any $%
k=1,\ldots ,\ell ,$ the integer  $m_{\lambda ^{(k)},\delta }^{\lambda ^{(k+1)}}$ is the
multiplicity of $V(\lambda ^{(k+1)})$ in $V(\lambda ^{(k)})\otimes V(\delta
).\;$Let $b\in \Omega (\delta )$ be such that $\mathcal{H}_{k}(b)=\lambda
^{(k)}$ for any $k=1,\ldots ,\ell +1$. Write $h^{(\ell +1)}=a_{1}^{h}\otimes
\cdots \otimes a_{\ell +1}^{h}$ for the highest weight vertex of $B(b^{(\ell
+1)})\subset B(\delta )^{\otimes \ell +1}$.\ By Lemma \ref{Lem_HWV}, for $k=1,\ldots ,\ell +1,$ the vertex $%
h^{(k)}=a_{1}^{h}\otimes \cdots \otimes a_{k}^{h}$ is  the highest weight vertex of $b^{(k)}$ and has weight $\lambda ^{(k)}.$
Let us denote by $S$ the set of highest weight vertices of $B(\delta
)^{\otimes \ell +1}$ whose projection on $B(\delta )^{k}$ has weight $%
\lambda ^{(k)}$ for any $k=1,\ldots ,\ell +1$.\ By Theorem \ref{TH_Ka} and a
straightforward induction, we have 
\begin{equation*}
\mathrm{card}(S)=\prod_{k=1}^{\ell }m_{\lambda ^{(k)},\delta }^{\lambda
^{(k+1)}}.
\end{equation*}
We can now write 
\begin{equation*}
\mathbb{P}(\mathcal{H}_{\ell +1}=\lambda ,\mathcal{H}_{k}=\lambda ^{(k)}%
\text{for any }k=1,\ldots ,\ell )=\sum_{h\in S}\ \sum_{b\in
B(h)}p_{b}=\sum_{h\in S}\mathbb{P}(B(h)).
\end{equation*}
Each $h$ in $S$ is a highest weight vertex of weight $\lambda $ and one gets $\mathbb{P}(B(h))=S_{\lambda ,\ell +1}(x)$
 (see (\ref{def-S(l,p)})). This
gives 
\begin{equation*}
\mathbb{P}(\mathcal{H}_{\ell +1}=\lambda ,\mathcal{H}_{k}=\lambda ^{(k)}%
\text{ for any }k=1,\ldots ,\ell )=\prod_{k=1}^{\ell }m_{\lambda
^{(k)},\delta }^{\lambda ^{(k+1)}}\times S_{\lambda ,\ell +1}(x).
\end{equation*}
Similarly, we have 
\begin{equation*}
\mathbb{P}(\mathcal{H}_{k}=\lambda ^{(k)}\text{ for any }k=1,\ldots ,\ell
)=\prod_{k=1}^{\ell -1}m_{\lambda ^{(k)},\delta }^{\lambda ^{(k+1)}}\times
S_{\mu ,\ell }(x).
\end{equation*}
Hence 
\begin{equation*}
\mathbb{P}(\mathcal{H}_{\ell +1}=\lambda \mid \mathcal{H}_{k}=\lambda ^{(k)}%
\text{ for any }k=1,\ldots ,\ell )=\frac{m_{\lambda ^{(\ell )},\delta
}^{\lambda ^{(\ell +1)}}S_{\lambda ^{(\ell +1)},\ell +1}(x)}{S_{\lambda
^{(\ell )},\ell }(x)}=\frac{m_{\mu ,\delta }^{\lambda }s_{\lambda }(x)}{%
s_{\delta }(x)s_{\mu }(x)}.
\end{equation*}
In particular, $\mathbb{P}(\mathcal{H}_{\ell +1}=\lambda \mid \mathcal{H}%
_{k}=\lambda ^{(k)}$ for any $k=1,\ldots ,\ell )$ depends only on $\lambda $
and $\mu = \lambda^{(\ell)} $ which shows the Markov property.
\end{proof}

\subsection{Intertwining operators}

For any $\lambda\in P_{+}$ and $\beta\in P,$ the event $(\mathcal{W}_{\ell
}=\beta,\mathcal{H}_{\ell}=\lambda)$ contains all the elements $b$ in $%
\Omega(\delta)$ such that $b^{(\ell)}$ has weight $\beta$ and belongs to a
connected component of $B(\delta)^{\otimes\ell}$ with highest weight $%
\lambda $.\ This gives $\mathbb{P}(\mathcal{W}_{\ell}=\beta,\mathcal{H}%
_{\ell}=\lambda)=\frac{1}{s_{\delta}(x)^{\ell}}f_{\ell,\lambda}K_{\lambda,%
\beta }x^{\beta}$ since there is $f_{\ell,\lambda}$ connected components in $%
B(\delta)^{\otimes\ell}$ of highest weight $\lambda$, each of them contains $%
K_{\lambda,\beta}$ vertices of weight $\beta$ whose common probability is $%
\frac{x^{\beta}}{s_{\delta}(x)^{\ell}}$.\ Using Proposition \ref{Prop_laws},
we obtain 
\begin{equation}
\mathbb{P}(\mathcal{W}_{\ell}=\beta\mid\mathcal{H}_{\ell}=\lambda )=\frac{%
K_{\lambda,\beta}x^{\beta}}{s_{\lambda}(x)}  \label{def_Inter}
\end{equation}
which is independent of $\ell$.\ We set $\mathcal{K}(\lambda,\beta):=\mathbb{P}(%
\mathcal{W}_{\ell}=\beta\mid\mathcal{H}_{\ell}=\lambda)=\frac{K_{\lambda
,\beta}x^{\beta}}{s_{\lambda}(x)}$.

\begin{theorem}
We have the intertwining relation $\Pi _{\mathcal{H}}\mathcal{K}=\mathcal{K}%
\Pi _{\mathcal{W}}.$
\end{theorem}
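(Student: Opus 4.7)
The plan is to expand both products of matrices using the explicit formulas already computed, reduce both sides to a common factor times a character-theoretic sum, and then apply Lemma \ref{Lem_identity} to identify the two sums.

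First I would fix $\mu \in P_{+}$ and $\beta \in P$ and compute the entries on both sides. For the left-hand side, using Theorem \ref{Th_main} and the formula $\mathcal{K}(\lambda,\beta) = \frac{K_{\lambda,\beta} x^{\beta}}{s_{\lambda}(x)}$, the product telescopes as
\begin{equation*}
(\Pi_{\mathcal{H}} \mathcal{K})(\mu,\beta) = \sum_{\lambda \in P_+} \frac{m_{\mu,\delta}^{\lambda} s_{\lambda}(x)}{s_{\delta}(x) s_{\mu}(x)} \cdot \frac{K_{\lambda,\beta} x^{\beta}}{s_{\lambda}(x)} = \frac{x^{\beta}}{s_{\delta}(x) s_{\mu}(x)} \sum_{\mu \leadsto \lambda} m_{\mu,\delta}^{\lambda} K_{\lambda,\beta},
\end{equation*}
where the restriction to $\mu \leadsto \lambda$ is automatic since $m_{\mu,\delta}^{\lambda} = 0$ otherwise.

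For the right-hand side, I would use the transition formula (\ref{transW}) for $\Pi_{\mathcal{W}}$, writing $\Pi_{\mathcal{W}}(\gamma,\beta) = \frac{K_{\delta,\beta-\gamma} x^{\beta-\gamma}}{s_{\delta}(x)}$. Then
\begin{equation*}
(\mathcal{K} \Pi_{\mathcal{W}})(\mu,\beta) = \sum_{\gamma \in P} \frac{K_{\mu,\gamma} x^{\gamma}}{s_{\mu}(x)} \cdot \frac{K_{\delta,\beta-\gamma} x^{\beta-\gamma}}{s_{\delta}(x)} = \frac{x^{\beta}}{s_{\mu}(x) s_{\delta}(x)} \sum_{\gamma \in P} K_{\mu,\gamma} K_{\delta,\beta-\gamma}.
\end{equation*}

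Comparing the two expressions, equality of $(\Pi_{\mathcal{H}} \mathcal{K})(\mu,\beta)$ and $(\mathcal{K} \Pi_{\mathcal{W}})(\mu,\beta)$ reduces to the identity
\begin{equation*}
\sum_{\mu \leadsto \lambda} m_{\mu,\delta}^{\lambda} K_{\lambda,\beta} = \sum_{\gamma \in P} K_{\mu,\gamma} K_{\delta,\beta-\gamma},
\end{equation*}
which is exactly Lemma \ref{Lem_identity}. There is no genuine obstacle here: everything is bookkeeping once the transition matrices are written out, and the only nontrivial input, the character identity obtained by comparing the two expansions of $s_{\mu}(x) s_{\delta}(x)$, has already been established.
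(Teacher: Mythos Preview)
Your proof is correct and follows exactly the same approach as the paper: expand both matrix products entrywise using the explicit formulas for $\Pi_{\mathcal{H}}$, $\mathcal{K}$, and $\Pi_{\mathcal{W}}$, factor out the common scalar $\frac{x^{\beta}}{s_{\delta}(x)s_{\mu}(x)}$, and reduce to Lemma~\ref{Lem_identity}. The only cosmetic difference is that the paper writes the left-hand sum over all $\lambda\in P_{+}$ while you restrict to $\mu\leadsto\lambda$, which as you note is immaterial.
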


\begin{proof}
Consider $\mu \in P_{+}$ and $\beta \in P$. Write $(\Pi _{\mathcal{H}}%
\mathcal{K})(\mu ,\beta )$ for the coefficient of the matrix product $\Pi _{%
\mathcal{H}}\mathcal{K}$ associated to the pair $(\mu ,\beta )$. Define $(%
\mathcal{K}\Pi _{\mathcal{W}})(\mu ,\beta )$ similarly.\ By using Theorem %
\ref{Th_main} and (\ref{def_Inter}), we have 
\begin{equation*}
(\Pi _{\mathcal{H}}\mathcal{K})(\mu ,\beta )=\sum_{\lambda \in P_{+}}\Pi _{%
\mathcal{H}}(\mu ,\lambda )\mathcal{K}(\lambda ,\beta )=\sum_{\lambda \in
P_{+}}\frac{m_{\mu ,\delta }^{\lambda }s_{\lambda }(x)}{s_{\delta }(x)s_{\mu
}(x)}\frac{K_{\lambda ,\beta }x^{\beta }}{s_{\lambda }(x)}=\sum_{\lambda \in
P_{+}}\frac{m_{\mu ,\delta }^{\lambda }K_{\lambda ,\beta }x^{\beta }}{%
s_{\delta }(x)s_{\mu }(x)}.
\end{equation*}
This gives 
\begin{equation*}
(\Pi _{\mathcal{H}}\mathcal{K})(\mu ,\beta )=\frac{x^{\beta }}{s_{\delta
}(x)s_{\mu }(x)}\sum_{\lambda \in P_{+}}m_{\mu ,\delta }^{\lambda
}K_{\lambda ,\beta }\text{.}
\end{equation*}
On the other hand, using (\ref{transW}) 
\begin{equation*}
(\mathcal{K}\Pi _{\mathcal{W}})(\mu ,\beta )=\sum_{\gamma \in P}\mathcal{K}%
(\mu ,\gamma )\Pi _{\mathcal{W}}(\gamma ,\beta )=\sum_{\gamma \in P}\frac{%
K_{\mu ,\gamma }x^{\gamma }}{s_{\mu }(x)}K_{\delta ,\beta -\gamma }\frac{%
x^{\beta -\gamma }}{s_{\delta }(x)}.
\end{equation*}
This yields to
$\displaystyle 
(\mathcal{K}\Pi _{\mathcal{W}})(\mu ,\beta )=\frac{x^{\beta }}{s_{\delta
}(x)s_{\mu }(x)}\sum_{\gamma \in P}K_{\mu ,\gamma }K_{\delta ,\beta -\gamma
}. $
Therefore the equality $\Pi _{\mathcal{H}}\mathcal{K=K}\Pi _{\mathcal{W}}$
reduces to 
$\displaystyle 
\sum_{\lambda \in P_{+}}m_{\mu ,\delta }^{\lambda }K_{\lambda ,\beta
}=\sum_{\gamma \in P}K_{\mu ,\gamma }K_{\delta ,\beta -\gamma }
$
which was established in Lemma \ref{Lem_identity}.
\end{proof}

\section{Restriction to the Weyl chamber}

\label{Sec_restric}

We have explicit formulae for the transition matrices $\Pi_{\mathcal{W}}$
and $\Pi_{\mathcal{H}}$ of the Markov chains $\mathcal{W}$ and $\mathcal{H}$. The  matrix $\Pi_{\mathcal{W}}$ has entries in $P$  and the matrix $\Pi_{\mathcal{H}}$ has entries in $P_{+}$%
. We will see that if the representation $\delta$ is minuscule then $\Pi _{%
\mathcal{H}}$ is a Doob transform of the restriction of $\Pi_{\mathcal{W}}$
to $P_{+}$.

\subsection{Doob transform of the random walk $\mathcal{W}$ restricted to
the Weyl chamber}

\label{subsec_Psi} Recall that $(\mathcal{W}_{\ell})_{\ell\geq1}$ is a
Markov chain with transition matrix $\Pi_{\mathcal{W}}$. Since the closed
Weyl chamber $\overline{C}$ is a subset of $\mathbb{R}^{N}$, it makes sense
to consider the substochastic matrix $\Pi_{\mathcal{W}}^{\overline{C}%
}(\mu,\lambda)$, that is the restriction of the transition matrix of $%
\mathcal{W}$ to $\overline{C}$.\ The following Proposition answers the
question whether the transition matrix $\Pi_{\mathcal{H}}$ of the Markov
chain $\mathcal{H}$ (see Theorem \ref{Th_main}) can be regarded as a Doob
transform of $\Pi_{\mathcal{W}}^{\overline{C}}$.\ We denote by $\psi$ the
function defined on $P_{+}$ by $\psi(\lambda)=x^{-\lambda}s_{\lambda}(x)$.

\begin{proposition}
\label{Th_psi}If $\delta $ is a minuscule representation, then the
transition matrix $\Pi _{\mathcal{H}}$ is the Doob $\psi $-transform of the
substochastic matrix $\Pi _{\mathcal{W}}^{\overline{C}}$, in particular $%
\psi $ is  harmonic with respect to  this substochastic matrix. If $\delta $ is
not minuscule, then the transition matrix $\Pi _{\mathcal{H}}$ cannot be
realized as a Doob transform of the substochastic matrix $\Pi _{\mathcal{W}%
}^{\overline{C}}$.
\end{proposition}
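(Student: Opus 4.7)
My plan is to reduce the proposition to an identity on multiplicities, for which the minuscule hypothesis is exactly what is needed.

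First I would compute the Doob $\psi$-transform explicitly. For $\mu,\lambda\in P_+$, using the formula for $\Pi_{\mathcal{W}}$ from (\ref{transW}) and the definition $\psi(\lambda)=x^{-\lambda}s_\lambda(x)$, one gets
\[
\frac{\psi(\lambda)}{\psi(\mu)}\,\Pi_{\mathcal{W}}^{\overline{C}}(\mu,\lambda)
=\frac{x^{-\lambda}s_\lambda(x)}{x^{-\mu}s_\mu(x)}\cdot\frac{K_{\delta,\lambda-\mu}\,x^{\lambda-\mu}}{s_\delta(x)}
=\frac{K_{\delta,\lambda-\mu}\,s_\lambda(x)}{s_\mu(x)s_\delta(x)}.
\]
Comparing with $\Pi_{\mathcal{H}}(\mu,\lambda)=\frac{m_{\mu,\delta}^\lambda s_\lambda(x)}{s_\delta(x)s_\mu(x)}$ from Theorem~\ref{Th_main}, the equality $\Pi_{\mathcal{H}}=(\Pi_{\mathcal{W}}^{\overline{C}})_\psi$ boils down to the pointwise identity
\[
m_{\mu,\delta}^\lambda=K_{\delta,\lambda-\mu}\qquad\text{for all }\mu,\lambda\in P_+.
\]

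For the direct implication, assume $\delta$ is minuscule. Then property~(1) of the Remark in \S\ref{subsec_minus} gives precisely this identity, so the two transition matrices agree. Harmonicity of $\psi$ with respect to $\Pi_{\mathcal{W}}^{\overline{C}}$ is then automatic: summing the equality $\Pi_{\mathcal{H}}(\mu,\lambda)=\frac{\psi(\lambda)}{\psi(\mu)}\Pi_{\mathcal{W}}^{\overline{C}}(\mu,\lambda)$ over $\lambda\in P_+$ and using that $\Pi_{\mathcal{H}}$ is stochastic yields $\sum_{\lambda\in P_+}\Pi_{\mathcal{W}}^{\overline{C}}(\mu,\lambda)\psi(\lambda)=\psi(\mu)$.

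For the converse, suppose $\Pi_{\mathcal{H}}=(\Pi_{\mathcal{W}}^{\overline{C}})_h$ for some strictly positive function $h:P_+\to\mathbb{R}_{>0}$. The same comparison as above forces, for every pair $\mu,\lambda\in P_+$,
\[
m_{\mu,\delta}^\lambda\,\frac{s_\lambda(x)}{s_\mu(x)s_\delta(x)}=\frac{h(\lambda)}{h(\mu)}\cdot\frac{K_{\delta,\lambda-\mu}\,x^{\lambda-\mu}}{s_\delta(x)}.
\]
Since the right-hand side involves only strictly positive factors when $K_{\delta,\lambda-\mu}\neq 0$, this forces $m_{\mu,\delta}^\lambda\neq 0$ whenever $K_{\delta,\lambda-\mu}\neq 0$. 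Specializing to $\mu=0$, the trivial decomposition $V(0)\otimes V(\delta)=V(\delta)$ gives $m_{0,\delta}^\lambda=\delta_{\lambda,\delta}$, so we must have $K_{\delta,\lambda}=0$ for every dominant $\lambda\neq\delta$. This is precisely the negation of property~(2) of the Remark in \S\ref{subsec_minus}, hence $\delta$ must be minuscule.

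The main point (and the only non-formal step) is the identification of the Doob transform with $\Pi_{\mathcal{H}}$ via the equality $m_{\mu,\delta}^\lambda=K_{\delta,\lambda-\mu}$; this is exactly the content of the two items of the minuscule Remark, so the whole proposition reduces to already-recorded facts once the explicit computation is carried out. No serious obstacle is expected beyond keeping track of the factor $x^{\lambda-\mu}$ which is absorbed by the choice $\psi(\lambda)=x^{-\lambda}s_\lambda(x)$.
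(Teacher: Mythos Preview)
Your proof is correct and follows essentially the same approach as the paper: both reduce the first part to the identity $m_{\mu,\delta}^\lambda=K_{\delta,\lambda-\mu}$ from the minuscule Remark, and both obtain the converse by specializing to $\mu=0$, $\lambda=\kappa$ for a dominant weight $\kappa\neq\delta$ with $K_{\delta,\kappa}\neq 0$ (whose existence is property~(2) of that Remark). The paper rewrites the $h$-transform condition via the auxiliary function $g=h/\psi$ before specializing, while you go directly to the positivity implication, but the logic is the same.
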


\begin{proof}
Given $\lambda ,\mu $ in $P_{+}$ we have 
\begin{equation*}
\Pi _{\mathcal{W}}^{\overline{C}}(\mu ,\lambda )=K_{\delta ,\lambda -\mu
}p_{\lambda -\mu }\text{ and }\Pi _{\mathcal{H}}(\mu ,\lambda )=\frac{m_{\mu
,\delta }^{\lambda }s_{\lambda }(x)}{s_{\delta }(x)s_{\mu }(x)}.
\end{equation*}
If we assume $\delta $ is minuscule, we have $K_{\delta ,\lambda -\mu
}=m_{\mu ,\delta }^{\lambda }\in \{0,1\}$ for any $\lambda ,\mu \in P_{+}$
by the Remark in \S~\ref{subsec_minus}.\ Therefore 
\begin{equation*}
\Pi _{\mathcal{H}}(\mu ,\lambda )=\frac{\psi (\lambda )}{\psi (\mu )}\Pi _{%
\mathcal{W}}^{\overline{C}}(\mu ,\lambda ).
\end{equation*}
Conversely, if $\Pi _{\mathcal{H}}$ can be realized as a $h$-transform of
the substochastic matrix $\Pi _{\mathcal{W}}^{\overline{C}}$ we must have 
\begin{equation*}
\frac{h(\lambda )}{h(\mu )}K_{\delta ,\lambda -\mu }\frac{x^{\lambda }}{%
x^{\mu }s_{\delta }(x)}=\frac{m_{\mu ,\delta }^{\lambda }s_{\lambda }(x)}{%
s_{\delta }(x)s_{\mu }(x)}\text{ for any }\lambda ,\mu \in P_{+}.
\end{equation*}
This is equivalent to the equality 
\begin{equation}
K_{\delta ,\lambda -\mu }\frac{h(\lambda )}{h(\mu )}=m_{\mu ,\delta
}^{\lambda }\frac{x^{-\lambda }s_{\lambda }(x)}{x^{-\mu }s_{\mu }(x)}\text{
for any }\lambda ,\mu \in P_{+}.  \label{equality}
\end{equation}
Since $h$ and $\psi $ are positive functions, there exists a positive
function $g$ such that $h(\lambda )=g(\lambda )\psi (\lambda )$, for any $%
\lambda \in P_{+}$.\ We thus obtain 
\begin{equation}
K_{\delta ,\lambda -\mu }\frac{g(\lambda )}{g(\mu )}=m_{\mu ,\delta
}^{\lambda }\text{ for any }\lambda ,\mu \in P_{+}.  \label{rela}
\end{equation}
Assume (\ref{rela}) holds and $\delta $ is not minuscule. By the Remark in 
\S\ \ref{subsec_minus}, there exists a dominant weight $\kappa \in P_{+}$
distinct of $\delta $ such that $K_{\delta ,\kappa }\neq 0$. For $\mu =0$
and $\lambda =\kappa ,$ we then obtain 
\begin{equation*}
K_{\delta ,\kappa }\frac{g(\kappa )}{g(0)}=m_{0,\delta }^{\kappa }.
\end{equation*}
Now, since $\kappa \neq \delta ,$ we have $m_{0,\delta }^{\kappa }=0.$
Recall that $K_{\delta ,\kappa }\neq 0$. This gives $g(\kappa )=0$. Contradiction. 
\end{proof}

\subsection{Limit of $\protect\psi$ along a drift}

The purpose of the remaining paragraphs of this section is to connect the
Markov chain $\mathcal{H}$ to the random walk $\mathcal{W}$ conditioned to
never exit $\overline{C}$. We have seen in \S\ \ref%
{subsec_paths_crys} that, for the minuscule representation $V(\delta)$, the
vertices of $B(\delta)^{\otimes\ell}$ can be identified with the paths $%
Z(\delta,\ell).$ Moreover, by Proposition \ref{Prop_minuscule}, the highest
weight vertices of $B(\delta)^{\otimes\ell}$ are   identified with the
paths $Z^{+}(\delta,\ell)$ which remains in $\overline{C}$. The drift $m$ of 
$\mathcal{W}$ given by Formula (\ref{drift}) belongs to $C$ (the open Weyl
chamber) when 
\begin{equation}
m=\sum_{i\in I}m_{i}\omega_{i}\text{ with }m_{i}>0\text{ for any }i\in I.
\label{full}
\end{equation}

\begin{lemma}
\label{lemma_ti}

\begin{enumerate}
\item We have $m\in C$ if and only if $0<t_{i}<1$ for any $i\in I$.

\item For any direction $\vec{d}$ in $C$, there exists an $n$-tuple $%
t=(t_{1},\ldots ,t_{n})$ with $0<t_{i}<1$ such that $\vec{d}$ is the direction of
the drift $m$ associated to the random walk $\mathcal{W}$ defined from $t$
as in \S\ \ref{subsec_paths_crys}.
\end{enumerate}
\end{lemma}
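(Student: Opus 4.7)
The plan is to treat the two assertions separately.

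For Part 1, the key idea is to regroup the defining sum of $m$ along the $i$-strings of $B(\delta)$ for a fixed $i\in I$. Recall that $B(\delta)$ decomposes under the Kashiwara operators $\tilde e_i,\tilde f_i$ into disjoint $i$-strings $b_0\overset{i}{\to}b_1\overset{i}{\to}\cdots\overset{i}{\to}b_k$; on such a string one has $\mathrm{wt}(b_j)=\mathrm{wt}(b_0)-j\alpha_i$, $\langle\mathrm{wt}(b_j),\alpha_i^\vee\rangle=k-2j$, and $p_{b_j}=p_{b_0}t_i^{\,j}$. Writing $m_i:=\langle m,\alpha_i^\vee\rangle$, so that $m=\sum_i m_i\omega_i$, the formula (\ref{drift}) regroups as
$$
m_i=\frac{1}{s_\delta(x)}\sum_{i\text{-strings}}x^{\mathrm{wt}(b_0)}\,f_k(t_i),\qquad f_k(t):=\sum_{j=0}^k(k-2j)\,t^j.
$$
Here is the main calculation: the coefficients of $f_k$ satisfy $(k-2j)+(k-2(k-j))=0$, so pairing $j$ with $k-j$ yields
$$
f_k(t)=\sum_{0\le j<k/2}(k-2j)\,t^j\bigl(1-t^{k-2j}\bigr),
$$
from which one reads off that $f_k(1)=0$, that $f_k(t)>0$ for $0<t<1$ and $k\ge 1$, and that $f_k(t)<0$ for $t>1$ and $k\ge 1$. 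Since $\mathfrak{g}$ is simple and $\delta\neq 0$, the representation $V(\delta)$ is faithful, so for every $i\in I$ the operator $\tilde f_i$ is nonzero somewhere on $B(\delta)$, producing at least one $i$-string of length $\ge 1$. Combined with $x^{\mathrm{wt}(b_0)}>0$, this gives $m_i>0$ if and only if $t_i<1$, which is Part 1.

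For Part 2, I would use the Legendre/Gibbs picture. Setting $\xi:=\log x$ and regarding $t$ as parameter via $\alpha_i\cdot\xi=-\log t_i$, the formula (\ref{drift}) reads $m=\nabla_\xi\log s_\delta(e^\xi)$. Strict convexity of $\log s_\delta$ on the effective parameter space, which holds because the weights of $V(\delta)$ affinely span $\delta+Q_{\mathbb{R}}$, implies that the smooth map $t\mapsto m(t)$ is a diffeomorphism from $\mathbb{R}_{>0}^n$ onto the relative interior of the convex hull $K$ of the weights of $V(\delta)$. The set $K$ is $W$-invariant; since $V(\delta)$ is faithful the weights span $Q_{\mathbb{R}}$ linearly, so the unique $W$-fixed point $0$ lies in the interior of $K$. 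Given any $\vec d\in C$, for $\varepsilon>0$ small enough the point $\varepsilon\vec d$ belongs to $\mathrm{int}(K)\cap C$, and is realized as $m(t)$ for some unique $t\in\mathbb{R}_{>0}^n$. By Part 1, this $t$ automatically lies in $(0,1)^n$, and the drift $m(t)=\varepsilon\vec d$ has the required direction $\vec d$.

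The main obstacle in this plan is the positivity of $f_k$, which is cleanly handled by the antisymmetric pairing above, and the fact that $0\in\mathrm{int}(K)$, which follows from $W$-invariance once the weights span the relevant subspace. In the reductive type-$A$ case $\mathfrak{g}=\mathfrak{gl}_n$, one argues after quotienting out the $W$-fixed central direction: the convex hull of the weights has $0$ in its relative interior of that quotient, so every direction in the chamber $C$, modulo the center, is realized as a drift direction.
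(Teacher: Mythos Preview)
Your treatment of Part~1 is essentially the paper's: both decompose $B(\delta)$ into $i$-strings, identify the contribution of a string of length $k$ as a positive multiple of $\sum_{j=0}^{k}(k-2j)t_i^{\,j}$, pair $j$ with $k-j$, and read off the sign. Your extra remark that faithfulness of $V(\delta)$ guarantees at least one nontrivial $i$-string for each $i$ is implicit in the paper.

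Part~2 is where the two arguments diverge. The paper asserts that the $i$-th coordinate $m_i(t)$ of the drift depends only on $t_i$, and then solves for the $t_i$ independently by the intermediate value theorem. As written that assertion is not correct: in the displayed expression $m_i(t)=\sum_{a\in S_i}p_a\sum_{j}(k_a-2j)(t_i^{\,j}-t_i^{\,k_a-j})$, the probabilities $p_a=x^{\mathrm{wt}(a)}/s_\delta(x)$ carry the full dependence on $(t_1,\ldots,t_n)$ through the normalizing constant $s_\delta(x)$; already for $A_2$ with $\delta=\omega_1$ one finds $m_1=(1-t_1)/(1+t_1+t_1t_2)$. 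Your route avoids this issue entirely: recognizing $m=\nabla_\xi\log s_\delta(e^\xi)$ as the moment map of a finite exponential family, you use that it is a diffeomorphism from $\mathbb{R}_{>0}^{\,n}$ onto the relative interior of the weight polytope $K$, and then that $0\in\mathrm{relint}(K)$ by $W$-invariance of $K$ (the centroid of a $W$-stable polytope is $W$-fixed, hence $0$, and lies in the interior). This is correct for simple $\mathfrak g$, and even gives uniqueness of $t$ for a prescribed $m$. The only place to be a bit more careful is the $\mathfrak{gl}_n$ convention adopted in the paper: there the weight polytope sits in a proper affine hyperplane (constant coordinate sum) not through $0$, so your ``$0\in\mathrm{int}(K)$'' fails literally; your concluding remark about quotienting by the center is the right fix, but you should state explicitly that every ray in $C$ meets that hyperplane and that the intersection point lies in $\mathrm{relint}(K)$, which is what is actually needed.
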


\begin{proof}
1: Recall that $m=\sum_{b\in B(\delta )}p_{b}\mathrm{wt}(b)$. By (\ref%
{defWeightCrys}), for any $b\in B(\delta)$, the coordinates of the weight $\mathrm{wt}(b)$ are determined by the
$i$-chains containing $b$. Here by such an $i$-chain, we mean the sub-crystal
containing all the vertices connected to $b$ by arrows $i$. By (\ref{defWeightCrys}) and (%
\ref{expr_pb2}), the contribution of any $i$-chain of length $k$%
\begin{equation*}
a_{1}\overset{i}{\rightarrow }a_{2}\overset{i}{\rightarrow }\cdots \overset{i%
}{\rightarrow }a_{k+1}
\end{equation*}
to $m$ is equal to 
\begin{equation*}
p_{a_{1}}\sum_{j=0}^{[k/2]}(k-2j)(t_{i}^{j}-t_{i}^{k-j})\omega _{i}.
\end{equation*}
For each fixed $i$, all these contributions are positive if $0<t_{i}<1$ and
they are all nonpositive if $i>1$. This proves Assertion 1.

2: For any $i=1,\ldots ,n$, let $S_{i}$ be the set of vertices $a$ in $%
B(\delta )$ such that $\varepsilon _{i}(a)=0$. For any $a\in S_{i},$ write $%
k_{a}$ the length of the $i$-chain in $B(\delta )$ starting at $a$. We
denote by $m(t)=\sum_{i=1}^{n}m_{i}(t)\omega _{i}\in C$ the drift
corresponding to the $n$-tuple $t=(t_{1},\ldots ,t_{n})$ with $0<t_{i}<1.$
By the previous arguments 
\begin{equation*}
m_{i}(t)=m_{i}(t_{i})=\sum_{a\in
S_{i}}p_{a}\sum_{j=0}^{[k_{a}/2]}(k_{a}-2j)(t_{i}^{j}-t_{i}^{k_{a}-j})
\end{equation*}%
depends only on $t_{i}$. Write $M_{i}=\max \{m_{i}(t_{i})\mid 0<t_{i}<1\}$.
Then $M_{i}>0$ and for any $u_{i}$ in $]0,M_{i}]$, there exists $t_{i}\in
]0,1[$ such that $m_{i}(t_{i})=u_{i}$. Now consider a direction $\vec{d}$ in $C$.
Assume $v=(v_{1},\ldots ,v_{n})$ belongs to $\R\vec{d}$.$\;$There exists $c\in 
\mathbb{R}_{>0}$ such that $u_{i}=cv_{i}\in ]0,M_{i}]$ for any $i=1,\ldots ,n
$. It then suffices to choose each $t_{i}$ so that $m_{i}(t_{i})=u_{i}$.
\end{proof}

\bigskip

In the sequel, we \emph{assume that }$m\in C$. Consider  a positive
root $\alpha$, decomposed as  $\alpha=\alpha_{i_{1}}+\cdots+\alpha_{i_{r}}$ on the basis of simple roots. Then $t^{[\alpha]}=t_{i_{1}}%
\cdots t_{i_{r}}.$ According to the fact that $m\in C$, we immediately
derive from the previous lemma that $0<t^{[\alpha]}<1$. In particular the
product 
\begin{equation}
\nabla=\prod_{\alpha\in R_{+}}\frac{1}{1-x^{-\alpha}}=\prod_{\alpha\in R_{+}}%
\frac{1}{1-t^{[\alpha]}}  \label{def_nabla}
\end{equation}
running on the (finite) set of positive roots is well-defined and finite.

\begin{proposition}
\label{prop-_imit-psi}Assume $m\in C$ and consider a sequence 
$(\lambda ^{(a)})_{a\in \mathbb{N}}$
of dominant
weights  such that $\lambda ^{(a)}=am+o(a).$
Then $\lim_{a\rightarrow +\infty }x^{-\lambda ^{(a)}}s_{\lambda
^{(a)}}(x)=\nabla .$
\end{proposition}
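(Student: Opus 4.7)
The plan is to attack this directly through the Weyl character formula \eqref{WVF}. Writing
\[
x^{-\lambda^{(a)}} s_{\lambda^{(a)}}(x) = \frac{\sum_{w \in W} \varepsilon(w)\, x^{w(\lambda^{(a)}+\rho) - \rho - \lambda^{(a)}}}{\prod_{\alpha \in R_+}(1 - x^{-\alpha})},
\]
the denominator is precisely $\nabla^{-1}$, so it suffices to show that the numerator tends to $1$ as $a \to +\infty$. The $w = \mathrm{id}$ contribution equals $x^{0} = 1$ for every $a$, hence I need to check that each of the finitely many remaining terms vanishes in the limit.

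Fix $w \in W$ with $w \neq \mathrm{id}$. Since $\lambda^{(a)} = am + o(a)$ with $m \in C$, the element $\lambda^{(a)} + \rho$ is strictly dominant for large $a$. A classical fact about the $W$-action on strictly dominant weights gives that
\[
\mu_w^{(a)} := (\lambda^{(a)} + \rho) - w(\lambda^{(a)} + \rho) = a\,(m - w(m)) + (\rho - w(\rho)) + o(a),
\]
where $m - w(m) \in Q_+ \setminus \{0\}$ (since $m$ lies in the open Weyl chamber, $m - w(m)$ is a nonzero nonnegative integer combination of simple roots). Consequently $\mu_w^{(a)} \in Q_+$ for large $a$ and, expanding on simple roots $\mu_w^{(a)} = \sum_i n_i^{(a)}\alpha_i$, at least one coefficient grows linearly in $a$ while none becomes negative.

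I then translate the exponent into a monomial in $t$: by the relation $x^{-\alpha_i} = t_i$, one has
\[
x^{w(\lambda^{(a)}+\rho) - \rho - \lambda^{(a)}} \;=\; x^{-\mu_w^{(a)}} \;=\; \prod_{i=1}^{n} t_i^{\,n_i^{(a)}}.
\]
By Lemma \ref{lemma_ti}, the hypothesis $m \in C$ implies $0 < t_i < 1$ for every $i$. Since at least one $n_i^{(a)}$ tends to $+\infty$ and the others stay bounded below by $0$, this product tends to $0$ as $a \to \infty$. Summing over the finite group $W$ yields $\sum_{w \in W} \varepsilon(w)\, x^{w(\lambda^{(a)}+\rho) - \rho - \lambda^{(a)}} \longrightarrow 1$, and dividing by the (constant in $a$) denominator gives the announced limit $\nabla$.

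The only subtle point is verifying that $m - w(m) \in Q_+ \setminus \{0\}$ for $w \neq \mathrm{id}$; everything else is bookkeeping with the Weyl character formula. This is the standard property that the orbit of a strictly dominant weight under $W$ lies strictly below it in the root-order, which I would either invoke directly from \cite{Bour} or derive in one line by induction on the length $\ell(w)$ using the simple reflections $s_i$ (each of which subtracts a positive multiple of $\alpha_i$ from a strictly dominant weight).
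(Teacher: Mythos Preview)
Your proof is correct and follows essentially the same route as the paper's: apply the Weyl character formula, isolate the identity term, and show each $w\neq\mathrm{id}$ contribution $x^{-\mu_w^{(a)}}=t^{[\mu_w^{(a)}]}$ tends to zero because $m-w(m)$ has nonnegative simple-root coefficients with at least one strictly positive and $0<t_i<1$. One small slip: $m$ is a real vector, not a lattice point, so $m-w(m)$ is a nonnegative \emph{real} (not integer) combination of simple roots---the paper is careful to write $m-w(m)=\sum m_i\alpha_i$ with $m_i\in\mathbb{R}_{\geq 0}$---but this does not affect your argument.
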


\begin{proof}
By the Weyl character formula, we have 
\begin{equation*}
s_{\lambda ^{(a)}}(x)=\nabla \sum_{w\in W}\varepsilon (w)x^{w(\lambda
^{(a)}+\rho )-\rho }.
\end{equation*}
This gives 
\begin{equation*}
x^{-\lambda ^{(a)}}s_{\lambda ^{(a)}}(x)=\nabla \sum_{w\in W}\varepsilon
(w)x^{w(\lambda ^{(a)}+\rho )-\lambda ^{(a)}-\rho }=\nabla \sum_{w\in
W}\varepsilon (w)t^{[\lambda ^{(a)}+\rho -w(\lambda ^{(a)}+\rho )]}.
\end{equation*}
For $w=1$, one gets   $\varepsilon (w)t^{[\lambda ^{(a)}+\rho -w(\lambda ^{(a)}+\rho
)]}=1$. So it suffices to prove that $\lim_{a\rightarrow +\infty
}t^{[\lambda ^{(a)}+\rho -w(\lambda ^{(a)}+\rho )]}=0$ for any $w\neq 1$.
Consider $w\neq 1$ and set 
\begin{equation*}
u(a)=\lambda ^{(a)}+\rho -w(\lambda ^{(a)}+\rho )=\lambda ^{(a)}-w(\lambda
^{(a)})+\rho -w(\rho ).
\end{equation*}
Since $m\in C$, the weight $\lambda ^{(a)}$ belongs to $C$ for $a$ large enough. In the
sequel, we can thus assume that $\lambda ^{(a)}+\rho \in C$.\ Its stabilizer
under the action of the Weyl group is then trivial. Now the weights of the
finite-dimensional representation $V(\lambda ^{(a)}+\rho )$ are stable under
the action of $W$. Thus $w(\lambda ^{(a)}+\rho )$ is a weight of $V(\lambda
^{(a)}+\rho )$. This implies that $w(\lambda ^{(a)}+\rho )\leq \lambda
^{(a)}+\rho ,$ that is $u(a)\in Q_{+}$ is a linear combination of simple
root with nonnegative coefficients. Since $\lambda ^{(a)}=am+o(a)$ and $\rho 
$ is fixed, we have 
\begin{equation*}
u(a)=a(m-w(m))+o(a)\in Q_{+}.
\end{equation*}
For any $w\neq 1,$ one gets  $m\neq w(m)$.\ We can set $m-w(m)=\sum_{i=1}^{n}m_{i}%
\alpha _{i}$ where the $m_{i}\ $belong to $\mathbb{R}_{\geq 0}$ for any $%
i=1,\ldots ,n$ and $m_{i_{0}}=\max_{i=1,\ldots ,n}\{m_{i}\}>0$. This gives $%
u(a)=a\sum_{i=1}^{n}m_{i}\alpha _{i}+o(a)$ and 
\begin{equation*}
t^{[\lambda ^{(a)}+\rho -w(\lambda ^{(a)}+\rho )]}=(t^{[m]})^{a}\times
t^{[o(a)]}\leq t_{i_{0}}^{am_{i_{0}}}\times t^{[o(a)]}
\end{equation*}
tends to $0$ when $a$ tends to infinity for $0<t_{i_{a}}<1$.
\end{proof}

\subsection{Random walks $\mathcal{W}$ with fixed drift}

Let $\delta $ be a minuscule representation.\ In that case, there is a
bijection between the paths from $0$ to $\lambda \in P_{+}$ and the highest
weight vertices of $B(\delta )^{\otimes \ell }$ of weight $\lambda $.\
Assume the probability distribution on $B(\delta )$ is such that $m\in C.\;$%
The following proposition shows that the probability distribution on $%
B(\delta )$ is completely determined by $m$ under the previous hypotheses.
This remark will not be needed in the sequel of the article.

\begin{proposition}
\label{Th_m_law}Assume $\delta $ is minuscule. If two probability
distributions on $B(\delta )$ satisfying the conditions imposed in \S\ \ref%
{subsec_def_pa} have the same drift $m\in C$, then they coincide.
\end{proposition}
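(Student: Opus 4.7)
The plan is to reduce the statement to showing that the map from the parameter $t=(t_1,\dots,t_n)\in(0,1)^n$ (which by \S\ref{subsec_def_pa} is in bijection with the admissible probability distributions on $B(\delta)$) to the drift $m\in C$ is injective. Set $u_i=\log t_i$. Since $\delta$ is minuscule, each weight space of $V(\delta)$ is one-dimensional, so the vertices $a\in B(\delta)$ are in bijection with the weights of $V(\delta)$; writing $\delta-\mathrm{wt}(a)=\sum_i(\mu_a)_i\alpha_i$ with $(\mu_a)_i\in\mathbb{N}$, the probability becomes
$$p_a=\frac{e^{\mu_a\cdot u}}{Z(u)},\qquad Z(u)=\sum_{a\in B(\delta)}e^{\mu_a\cdot u}.$$
Differentiating, the drift reads $m(u)=\delta-\sum_i\alpha_i\,\partial_{u_i}\log Z(u)$, and since the simple roots $\alpha_i$ are linearly independent (equivalently the Cartan matrix is invertible), specifying $m$ is equivalent to specifying the gradient $\nabla_u\log Z$.

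Next, I would invoke strict convexity. The function $\log Z$ is a log-sum-exp, hence convex, and its Hessian at any $u$ equals the covariance matrix of the vectors $\mu_a$ under the distribution $p(\cdot;u)$. This covariance is positive definite if and only if the family $\{\mu_a\mid a\in B(\delta)\}$ affinely spans $\mathbb{R}^n$. Via the affine isomorphism $\mu_a\leftrightarrow\mathrm{wt}(a)$ given by $\mathrm{wt}(a)=\delta-\sum_i(\mu_a)_i\alpha_i$, this is equivalent to asking that the weights of $V(\delta)$ affinely span the real weight space $\mathfrak{h}_\mathbb{R}^\ast$. If this holds, $\log Z$ is strictly convex on $\mathbb{R}^n$, whence $\nabla_u\log Z$ is injective, so $m$ determines $u$, hence $t$, hence the probability distribution.

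The only substantive point to check is therefore the affine spanning. Since $\delta$ is minuscule, the weights of $V(\delta)$ coincide with the Weyl group orbit $W\cdot\delta$; its centroid is fixed by $W$, hence equals $0$ because $W$ acts irreducibly (and nontrivially) on $\mathfrak{h}_\mathbb{R}^\ast$ for a simple Lie algebra $\mathfrak{g}$. Consequently the affine span of $W\cdot\delta$ passes through $0$ and thus coincides with its linear span, which is a nonzero $W$-stable subspace and therefore equals $\mathfrak{h}_\mathbb{R}^\ast$ by irreducibility. This geometric input, which is where minuscule is used beyond the weight-multiplicity reduction at the start, is the main (and only) non-routine step; everything else is a routine application of the fact that gradients of strictly convex functions are one-to-one.
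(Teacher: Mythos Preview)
Your proof is correct and takes a genuinely different route from the paper's.

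The paper argues as follows: pick a sequence $\lambda^{(\ell)}=\ell m+O(1)$ of dominant weights; since $\delta$ is minuscule, all paths of length $\ell$ from $0$ to $\lambda^{(\ell)}-\gamma$ in $\overline C$ have the same probability $x^{\lambda^{(\ell)}-\gamma}/s_\delta(x)^\ell$ and there are $f^\ell_{\lambda^{(\ell)}-\gamma,\delta}$ of them, so
\[
\frac{f^\ell_{\lambda^{(\ell)}-\gamma,\delta}\,x^{\lambda^{(\ell)}-\gamma}}{f^\ell_{\lambda^{(\ell)},\delta}\,x^{\lambda^{(\ell)}}}
=\frac{\mathbb P(\mathcal W_1\in\overline C,\dots,\mathcal W_\ell\in\overline C,\mathcal W_\ell=\lambda^{(\ell)}-\gamma)}
      {\mathbb P(\mathcal W_1\in\overline C,\dots,\mathcal W_\ell\in\overline C,\mathcal W_\ell=\lambda^{(\ell)})}.
\]
The quotient local limit theorem (Theorem~\ref{tll-q}) forces this ratio to tend to $1$, giving $\lim_\ell f^\ell_{\lambda^{(\ell)}-\gamma,\delta}/f^\ell_{\lambda^{(\ell)},\delta}=x^\gamma$. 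As the left side depends only on $m$, so does each $x^\gamma$, and hence so does $p_a=x^{\mathrm{wt}(a)}/s_\delta(x)$.

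Your argument instead recognises the family $\{p(\cdot\,;t)\}$ as an exponential family in the natural parameter $u=\log t$, identifies the drift with $\delta$ minus the gradient of the log--partition function (in the simple-root basis), and invokes strict convexity of $\log Z$ --- established via the irreducibility of the $W$-action on $\mathfrak h_{\mathbb R}^\ast$ to show that the weights $W\cdot\delta$ affinely span --- to conclude that the gradient map is injective. This is entirely elementary and does not touch the probabilistic machinery of Section~\ref{Sec_Renewal}. By contrast, the paper's route, while heavier, yields the explicit asymptotic identity $x^\gamma=\lim_\ell f^\ell_{\lambda^{(\ell)}-\gamma,\delta}/f^\ell_{\lambda^{(\ell)},\delta}$, which is of independent interest and prefigures Theorem~\ref{Th_Asympt}. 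One small remark on your write-up: when the paper works with $\mathfrak{gl}_n$ rather than $\mathfrak{sl}_n$, the centroid of $W\cdot\delta$ is not $0$ in the ambient $\mathbb R^n$; however, what you actually need is that the differences $w\delta-\delta$ span $Q_{\mathbb R}$ (the rank-$n$ space carrying the coordinates $\mu_a$), and your irreducibility argument gives exactly that once applied to $Q_{\mathbb R}$ rather than to the full ambient space.
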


\begin{proof}
Assume we have a probability distribution on $B(\delta )$ as in \S\ \ref%
{subsec_def_pa}.\ Consider $\lambda ^{(\ell )}=\ell m+O(1 )$ a sequence
of dominants weights which tends to infinity in the direction of the drift $m$.\ Let $%
\gamma \in P$. Since $\delta $ is minuscule, all the paths from $0$ to $%
\lambda ^{(\ell )}-\gamma $ (resp. to $\lambda ^{(\ell )}$) have the same
probability ; furthermore,  there exist $f_{\lambda ^{(\ell )}-\gamma ,\delta }^{\ell }$
(resp. $f_{\lambda ^{(\ell )},\delta }^{\ell }$) such paths by Proposition %
\ref{Prop_minuscule}. We thus have 
\begin{equation*}
\frac{f_{\lambda ^{(\ell )}-\gamma ,\delta }^{\ell }x^{\lambda ^{(\ell
)}-\gamma }}{f_{\lambda ^{(\ell )},\delta }^{\ell }x^{\lambda ^{(\ell )}}}=%
\frac{\mathbb{P}(\mathcal{W}_{1}\in \overline{C},\ldots ,\mathcal{W}_{\ell
}\in \overline{C},\mathcal{W}_{\ell }=\lambda ^{(\ell )}-\gamma )}{\mathbb{P}%
(\mathcal{W}_{1}\in \overline{C},\ldots ,\mathcal{W}_{\ell }\in \overline{C},%
\mathcal{W}_{\ell }=\lambda ^{(\ell )})}.
\end{equation*}
By Theorem \ref{tll-q}, we know that this quotient tends to $1$ when $\ell $
tends to infinity. This gives 
\begin{equation}
\lim_{\ell \rightarrow +\infty }\frac{f_{\lambda ^{(\ell )}-\gamma ,\delta
}^{\ell }}{f_{\lambda ^{(\ell )},\delta }^{\ell }}=x^{\gamma },
\label{lim_gamma}
\end{equation}
hence $x^{\gamma }$ is determined by $m$. Thus the unique probability
distribution with drift $m$ defined on $B(\delta )$ verifies $p_{a}=\frac{x^{%
\mathrm{wt}(a)}}{s_{\delta }(x)}$ where $x^{\mathrm{wt}(a)}$ and $s_{\delta
}(x)$ are given by (\ref{lim_gamma}).
\end{proof}

\subsection{Transition matrix of $\mathcal W$ conditioned to never
exit the Weyl chamber}

In this paragraph, we \emph{assume }$\delta $\emph{\ is minuscule} and $m\in
C$. Set $\Pi =\Pi _{\mathcal{W}}^{\overline{C}}$. Let us denote by $\Gamma $
the Green function associated to the substochastic matrix $\Pi $. Consider $%
\mu \in P_{+}$. For any $\lambda \in P$, we have $\Gamma (\mu ,\lambda
)=\sum_{\ell \geq 0}\Pi ^{\ell }(\mu ,\lambda ).$ Clearly, $\Gamma (\mu
,\lambda )=0$ if $\lambda \notin P_{+}$. We consider the Martin Kernel 
\begin{equation}
K(\mu ,\mathcal{W}_{\ell })=\frac{\Gamma (\mu ,\mathcal{W}_{\ell })}{\Gamma
(0,\mathcal{W}_{\ell })}  \label{limit}
\end{equation}%
defined almost surely for $\ell $ large enough.

In order to apply Theorem \ref{Th_Doob}, we want to prove that $K$ converges
almost surely to the harmonic function $\psi $ of \S\ \ref{subsec_Psi}.

Write $b_{\mu }$ for the highest weight vertex of $B(\mu ).$ For $\lambda
\in P_{+}$, let $B(\mu ,\ell ,\lambda )$ be the subset of vertices $%
b=a_{1}\otimes \cdots \otimes a_{\ell }$ in $B(\delta )^{\otimes \ell }$
such that 
\begin{equation*}
\mathrm{wt}(b_{\mu }\otimes a_{1}\otimes \cdots \otimes a_{k})\in P_{+}\text{
for any }k=1,\ldots ,\ell \text{ and }\mathrm{wt}(b_{\mu }\otimes b)=\lambda
.
\end{equation*}
By definition of $\Pi =\Pi _{\mathcal{W}}^{\overline{C}}$, and since $\delta 
$ is minuscule, we have 
\begin{equation*}
\Pi ^{\ell }(\mu ,\lambda )=\mathrm{card}(B(\mu ,\ell ,\lambda ))\frac{%
x^{\lambda -\mu }}{s_{\delta }(x)^{\ell }}.
\end{equation*}
Indeed, all the paths from $\mu $ to $\lambda $ of length $\ell $ have the
same probability $\frac{x^{\lambda -\mu }}{s_{\delta }(x)^{\ell }}$.\ By (%
\ref{def_f}), Theorem~\ref{TH_Ka} and Proposition \ref{Prop_minuscule}, we
know that 
\begin{equation*}
\mathrm{card}(B(\mu ,\ell ,\lambda ))=f_{\lambda /\mu }^{\ell }
\end{equation*}
and $f_{\lambda /\mu }^{\ell }$ is the number of highest weight vertices of
weight $\lambda $ in $B(\mu )\otimes B(\delta )^{\otimes \ell }$. According
to Lemma \ref{Lem_HWV}, they can be written as  $b_{\mu }\otimes b$
with $\varepsilon _{i}(b)\leq \varphi _{i}(b_{\mu })$ for any $i\in I$.

\bigskip

\noindent\textbf{Remark:} When $\delta$ is not minuscule, the vertices $%
b_{\mu}\otimes b$ with $b\in B(\lambda,\mu,\ell)$ are not necessarily of
highest weight and we can have $\mathrm{card}(B(\lambda,\mu,\ell
))>f_{\lambda/\mu}^{\ell}.$ A vertex of $B(\delta)^{\ell}$ which is not of
highest weight can yield a path in $\overline{C}$.

\bigskip

According to the Proposition \ref{Prop_dec_skew}, given any sequence $%
\lambda ^{(a)}$ of weights of the form $\lambda ^{(a)}=am+o(a)$, we can
write for $a$ large enough 
\begin{equation*}
\Gamma (\mu ,\lambda ^{(a)})=\sum_{\ell \geq 0}f_{\lambda ^{(a)}/\mu }^{\ell
}\frac{x^{\lambda ^{(a)}-\mu }}{s_{\delta }(x)^{\ell }}=x^{\lambda
^{(a)}-\mu }\sum_{\ell \geq 0}\frac{1}{s_{\delta }(x)^{\ell }}\sum_{\gamma
\in P}f_{\lambda ^{(a)}-\gamma }^{\ell }K_{\mu ,\gamma }=x^{\lambda
^{(a)}-\mu }\sum_{\gamma \in P}K_{\mu ,\gamma }\sum_{\ell \geq 0}\frac{%
f_{\lambda ^{(a)}-\gamma }^{\ell }}{s_{\delta }(x)^{\ell }}.
\end{equation*}
Since 
\begin{equation*}
\Gamma (0,\lambda ^{(a)})=x^{\lambda ^{(a)}}\sum_{\gamma \in P}K_{0,\gamma
}\sum_{\ell \geq 0}\frac{f_{\lambda ^{(a)}-\gamma }^{\ell }}{s_{\delta
}(x)^{\ell }}=\sum_{\ell \geq 0}f_{\lambda }^{\ell }\frac{x^{\lambda ^{(a)}}%
}{s_{\delta }(x)^{\ell }},
\end{equation*}
this yields for $a$ large enough 
\begin{equation*}
K(\mu ,\lambda ^{(a)})=x^{-\mu }\sum_{\gamma \in P}K_{\mu ,\gamma }\frac{%
\sum_{\ell \geq 0}\frac{f_{\lambda ^{(a)}-\gamma }^{\ell }}{s_{\delta
}(x)^{\ell }}}{\sum_{\ell \geq 0}\frac{f_{\lambda ^{(a)}}^{\ell }}{s_{\delta
}(x)^{\ell }}}=x^{-\mu }\sum_{\gamma \in P}K_{\mu ,\gamma }x^{\gamma }\frac{%
\sum_{\ell \geq 0}f_{\lambda ^{(a)}-\gamma }^{\ell }\frac{x^{\lambda
^{(a)}-\gamma }}{s_{\delta }(x)^{\ell }}}{\sum_{\ell \geq 0}f_{\lambda
^{(a)}}^{\ell }\frac{x^{\lambda ^{(a)}}}{s_{\delta }(x)^{\ell }}}.
\end{equation*}
Thus 
\begin{equation}
K(\mu ,\lambda ^{(a)})=x^{-\mu }\sum_{\gamma \text{ weight of }V(\mu
)}K_{\mu ,\gamma }x^{\gamma }\frac{\Gamma (0,\lambda ^{(a)}-\gamma )}{\Gamma
(0,\lambda ^{(a)})}.  \label{expand_K}
\end{equation}
Now we have the following proposition

\begin{proposition}
\label{Prop_quotient}Consider $\gamma \in P$ a fixed weight. Then, under the
previous assumptions on $m$%
\begin{equation*}
\lim_{a\rightarrow +\infty }\frac{\Gamma (0,\mathcal{W}_{a}-\gamma )}{\Gamma
(0,\mathcal{W}_{a})}=1\text{ (a.s).}
\end{equation*}
\end{proposition}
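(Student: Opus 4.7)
The plan is to reduce the claim directly to the quotient renewal theorem (Theorem~\ref{ren-q}), applied with the random sequence $g_{a}:=\mathcal{W}_{a}$ and the constant sequence $h_{a}:=-\gamma$. Indeed, by the very definition of $\Pi=\Pi_{\mathcal{W}}^{\overline{C}}$ and of the Green function, for any $\lambda\in P$
$$
\Gamma(0,\lambda)=\sum_{j\geq 0}\Pi^{j}(0,\lambda)=\mathbf{1}_{\lambda=0}+\sum_{j\geq 1}\mathbb{P}\bigl[\mathcal{W}_{1}\in\overline{C},\ldots,\mathcal{W}_{j}\in\overline{C},\,\mathcal{W}_{j}=\lambda\bigr],
$$
so (for $\mathcal{W}_{a}\neq 0$, which holds eventually a.s.\ since $m\in C$) the ratio $\Gamma(0,\mathcal{W}_{a}-\gamma)/\Gamma(0,\mathcal{W}_{a})$ is exactly of the form handled by Theorem~\ref{ren-q}.

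Next I would check the hypotheses. The increments of $\mathcal{W}$ are bounded because they are weights of $V(\delta)$, a finite subset of $P$; the drift $m$ belongs to the open convex subcone $C\subset\overline{C}$, which ensures both the positivity statement of Lemma~\ref{psc} and that $\mathcal{W}_{a}-\gamma\in\overline{C}$ eventually a.s.; finally $\|h_{a}\|=\|\gamma\|$ is trivially $o(a^{1/2})$. The only remaining hypothesis to verify is the almost sure polynomial control $\|g_{a}-am\|=\|\mathcal{W}_{a}-am\|=o(a^{\alpha})$ for some fixed $\alpha<2/3$.

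This is the delicate point and, in my view, the main obstacle. The strong law of large numbers only yields $\|\mathcal{W}_{a}-am\|=o(a)$ almost surely, which is far too weak. The natural tool is the law of the iterated logarithm applied to the centered i.i.d.\ bounded random vectors $X_{k}-m$, which gives
$$
\|\mathcal{W}_{a}-am\|=O\bigl(\sqrt{a\log\log a}\bigr)\quad\text{almost surely.}
$$
Hence, for any fixed $\alpha\in(1/2,2/3)$, one has $a^{-\alpha}\|\mathcal{W}_{a}-am\|\to 0$ almost surely. On the full-probability event where this holds, the deterministic Theorem~\ref{ren-q} applies and gives $\Gamma(0,\mathcal{W}_{a}-\gamma)\sim\Gamma(0,\mathcal{W}_{a})$, which is the announced limit. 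The exponent $2/3$ appearing in Theorems~\ref{tll-q} and~\ref{ren-q} is precisely what is needed here: it is comfortably larger than the LIL rate $1/2$, allowing the random position $\mathcal{W}_{a}$ to play the role of $g_{a}$.
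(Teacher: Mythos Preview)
Your proof is correct and follows the same route as the paper's: reduce the statement to a direct application of Theorem~\ref{ren-q}. Your explicit use of the law of the iterated logarithm to verify the hypothesis $\|\mathcal{W}_a - am\| = o(a^\alpha)$ for some $\alpha\in(1/2,2/3)$ is a genuine improvement in rigor, since the paper's two-line proof does not address this point (and the surrounding text only invokes the strong law of large numbers, which gives $o(a)$ and is insufficient for Theorem~\ref{ren-q} as stated).
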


\begin{proof}
The weights $\gamma $ run over the set of weights of $V(\mu )$. This set is
finite therefore the statement follows immediately by applying Theorem \ref%
{ren-q}.
\end{proof}

\bigskip

The strong law of large numbers states that $\mathcal{W}_{a}=am+o(a)$ almost
surely. With (\ref{expand_K}) this implies that almost surely, for $a$ large
enough 
\begin{equation*}
K(\mu ,\mathcal{W}_{a})=x^{-\mu }\sum_{\gamma \text{ weight of }V(\mu
)}K_{\mu ,\gamma }x^{\gamma }\frac{\Gamma (0,\mathcal{W}_{a}-\gamma )}{%
\Gamma (0,\mathcal{W}_{a})}\text{.}
\end{equation*}
Proposition \ref{Prop_quotient} then gives 
\begin{equation}
L=\lim_{a\rightarrow +\infty }K(\mu ,\mathcal{W}_{a})=x^{-\mu }\sum_{\gamma 
\text{ weight of }V(\mu )}K_{\mu ,\gamma }x^{\gamma }=x^{-\mu }s_{\mu
}(x)=\psi (\mu )\text{ (a.s).}  \label{K_tends_psi}
\end{equation}
that is, $L$ coincides with the harmonic function of Proposition \ref{Th_psi}%
. By Theorem \ref{Th_Doob}, there exists a constant $c$ such that $\psi =ch_{%
\overline{C}}$ where $h_{\overline{C}}$ is the harmonic function defined in 
\S\ \ref{sub_sec_Doobh} associated to the restriction of $(\mathcal{W}_{\ell
})_{\ell \geq 0}$ to the close Weyl chamber $\overline{C}$.\ By Theorems \ref%
{Th_psi} and \ref{Th_main}, we thus derive 
\begin{equation*}
\Pi _{h_{\overline{C}}}(\mu ,\lambda )=\Pi _{\psi }(\mu ,\lambda )=\Pi _{%
\mathcal{H}}(\mu ,\lambda )=\frac{m_{\mu ,\delta }^{\lambda }s_{\lambda }(x)%
}{s_{\delta }(x)s_{\mu }(x)}=\frac{s_{\lambda }(x)}{s_{\delta }(x)s_{\mu }(x)%
}
\end{equation*}
since $\delta $ is a minuscule representation (so that $m_{\mu ,\delta
}^{\lambda }=1$ for any $\lambda ,\mu \in P_{+}$ such that $\lambda -\mu $
is a weight of $B(\delta )$ (see \S\ \ref{subsec_minus})).

\begin{theorem}
\label{Th_coincide}Assume $\delta $ is a minuscule representation and $m\in
C $. Then the transition matrix of the Markov chain $\mathcal{H}$ is the
same as the transition matrix of $(\mathcal{W}_{\ell }^{\overline{C}%
})_{\ell \geq 0}$, which is the random walk $\mathcal W$ conditioned to never exit the cone $\overline{C}$. That is,
the corresponding transition probabilities are given by 
\begin{equation*}
\Pi _{\mathcal{H}}(\mu ,\lambda )=\frac{s_{\lambda }(x)}{s_{\delta
}(x)s_{\mu }(x)}\text{ for any }\lambda ,\mu \in P_{+}\text{ such that }%
\lambda -\mu \text{ is a weight of }B(\delta ).
\end{equation*}
\end{theorem}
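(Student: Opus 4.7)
The plan is to apply Doob's theorem (Theorem \ref{Th_Doob}) with the function $\psi(\mu)=x^{-\mu}s_\mu(x)$, which by Proposition \ref{Th_psi} is harmonic for $\Pi_{\mathcal{W}}^{\overline C}$ (using minusculeness of $\delta$) and satisfies $\Pi_{\mathcal{H}}=(\Pi_{\mathcal{W}}^{\overline C})_\psi$. If we can show that the Martin kernel $K(\mu,\mathcal{W}_\ell)$ converges almost surely to $\psi(\mu)$, then Theorem \ref{Th_Doob} forces $\psi=c\,h_{\overline C}$ for some $c>0$, where $h_{\overline C}(\lambda)=\mathbb{P}[\,\mathcal{W}_\ell\in\overline C\ \forall \ell\geq 1\mid \mathcal{W}_1=\lambda\,]$. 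Since the Doob transforms of $\Pi_{\mathcal{W}}^{\overline C}$ by $\psi$ and by $c\psi$ coincide, we obtain $\Pi_{\mathcal{H}}=(\Pi_{\mathcal{W}}^{\overline C})_{h_{\overline C}}$, which is exactly the transition matrix of $\mathcal{W}$ conditioned to stay forever in $\overline C$.

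The first step is the algebraic expansion of $K(\mu,\lambda^{(a)})$ obtained from Proposition \ref{Prop_dec_skew}: writing $\Gamma(\mu,\lambda^{(a)})$ as a sum of $\Gamma(0,\lambda^{(a)}-\gamma)$ weighted by $K_{\mu,\gamma}x^{\mathrm{wt}\text{-shift}}$ and normalizing by $\Gamma(0,\lambda^{(a)})$, one gets the identity
\begin{equation*}
K(\mu,\lambda^{(a)})=x^{-\mu}\sum_{\gamma\text{ weight of }V(\mu)}K_{\mu,\gamma}\,x^{\gamma}\,\frac{\Gamma(0,\lambda^{(a)}-\gamma)}{\Gamma(0,\lambda^{(a)})}.
\end{equation*}
The second step is the probabilistic core: apply the quotient renewal theorem (Theorem \ref{ren-q}) to the random walk $\mathcal{W}$, whose increments are bounded (they are weights of $V(\delta)$), live in the lattice $G$ generated by these weights, and have drift $m\in C$. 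For each fixed weight $\gamma$ of $V(\mu)$, taking $g_\ell=\mathcal{W}_\ell$ (which satisfies $\mathcal{W}_\ell=\ell m+o(\ell)$ a.s.\ by the strong law of large numbers, so in particular $\|\mathcal{W}_\ell-\ell m\|=o(\ell^{\alpha})$ a.s.\ for suitable $\alpha<2/3$ after a truncation argument on a set of full measure) and $h_\ell=-\gamma$, Theorem \ref{ren-q} gives
\begin{equation*}
\frac{\Gamma(0,\mathcal{W}_a-\gamma)}{\Gamma(0,\mathcal{W}_a)}\xrightarrow[a\to+\infty]{}1\quad\text{a.s.}
\end{equation*}
Summing over the finitely many weights $\gamma$ of $V(\mu)$ yields $K(\mu,\mathcal{W}_a)\to x^{-\mu}\sum_\gamma K_{\mu,\gamma}x^{\gamma}=x^{-\mu}s_\mu(x)=\psi(\mu)$ almost surely.

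The main obstacle is the almost sure convergence of the ratio $\Gamma(0,\mathcal{W}_a-\gamma)/\Gamma(0,\mathcal{W}_a)$: the quotient renewal theorem is a deterministic statement about sequences $(g_\ell)$ with controlled large-deviation behavior, so one must verify that the random sequence $(\mathcal{W}_\ell)$ satisfies its hypotheses on an event of full probability. This is handled by combining the strong law of large numbers with the fact that increments are bounded, so that $\|\mathcal{W}_\ell-\ell m\|$ grows slower than $\ell^\alpha$ for $\alpha\in(1/2,2/3)$ almost surely (after a standard Borel--Cantelli / large-deviations truncation). Once this is granted, Doob's theorem applies along the almost sure trajectory and the identification $\psi=c\,h_{\overline C}$ follows; Proposition \ref{Th_psi} then delivers the claimed formula $\Pi_{\mathcal{H}}(\mu,\lambda)=s_\lambda(x)/(s_\delta(x)s_\mu(x))$ for $\lambda,\mu\in P_+$ with $\lambda-\mu$ a weight of $V(\delta)$.
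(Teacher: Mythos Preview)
Your proof is correct and follows the same route as the paper: expand the Martin kernel via Proposition~\ref{Prop_dec_skew} into a finite sum of ratios $\Gamma(0,\lambda^{(a)}-\gamma)/\Gamma(0,\lambda^{(a)})$, apply the quotient renewal theorem (Theorem~\ref{ren-q}) along the almost sure trajectory $\mathcal{W}_a$ to send each ratio to $1$, deduce $K(\mu,\mathcal{W}_a)\to\psi(\mu)$, and conclude via Doob's theorem and Proposition~\ref{Th_psi}. You are in fact slightly more careful than the paper about one point: Theorem~\ref{ren-q} requires $\|g_\ell-\ell m\|=o(\ell^\alpha)$ for some $\alpha<2/3$, which the bare strong law does not give, and you correctly note that boundedness of the increments plus a Borel--Cantelli/large-deviations argument (or the law of the iterated logarithm) upgrades the a.s.\ rate to $O(\sqrt{\ell\log\log\ell})=o(\ell^\alpha)$ for any $\alpha>1/2$.
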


\begin{corollary}
\label{Cor_StayinC} With the above notation and assumptions we have for any $%
\lambda \in P_{+}$%
\begin{equation*}
\mathbb{P}_{\lambda }(\mathcal{W}_{\ell }\in \overline{C}\text{ for any }%
\ell \geq 1)=x^{-\lambda }s_{\lambda }(x)\prod_{\alpha \in
R_{+}}(1-x^{-\alpha }).
\end{equation*}
\end{corollary}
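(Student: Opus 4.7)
The plan is to exploit the relation between the two harmonic functions appearing in Section \ref{Sec_restric}: the explicit function $\psi(\lambda)=x^{-\lambda}s_{\lambda}(x)$ from Proposition \ref{Th_psi}, and the probabilistic function
\[
h_{\overline{C}}(\lambda)=\mathbb{P}_{\lambda}(\mathcal{W}_{\ell}\in \overline{C}\text{ for any }\ell\geq 1),
\]
which is exactly the quantity we want to compute. Both are strictly positive harmonic functions for the substochastic matrix $\Pi_{\mathcal{W}}^{\overline{C}}$, so the work reduces to determining the proportionality constant between them.

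First, I would recall from the argument proving Theorem \ref{Th_coincide} that Doob's Theorem \ref{Th_Doob} has already been applied in this setting: the Martin kernel $K(\mu,\mathcal{W}_a)$ was shown in \eqref{K_tends_psi} to converge almost surely to $\psi(\mu)$, and Doob's theorem then gives a constant $c>0$ with $\psi = c\cdot h_{\overline{C}}$. Hence
\[
h_{\overline{C}}(\lambda)=c^{-1}\,x^{-\lambda}s_{\lambda}(x)\qquad\text{for every }\lambda\in P_{+},
\]
and it only remains to identify $c$.

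To identify $c$, I would evaluate both sides along a sequence of dominant weights $\lambda^{(a)}=am+o(a)$ going to infinity in the drift direction and take the limit $a\to+\infty$. For the right-hand side, Proposition \ref{prop-_imit-psi} directly yields
\[
\lim_{a\to+\infty}\psi(\lambda^{(a)})=\nabla=\prod_{\alpha\in R_+}\frac{1}{1-x^{-\alpha}}.
\]
For the left-hand side, I claim that $h_{\overline{C}}(\lambda^{(a)})\to 1$. Starting from $\lambda^{(a)}$, one has $\mathcal{W}_{\ell}=\lambda^{(a)}+S_{\ell}$ where $S_{\ell}$ is a bounded-increment random walk with drift $m\in C$. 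Since $m$ lies in the open cone $C$, the distance from $\lambda^{(a)}+\ell m$ to $\partial\overline{C}$ is at least $c_0(a+\ell)$ for some $c_0>0$, so the event $\mathcal{W}_{\ell}\notin\overline{C}$ forces $\|S_{\ell}-\ell m\|\geq c_0(a+\ell)$. The Cramér–Chernoff large deviations inequality then gives an estimate of the form $\exp(-c_1(a+\ell))$, and summing a geometric series in $\ell$ yields $\mathbb{P}_{\lambda^{(a)}}(\exists \ell,\ \mathcal{W}_{\ell}\notin\overline{C})\leq C\,e^{-c_1 a}\to 0$. This part is the main (but still routine) technical step, essentially the same large-deviations argument already used in the proofs of Theorems \ref{tll-q} and \ref{ren-q}.

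Combining the two limits gives $c=\nabla$, and therefore
\[
h_{\overline{C}}(\lambda)=\nabla^{-1}\psi(\lambda)=x^{-\lambda}s_{\lambda}(x)\prod_{\alpha\in R_+}(1-x^{-\alpha}),
\]
which is exactly the stated formula. The only real obstacle is the large-deviations step establishing $h_{\overline{C}}(\lambda^{(a)})\to 1$; the rest is a direct assembly of Propositions \ref{Th_psi}, \ref{prop-_imit-psi} and the Doob-type identification carried out just above the corollary.
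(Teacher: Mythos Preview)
Your proposal is correct and follows essentially the same route as the paper: use the proportionality $\psi=c\,h_{\overline{C}}$ already obtained via Theorem~\ref{Th_Doob}, then pin down the constant by sending $\lambda^{(a)}=am+o(a)$ to infinity and invoking Proposition~\ref{prop-_imit-psi} for $\psi(\lambda^{(a)})\to\nabla$ together with $h_{\overline{C}}(\lambda^{(a)})\to 1$. The paper's proof is identical in structure but simply asserts the latter limit in one line (writing $\mathbb{P}_{\lambda^{(a)}}(\cdot)=\mathbb{P}_0(\mathcal{W}_\ell+\lambda^{(a)}\in\overline{C}\ \forall\ell)\to 1$), whereas you spell out the Cram\'er--Chernoff justification explicitly.
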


\begin{proof}
Recall that the function $\lambda \longmapsto \mathbb{P}_{\lambda }(\mathcal{%
W}_{\ell }\in \overline{C}$ for any $\ell \geq 1)$ is harmonic. By Theorem~
\ref{Th_coincide}, there is a positive constant $c$ such that $\mathbb{P}%
_{\lambda }(\mathcal{W}_{\ell }\in \overline{C}$ for any $\ell \geq
1)=cx^{-\lambda }s_{\lambda }(x)$. Now, for any sequence $\lambda
^{(a)}=am+o(a)$ of dominant weights 
\begin{equation*}
\lim_{a\rightarrow +\infty }\mathbb{P}_{\lambda ^{(a)}}(\mathcal{W}_{\ell
}\in \overline{C}\text{ for any }\ell \geq 1)=\lim_{a\rightarrow +\infty }%
\mathbb{P}_{0}(\mathcal{W}_{\ell }+\lambda ^{(a)}\in \overline{C}\text{ for
any }\ell \geq 1)=1.
\end{equation*}
On the other hand, we know by Proposition \ref{prop-_imit-psi} that $%
\lim_{a\rightarrow +\infty }x^{-\lambda }s_{\lambda }(x)=\nabla $ (see (\ref%
{def_nabla})). Therefore $c=\frac{1}{\nabla }$ and we are done.
\end{proof}

\begin{examples}
Consider the random walk $\mathcal{W}_{\ell }$ in $\mathbb{Z}^{n},$ with
steps from one point to one of the four nearest neighbors, and the condition 
$p_{a_{i}}p_{a_{\overline{i}}}$ independent of $i$ (see Example \ref%
{exa_vect}). Let $\nu \in P_{+}$. Let us compute $\mathbb{P}_{\nu }(\mathcal{%
W}_{\ell }\in \overline{C}$ for any $\ell \geq 1)$ for 
\begin{eqnarray*}
\overline{C} &=&\{\lambda \mid \lambda _{1}\geq \cdots \geq \lambda
_{n-1}\geq \lambda _{n}\geq 0,\lambda _{i}\in \mathbb{Z\}}\text{ and} \\
\overline{C} &=&\{\lambda \mid \lambda _{1}\geq \cdots \geq \lambda
_{n-1}\geq \left| \lambda _{n}\right| \geq 0,\lambda _{i}\in \mathbb{Z\cup }%
\frac{1}{2}\mathbb{Z\}}
\end{eqnarray*}
that is for the Weyl chambers of types $C_{n}$ and $D_{n}$.

\begin{enumerate}
\item In type $C_{n}$, the process $\mathcal{W}_{\ell }$ is obtained from $%
B(\omega _{1})$. The simple roots are the $\alpha _{i}=\varepsilon
_{i}-\varepsilon _{i+1},i=1,\ldots ,n-1$ and $\alpha _{n}=2\varepsilon _{n},$
we obtain ${x_{i}=\frac{1}{t_{i}\cdots t_{n-1}\sqrt{t_{n}}}}$ for any $%
i=1,\ldots n-1$ and $x_{n}=\frac{1}{\sqrt{t_{n}}}.$ The positive roots are $%
\varepsilon _{i}\pm \varepsilon _{i}$ with $1\leq i<j\leq n$ and $%
2\varepsilon _{i}$ with $i=1,\ldots ,n$. The desired probability is
therefore 
\begin{equation*}
\mathbb{P}_{\nu }(\mathcal{W}_{\ell }\in \overline{C}\text{ for any }\ell
\geq 1)=x^{-\nu }s_{\nu }^{C_{n}}(x)\prod_{1\leq i<j\leq n}(1-\frac{x_{j}}{%
x_{i}})(1-\frac{1}{x_{i}x_{j}})\prod_{1\leq i\leq n}(1-\frac{1}{x_{i}^{2}})
\end{equation*}
where $s_{\nu }^{C_{n}}(x)$ is the Weyl character of type $C_{n}$ associated
to $\nu $ specialized in $x_{1},\ldots ,x_{n}$.

\item In type $D_{n}$, the process $\mathcal{W}_{\ell }$ is also obtained
from $B(\omega _{1})$. The simple roots are the $\alpha _{i}=\varepsilon
_{i}-\varepsilon _{i+1},i=1,\ldots ,n-1$ and $\alpha _{n}=\varepsilon
_{n-1}+\varepsilon _{n}$. We obtain ${x_{i}=\frac{1}{t_{i}\cdots t_{n-2}%
\sqrt{t_{n-1}t_{n}}}}$ for any $i=1,\ldots ,n-2,$ $x_{n-1}=\frac{1}{\sqrt{%
t_{n-1}t_{n}}}$ and $x_{n}=\sqrt{\frac{t_{n-1}}{t_{n}}}.$ The positive roots
are $\varepsilon _{i}\pm \varepsilon _{i}$ with $1\leq i<j\leq n$. 
\begin{equation*}
\mathbb{P}_{\nu }(\mathcal{W}_{\ell }\in \overline{C}\text{ for any }\ell
\geq 1)=x^{-\nu }s_{\nu }^{D_{n}}(x)\prod_{1\leq i<j\leq n}(1-\frac{x_{j}}{%
x_{i}})(1-\frac{1}{x_{i}x_{j}})
\end{equation*}
where $s_{\nu }^{D_{n}}(x)$ is the Weyl character of type $D_{n}$ associated
to $\nu $ specialized in $x_{1},\ldots ,x_{n}$.
\end{enumerate}
\end{examples}

\section{Complementary results}

\label{Sec_miscell}

\subsection{Asymptotic behavior of the coefficients $f_{\protect\lambda/%
\protect\mu,\protect\delta }^{\ell}$}

In \cite{St}, Stanley studied the asymptotic behavior of $f_{\lambda /\mu
,\delta }^{\ell }$ when $V(\delta )$ is the defining representation of $%
\mathfrak{gl}_{n}$ (i.e. $\delta =\omega _{1}$ is associated to the
classical ballot problem).\ More precisely, he established that for any
fixed $\mu \in P_{+}$ and any direction $\vec{d}$ in $C$ 
\begin{equation}
\lim_{\ell \rightarrow \infty }\frac{f_{\lambda ^{(\ell )}/\mu ,\omega
_{1}}^{\ell }}{f_{\lambda ^{(\ell )},\omega _{1}}^{\ell }}=s_{\mu
}(m)  \label{Stan}
\end{equation}%
where $m=(m_{1},\ldots ,m_{n})\in d$ is such that $m_{i}\geq 0$ for any $%
i=1,\ldots n,$ the sum $m_{1}+\cdots +m_{n}$ equals $1$ and $\lambda ^{(\ell )}=\ell m+o(\ell
)$ tends to $\infty $ in the direction $\vec{d}$.

By the previous theorem, one may extend this result as follows.\ Assume $\vec d$
is a direction in $C$. By $2$ of Lemma \ref{lemma_ti}, there exists a $n$%
-tuple $t=(t_{1},\ldots ,t_{n})$ with $0<t_{i}<1$ such that $\vec d$ is the
direction of the drift $m\in C$ associated to the random walk $\mathcal{W}$
defined from $t$ as in \S\ \ref{subsec_paths_crys}.

\begin{theorem}
\label{Th_Asympt}
Assume $\delta $ is minuscule. If  $\lambda ^{(\ell )}=\ell m+o(\ell^\alpha)$ with $\alpha<2/3$, then \begin{equation}
\lim_{\ell \rightarrow \infty }\frac{f_{\lambda ^{(\ell )}/\mu ,\delta
}^{\ell }}{f_{\lambda ^{(\ell )},\delta }^{\ell }}=s_{\mu }(x).
\label{asympt}
\end{equation}%

\end{theorem}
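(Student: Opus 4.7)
The strategy is to reduce the statement to the quotient local limit theorem (Theorem \ref{tll-q}) via the skew decomposition of Proposition \ref{Prop_dec_skew} and the identification, valid in the minuscule case, between counts of paths in the Weyl chamber and probabilities of the conditioned random walk.

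First I would apply Proposition \ref{Prop_dec_skew}(2): for $\ell$ large enough,
\begin{equation*}
f_{\lambda^{(\ell)}/\mu,\delta}^{\ell}=\sum_{\gamma\in P}K_{\mu,\gamma}\,f_{\lambda^{(\ell)}-\gamma,\delta}^{\ell},
\end{equation*}
so that it suffices to establish, for each fixed weight $\gamma$ of $V(\mu)$,
\begin{equation*}
\lim_{\ell\to\infty}\frac{f_{\lambda^{(\ell)}-\gamma,\delta}^{\ell}}{f_{\lambda^{(\ell)},\delta}^{\ell}}=x^{\gamma};
\end{equation*}
then summing (the sum being finite since $V(\mu)$ is finite-dimensional) yields $\sum_\gamma K_{\mu,\gamma}x^\gamma=s_\mu(x)$.

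Next I would convert the ratio of tensor multiplicities into a ratio of probabilities for $\mathcal{W}$. Since $\delta$ is minuscule, Proposition \ref{Prop_minuscule} identifies highest weight vertices of $B(\delta)^{\otimes\ell}$ of weight $\nu$ with paths in $Z^{+}(\delta,\ell)$ ending at $\nu$, and all such paths carry the same probability $x^{\nu}/s_\delta(x)^\ell$ (see the remark after (\ref{def-S(l,p)})). Hence
\begin{equation*}
\mathbb{P}\left[\mathcal{W}_{1}\in\overline{C},\ldots,\mathcal{W}_{\ell}\in\overline{C},\ \mathcal{W}_{\ell}=\nu\right]=f_{\nu,\delta}^{\ell}\,\frac{x^{\nu}}{s_{\delta}(x)^{\ell}},
\end{equation*}
which gives
\begin{equation*}
\frac{f_{\lambda^{(\ell)}-\gamma,\delta}^{\ell}}{f_{\lambda^{(\ell)},\delta}^{\ell}}=x^{\gamma}\cdot\frac{\mathbb{P}\left[\mathcal{W}_{1}\in\overline{C},\ldots,\mathcal{W}_{\ell}\in\overline{C},\ \mathcal{W}_{\ell}=\lambda^{(\ell)}-\gamma\right]}{\mathbb{P}\left[\mathcal{W}_{1}\in\overline{C},\ldots,\mathcal{W}_{\ell}\in\overline{C},\ \mathcal{W}_{\ell}=\lambda^{(\ell)}\right]}.
\end{equation*}

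Finally, I would invoke Theorem \ref{tll-q} with $g_{\ell}=\lambda^{(\ell)}$ and the constant sequence $h_{\ell}=-\gamma$. The hypotheses are satisfied: the increments of $\mathcal{W}$ are bounded (they take values in the finite set of weights of $V(\delta)$); by assumption $\ell^{-\alpha}\|\lambda^{(\ell)}-\ell m\|\to 0$ with $\alpha<2/3$; and $\ell^{-1/2}\|h_{\ell}\|=\ell^{-1/2}\|\gamma\|\to 0$ trivially. The theorem yields that the ratio of conditioned probabilities tends to $1$, so the displayed quotient tends to $x^{\gamma}$, completing the argument.

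The only delicate point, and the main conceptual step, is the passage through Proposition \ref{Prop_dec_skew}(2), which transforms a skew tensor multiplicity into a finite linear combination of straight tensor multiplicities with coefficients $K_{\mu,\gamma}$: once this is done, the minuscule hypothesis on $\delta$ (ensuring path-probability is completely determined by the endpoint) and the uniformity in the local limit theorem do the rest. There is no genuine obstacle beyond verifying that the hypotheses of Theorem \ref{tll-q} apply uniformly in the (finitely many) weights $\gamma$ of $V(\mu)$.
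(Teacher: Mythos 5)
Your proof is correct and follows essentially the same route as the paper: decompose via Proposition \ref{Prop_dec_skew}(2), rewrite each ratio $f_{\lambda^{(\ell)}-\gamma,\delta}^{\ell}/f_{\lambda^{(\ell)},\delta}^{\ell}$ as a quotient of conditioned-path probabilities using the minuscule identification of Proposition \ref{Prop_minuscule}, and apply the quotient local limit Theorem \ref{tll-q} with $h_\ell=-\gamma$. The only difference is cosmetic: you spell out the verification of the hypotheses of Theorem \ref{tll-q} a bit more explicitly than the paper does, which is a small improvement.
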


\begin{proof}
Consider $\lambda ^{(\ell )}=\ell m+o(\ell )$ a sequence of dominants
weights which tends to infinity in the direction $\vec{d}$.\ By Proposition \ref%
{Prop_dec_skew} 
\begin{equation}
\frac{f_{\lambda ^{(\ell )}/\mu ,\delta }^{\ell }}{f_{\lambda ^{(\ell
)},\delta }^{\ell }}=\sum_{\gamma \in P}K_{\mu ,\gamma }\frac{f_{\lambda
^{(\ell )}-\gamma ,\delta }^{\ell }}{f_{\lambda ^{(\ell )},\delta }^{\ell }}%
=\sum_{\gamma \in P}K_{\mu ,\gamma }\frac{f_{\lambda ^{(\ell )}-\gamma
,\delta }^{\ell }x^{\lambda ^{(\ell )}-\gamma }}{f_{\lambda ^{(\ell
)},\delta }^{\ell }x^{\lambda ^{(\ell )}}}x^{\gamma }  \label{quo-f}
\end{equation}
where the sums are finite since the set of weight in $V(\mu )$ is finite.
Note that, for any $\gamma \in P$%
\begin{equation*}
\frac{f_{\lambda ^{(\ell )}-\gamma ,\delta }^{\ell }x^{\lambda ^{(\ell
)}-\gamma }}{f_{\lambda ^{(\ell )},\delta }^{\ell }x^{\lambda ^{(\ell )}}}=%
\frac{\mathbb{P}(\mathcal{W}_{1}\in \overline{C},\ldots ,\mathcal{W}_{\ell
}\in \overline{C},\mathcal{W}_{\ell }=\lambda ^{(\ell )}-\gamma )}{\mathbb{P}%
(\mathcal{W}_{1}\in \overline{C},\ldots ,\mathcal{W}_{\ell }\in \overline{C},%
\mathcal{W}_{\ell }=\lambda ^{(\ell )})}.
\end{equation*}
By Theorem \ref{tll-q}, we know that this quotient tends to $1$ when $\ell $
tends to infinity. This implies 
\begin{equation*}
\lim_{\ell \rightarrow +\infty }\frac{f_{\lambda ^{(\ell )}/\mu ,\delta
}^{\ell }}{f_{\lambda ^{(\ell )},\delta }^{\ell }}=\sum_{\gamma \in P}K_{\mu
,\gamma }x^{\gamma }=s_{\mu }(x)
\end{equation*}
as announced.
\end{proof}

\bigskip

\noindent\textbf{Remark:} When $\delta=\omega_{1}$ in type $A_{n-1}$ (ballot
problem), we have $p_{i}=\frac{x_{i}}{s_{\delta}(x)}$. Then $m=\frac {1}{%
s_{\delta}(x)}\sum_{i=1}^{n}x_{i}\varepsilon_{i}$. We can normalize the law
so that $s_{\delta}(x)=1$ (it suffices to replace each $x_{i}$ by $\frac{%
x_{i}}{s_{\delta}(x)}$). Then $p_{i}=m_{i}=x_{i}$. This shows that our
theorem can indeed be regarded as a generalization of (\ref{Stan}). Finally
note that the proof of (\ref{Stan}) mainly uses the representation theory of
the symmetric group. It seems nevertheless difficult to obtain a purely
algebraic proof of the limits (\ref{asympt}).

\subsection{Random walks defined from non irreducible representations}

In Section \ref{Sec_MarkovCains}, we have defined the random walk $(\mathcal{%
W}_{\ell})_{\ell\geq0}$ starting from the crystal $B(\delta)$ of the
irreducible module $V_{q}(\delta).\;$In fact, most of our results can be
easily adapted to the case where $(\mathcal{W}_{\ell})_{\ell\geq0}$ is
defined from the crystal $B(M)$ of a $U_{q}(\mathfrak{g})$-module $M$
possibly non irreducible. These random walks are therefore based on tensor
products of non irreducible representations.\ Such products will also
provide us with random walks for which the conditioned law to never exit $%
\overline{C}$ can be made explicit in terms of characters.

To do this, we consider similarly $t_{1},\ldots ,t_{n}$ some positive real
numbers and $x_{1},\ldots ,x_{N}$ such that $x^{\alpha _{i}}=t_{i}^{-1}.$
This permits to define a probability distribution on $B(M)$, setting $p_{a}=%
\frac{x^{\mathrm{wt}(a)}}{s_{M}(x)}$ where $s_{M}$ is the character of $M$,
that is the sum of the characters of its irreducible components. The random
variable $X$ on $B(M)$ is such that $X(a)=\mathrm{wt}(a)$ for any $a\in
B(M). $ The product probability distribution on $B(M)^{\otimes \ell }$
verifies $p_{b}=\frac{x^{\mathrm{wt}(b)}}{s_{M}(x)^{\ell }}$ for any $b\in
B(\delta )^{\otimes \ell }$. We introduce similarly $\Omega (M)$ as the
projective limit of the tensor products $B(M)^{\otimes \ell }$ and the
Markov chain $\mathcal{W=}(\mathcal{W}_{\ell })_{\ell \geq 0}$ such that $%
\mathcal{W}_{\ell }(b)=\mathrm{wt}(b^{(\ell )})$ for any $\ell \geq 0$. Its
transition matrix verifies 
\begin{equation*}
\Pi _{\mathcal{W}}(\beta ,\beta ^{\prime })=K_{M,\beta ^{\prime }-\beta }%
\frac{x^{\beta ^{\prime }-\beta }}{s_{M}(x)}
\end{equation*}
where $K_{M,\beta ^{\prime }-\beta }$ is the dimension of the weight space $%
\beta ^{\prime }-\beta $ in $M$.\ Write $M=\oplus _{\nu \in
P_{+}}V_{q}^{\oplus m_{\nu }}(\nu )$ for the decomposition of $M$ into its
irreducible components. Then the set of increments for the Markov chain
corresponding to $M$ is the union of the sets of transition for the Markov
chains corresponding to each $V(\nu )$ with $m_{\nu }>0$. Observe that the
Markov chain obtained for an isotypical representation $M=V_{q}^{\oplus
m_{\nu }}(\nu )$ is the same as the Markov chain for $V(\nu )$. Nevertheless
when $M$ admits at least two non isomorphic irreducible components, the
matrix $\Pi _{\mathcal{W}}$ depends on the multiplicities $m_{\nu }$.

One can consider the random variable $\mathcal{H}_{\ell }$ on $\Omega (M)$
such that $\mathcal{H}_{\ell }(b)=\mathrm{wt}(\mathfrak{P}(b(\ell)))$ for any $b\in
\Omega (M),$ that is $\mathcal{H}_{\ell }(b)$ is the highest weight of $B(b)$%
, the connected component of $B(M)^{\otimes \ell }$ containing $b^{(\ell )}$%
.\ In the proof of Theorem \ref{Th_main}, we do not use the fact that $%
B(\delta )$ is connected (or equivalently that $V_{q}(\delta )$ is
irreducible). Then the same proof shows that $\mathcal{H}=(\mathcal{H}_{\ell
})_{\ell \geq 0}$ is yet a Markov chain with transition probabilities 
\begin{equation*}
\Pi _{\mathcal{H}}(\mu ,\lambda )=\frac{m_{\mu ,M}^{\lambda }s_{\lambda }(x)%
}{s_{M}(x)s_{\mu }(x)}
\end{equation*}
where $m_{\mu ,M}^{\lambda }$ is the multiplicity of $V_{q}(\lambda )$ in $%
V_{q}(\mu )\otimes M$.

In order to obtain an analogue of Proposition \ref{Th_psi}, we will say that 
$M$ is of \emph{minuscule type} if all its irreducible components are
minuscule representations and $\dim M_{\mu }\in \{0,1\}$ for any weight $\mu 
$. In particular, the nonzero multiplicities in the decomposition $M=\oplus
_{\nu \in P_{+}}V_{q}(\nu )$ in irreducible are equal to $1$. When $M$ is of
minuscule type, we thus have $K_{M,\lambda -\mu }=m_{\mu ,M}^{\lambda }\in
\{0,1\}$ for any $\lambda ,\mu \in P_{+}$ since $K_{\nu ,\lambda -\mu
}=m_{\mu ,\nu }^{\lambda }$ for any $\nu $ and the irreducible components $%
V_{q}(\nu )$ have no common weight. We give below the table of the possible
non irreducible minuscule type representations for each root system. For a
table of the minuscule representations see \S\ \ref{subsec_minus}. 
\begin{equation*}
\begin{tabular}{|l|l|}
\hline
type & non irreducible minuscule type representations \\ \hline
$A_{n}$ & $\oplus _{j\in J}V(\omega _{j})$ for $J\subset \{1,\ldots ,n-1\}$ and $\left|
J\right| >1$ \\ \hline
$D_{n}$ & $\oplus _{j\in J}V(\omega _{j})$ for $J\subset \{1,n-1,n\}$ and $\left|
J\right| >1$ \\ \hline
$E_{6}$ & $V(\omega _{1})\oplus V(\omega _{6}).$ \\ \hline
\end{tabular}%
\end{equation*}

\begin{theorem}
Assume $M$ is of \emph{minuscule type}. Then, the transition matrix $\Pi _{%
\mathcal{H}}$ can be realized as a $\psi $-transform of the substochastic
matrix $\Pi _{\mathcal{W}}^{\overline{C}}$ where $\psi $ is the harmonic
function defined by $\psi (\lambda )=x^{-\lambda }s_{\lambda }(x)$ for any $%
\lambda \in P_{+}.$
\end{theorem}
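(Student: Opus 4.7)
The plan is to adapt the proof of Proposition \ref{Th_psi} verbatim, with the irreducible module $V(\delta)$ replaced by the module $M$ of minuscule type. From the formulas already recorded just before the statement, for $\mu,\lambda \in P_+$ one has
\begin{equation*}
\Pi_{\mathcal{W}}^{\overline{C}}(\mu,\lambda)=K_{M,\lambda-\mu}\,\frac{x^{\lambda-\mu}}{s_M(x)},
\qquad
\Pi_{\mathcal{H}}(\mu,\lambda)=\frac{m_{\mu,M}^{\lambda}\,s_\lambda(x)}{s_M(x)\,s_\mu(x)}.
\end{equation*}
Writing $\psi(\lambda)=x^{-\lambda}s_\lambda(x)$, the relation $\Pi_{\mathcal{H}}(\mu,\lambda)=\frac{\psi(\lambda)}{\psi(\mu)}\,\Pi_{\mathcal{W}}^{\overline{C}}(\mu,\lambda)$ is therefore equivalent to the purely algebraic identity $K_{M,\lambda-\mu}=m_{\mu,M}^{\lambda}$.

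To establish this identity, I would decompose $M=\bigoplus_{\nu\in P_+}V_q(\nu)^{\oplus m_\nu}$. Since $M$ is of minuscule type, every $\nu$ with $m_\nu\neq 0$ is minuscule and, moreover, the hypothesis $\dim M_\mu\in\{0,1\}$ forces $m_\nu\in\{0,1\}$ (as two isomorphic minuscule summands would already produce a weight of multiplicity at least $2$). On the one hand, $K_{M,\lambda-\mu}=\sum_\nu m_\nu K_{\nu,\lambda-\mu}$ by additivity of characters; on the other hand, $m_{\mu,M}^{\lambda}=\sum_\nu m_\nu\, m_{\mu,\nu}^{\lambda}$ by additivity of tensor product decompositions. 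Item~1 of the Remark in Section \ref{subsec_minus} gives $K_{\nu,\lambda-\mu}=m_{\mu,\nu}^{\lambda}\in\{0,1\}$ for each minuscule $\nu$, and summing yields $K_{M,\lambda-\mu}=m_{\mu,M}^{\lambda}\in\{0,1\}$, where the membership in $\{0,1\}$ uses $\dim M_{\lambda-\mu}\le 1$ so that at most one term in the sum is nonzero.

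Once this identity is established, the proof concludes as in Proposition \ref{Th_psi}: the function $\psi$ is strictly positive on $P_+$, and substituting the identity back gives $\Pi_{\mathcal{H}}(\mu,\lambda)=\frac{\psi(\lambda)}{\psi(\mu)}\,\Pi_{\mathcal{W}}^{\overline{C}}(\mu,\lambda)$. Summing over $\lambda\in P_+$ and using $\sum_\lambda \Pi_{\mathcal{H}}(\mu,\lambda)=1$ (Theorem \ref{Th_main} extended to $M$) shows that $\sum_\lambda \Pi_{\mathcal{W}}^{\overline{C}}(\mu,\lambda)\,\psi(\lambda)=\psi(\mu)$, i.e.\ $\psi$ is harmonic for the substochastic kernel $\Pi_{\mathcal{W}}^{\overline{C}}$, and $\Pi_{\mathcal{H}}$ is its Doob $\psi$-transform.

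The only real obstacle is the identity $K_{M,\lambda-\mu}=m_{\mu,M}^{\lambda}$; the whole force of the ``minuscule type'' assumption is used precisely to secure it, since without the uniform bound $\dim M_\mu\le 1$ one could have contributions from two different minuscule orbits at the same weight, breaking the identity.
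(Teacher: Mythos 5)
Your proof is correct and follows essentially the same route as the paper's: reduce to the identity $K_{M,\lambda-\mu}=m_{\mu,M}^{\lambda}$, decompose $M$ into irreducibles, and apply termwise the identity $K_{\nu,\lambda-\mu}=m_{\mu,\nu}^{\lambda}$ valid for each minuscule summand $\nu$.

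One small conceptual slip in your closing remark, though it does not affect the validity of the argument: the uniform bound $\dim M_\mu\le 1$ is \emph{not} what prevents two different minuscule summands from contributing at the same weight. Distinct dominant weights $\nu_1\neq\nu_2$ have disjoint Weyl orbits, and a minuscule $V(\nu)$ has weight support exactly $W\cdot\nu$; hence two non-isomorphic minuscule components automatically have disjoint weight supports. The hypothesis $\dim M_\mu\le1$ therefore only rules out repeated isotypic components, i.e.\ it forces $m_\nu\in\{0,1\}$ (which you also derive correctly). Moreover, for the identity $K_{M,\lambda-\mu}=m_{\mu,M}^\lambda$ alone, even a repeated component would not break anything, since both sides get multiplied by $m_\nu$; what really carries the proof of this particular theorem is just that every irreducible component of $M$ is minuscule. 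The $\{0,1\}$ normalization is used later (e.g.\ in the analogue of Proposition~\ref{Prop_minuscule} underlying Theorem~\ref{Th_CoincideM}), not here.
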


\begin{proof}
Write $M=\oplus _{\nu \in P_{+}}V_{q}(\nu )$ the decomposition of $M$ in its
irreducible components.\ We have 
\begin{equation}
K_{M,\lambda -\mu }=\sum_{\nu }K_{\nu ,\lambda -\mu }\in \{0,1\}\text{ and }%
m_{\mu ,M}^{\lambda }=\sum_{\nu }m_{\mu ,\nu }^{\lambda }\in \{0,1\}.
\label{dec_Km}
\end{equation}
Similarly to the proof of Proposition \ref{Th_psi}, the matrix $\Pi _{\mathcal{H}}$ can
be realized as the $\psi $-transform of the substochastic matrix $\Pi _{%
\mathcal{W}}^{\overline{C}}$ since we have $K_{M,\lambda -\mu }=m_{\mu
,M}^{\lambda }$ for any $\lambda ,\mu \in P_{+}$.
\end{proof}

\bigskip

In the rest of this paragraph, we assume that $m=\mathbb{E}(X)$ belongs to $%
C $. As in Lemma \ref{lemma_ti}, this is equivalent to the assumption $%
0<t_{i}<1$ for any $i=1,\ldots ,n.\;$If we denote by $f_{\lambda /\mu
,M}^{\ell }$ the multiplicity of $V_{q}(\lambda )$ in $V_{q}(\mu )\otimes
M^{\otimes \ell },$ the decomposition of Proposition \ref{Prop_dec_skew} yet
holds. Moreover, Proposition \ref{Prop_minuscule} admits a straightforward
analogue which guarantees that each vertex $b_{\mu }\otimes b$ with $b\in
B(M)^{\otimes \ell }$ yielding a path in $\overline{C}$ is of highest
weight.\ If we consider $\lambda ^{(a)}$ a sequence of weights of the form $%
\lambda ^{(a)}=am+o(a)$, one has for $a$ large enough, $\lambda ^{(a)}\in
P_{+}$ and 
\begin{equation*}
f_{\lambda ^{(a)}/\mu ,M}^{\ell }=\sum_{\kappa \in P_{+}}f_{\kappa ,M}^{\ell
}K_{\mu ,\lambda ^{(a)}-\kappa }=\sum_{\gamma \in P}f_{\lambda ^{(a)}-\gamma
,M}^{\ell }K_{\mu ,\gamma }.
\end{equation*}
The proof of Theorem \ref{Th_coincide} leads to the

\begin{theorem}
\label{Th_CoincideM}Assume $M$ is a minuscule type representation and $m\in
C $. Then the transition matrix of $(\mathcal{W}_{\ell }^{\overline{C}%
})_{\ell \geq 0}$ is the same as
the transition matrix of the Markov chain $\mathcal{H}$. That is, the
corresponding transition probabilities are given by 
\begin{equation*}
\Pi (\mu ,\lambda )=\frac{s_{\lambda }(x)}{s_{M}(x)s_{\mu }(x)}\text{ for
any }\lambda ,\mu \in P_{+}\text{ such that }\lambda -\mu \text{ is a weight
of }B(M).
\end{equation*}
\end{theorem}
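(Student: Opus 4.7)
The plan is to mirror the proof of Theorem \ref{Th_coincide} step by step, since the theorem immediately preceding has already established that $\Pi_{\mathcal{H}}$ is the Doob $\psi$-transform of $\Pi_{\mathcal{W}}^{\overline{C}}$ for $\psi(\lambda)=x^{-\lambda}s_\lambda(x)$ (the minuscule type hypothesis gives $K_{M,\lambda-\mu}=m_{\mu,M}^\lambda\in\{0,1\}$, which is all that was used in the irreducible case). What remains is to identify $\psi$, up to a positive multiplicative constant, with the harmonic function $h_{\overline C}(\lambda)=\mathbb P_\lambda(\mathcal W_\ell\in\overline C,\ \forall\ell\geq1)$, and then apply Doob's theorem (Theorem \ref{Th_Doob}) exactly as in \S\ref{Sec_restric}.

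First I would set $\Pi=\Pi_{\mathcal W}^{\overline C}$, write the Green function $\Gamma(\mu,\lambda)=\sum_{\ell\ge0}\Pi^\ell(\mu,\lambda)$ and observe that, since $M$ is of minuscule type, every vertex $b_\mu\otimes b$ with $b\in B(M)^{\otimes\ell}$ whose successive weights stay in $\overline C$ is of highest weight (the proof is the same as for Proposition \ref{Prop_minuscule}, using only $\dim M_\nu\in\{0,1\}$). Combined with the decomposition $f^{\ell}_{\lambda/\mu,M}=\sum_{\gamma\in P}f^{\ell}_{\lambda-\gamma,M}K_{\mu,\gamma}$ — the exact analog of Proposition \ref{Prop_dec_skew}, which holds verbatim for $M$ of minuscule type — one obtains, for any sequence $\lambda^{(a)}=am+o(a)$ of dominant weights and all sufficiently large $a$,
\[
K(\mu,\lambda^{(a)})=\frac{\Gamma(\mu,\lambda^{(a)})}{\Gamma(0,\lambda^{(a)})}=x^{-\mu}\sum_{\gamma\text{ weight of }V(\mu)}K_{\mu,\gamma}\,x^\gamma\,\frac{\Gamma(0,\lambda^{(a)}-\gamma)}{\Gamma(0,\lambda^{(a)})}.
\]
Next, since the step distribution of $\mathcal W$ is supported on the (finite) set of weights of $M$, its increments are bounded, the drift $m$ lies in the open cone $C$, and the walk is adapted to the appropriate sublattice generated by the weights of $M$. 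The quotient renewal theorem (Theorem \ref{ren-q}) therefore applies and yields $\lim_a \Gamma(0,\mathcal W_a-\gamma)/\Gamma(0,\mathcal W_a)=1$ almost surely for each fixed $\gamma$. Combining this with $\mathcal W_a=am+o(a)$ a.s.\ (the strong law of large numbers) gives
\[
\lim_{a\to+\infty}K(\mu,\mathcal W_a)=x^{-\mu}\sum_{\gamma}K_{\mu,\gamma}\,x^{\gamma}=x^{-\mu}s_\mu(x)=\psi(\mu)\quad\text{a.s.}
\]
Doob's theorem then forces $\psi=c\,h_{\overline C}$ for some $c>0$, so the $h_{\overline C}$-transform and the $\psi$-transform of $\Pi_{\mathcal W}^{\overline C}$ coincide; using the minuscule type identity $m_{\mu,M}^\lambda=K_{M,\lambda-\mu}\in\{0,1\}$ to rewrite $\Pi_{\mathcal H}(\mu,\lambda)$ yields the announced formula $\Pi(\mu,\lambda)=s_\lambda(x)/(s_M(x)s_\mu(x))$ whenever $\lambda-\mu$ is a weight of $B(M)$.

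The main obstacle is verifying that every ingredient really transfers from the irreducible setting to the minuscule type setting without hidden use of connectedness of $B(\delta)$. Three checks are needed: (i) the tensor decomposition $f^{\ell}_{\lambda/\mu,M}=\sum_\gamma f^{\ell}_{\lambda-\gamma,M}K_{\mu,\gamma}$ and the highest-weight identification of paths in $\overline C$ (these rest only on $\dim M_\nu\leq 1$, which is the definition of minuscule type, together with Theorem \ref{TH_Ka}); (ii) the applicability of Theorem \ref{ren-q}, for which one needs $\langle S_\mu\rangle$ to generate a sublattice of rank equal to that of the ambient space after the standard translation argument of \S\ref{Sec_Renewal}, and one needs the lower bound of Lemma \ref{lem-marc}, which in turn depends only on Lemma \ref{psc} and therefore holds since $m\in C$; (iii) the equality $K_{M,\lambda-\mu}=m_{\mu,M}^\lambda$, which follows from \eqref{dec_Km} in the minuscule type case because distinct minuscule components of $M$ share no weight, so the sums collapse to a single term in $\{0,1\}$. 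Once these three points are in place, the computation proceeds exactly as in Theorem \ref{Th_coincide}.
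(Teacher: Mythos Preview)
Your proposal is correct and follows exactly the approach the paper takes: the paper simply states that ``the proof of Theorem \ref{Th_coincide} leads to'' Theorem \ref{Th_CoincideM}, after first noting that Proposition \ref{Prop_dec_skew} and Proposition \ref{Prop_minuscule} admit straightforward analogues for minuscule type $M$ and that the preceding theorem gives $\Pi_{\mathcal H}$ as the $\psi$-transform of $\Pi_{\mathcal W}^{\overline C}$. Your write-up is in fact more detailed than the paper's, which gives no formal proof beyond that one sentence; your three checks (i)--(iii) are precisely the ingredients the paper records in the paragraph preceding the theorem.
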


\begin{example}
Consider the minuscule type representation $M=V(\omega _{1})\oplus V(\omega
_{n-1})\oplus V(\omega _{n})$ in type $D_{n}$.\ The simple roots are the $%
\alpha _{i}=\varepsilon _{i}-\varepsilon _{i+1},i=1,\ldots ,n-1$ and $\alpha
_{n}=\varepsilon _{n-1}+\varepsilon _{n}.\;$We thus have $t_{i}=\frac{x_{i+1}%
}{x_{i}}$ for any $i=1,\ldots ,n-1$ and $t_{n}=\frac{1}{x_{n-1}x_{n}}.\;$%
This gives ${x_{i}=\frac{1}{t_{i}\cdots t_{n-2}\sqrt{t_{n-1}t_{n}}}}$ for
any $i=1,\ldots ,n-2,$ $x_{n-1}=\frac{1}{\sqrt{t_{n-1}t_{n}}}$ and $x_{n}=%
\sqrt{\frac{t_{n-1}}{t_{n}}}.$ We have $s_{M}(x)=s_{\omega
_{1}}(x)+s_{\omega _{n-1}}(x)+s_{\omega _{n}}(x)$.\ The weights $\mu $ of $M$
(an thus the possible transitions for $\mathcal{W}$ in $P$) are such that 
\begin{equation*}
\mu \in \{\pm \varepsilon _{i}\mid i=1,\ldots ,n\}\sqcup \{\pm \frac{1}{2}%
\varepsilon _{1}\pm \cdots \pm \frac{1}{2}\varepsilon _{n}\}.
\end{equation*}%
We thus have $2^{n}+2n$ possible transitions and the probability
corresponding to the transition $\mu $ is $p_{\mu }=\frac{x^{\mu }}{s_{M}(x)}%
.$
\end{example}

\section{Appendix: Miscellaneous proofs}

\subsection{Proof of Theorem \protect\ref{Th_Doob}}

The probability space on which the chain is defined is equipped with the
probability $\mathbb{P}_{x^\ast}$ defining the chain issued from the
particular point $x^\ast$.

Denote by $K_h$ the Martin kernel associated to $\Pi_h$ ; for any $x, y \in M
$ one gets 
\begin{equation*}
K_h(x, y)= {\frac{h(x^*)}{h(x)}}K(x, y). 
\end{equation*}
The Markov chain $(Y_\ell^h)$ is transient in $M$ and it converges almost
surely in the Martin topology to a random variable $Y_\infty^h$ taking
values in the Martin boundary $\mathcal{M}$ (see for instance Theorem 7.19
in \cite{W}). That means that for $\mathbb{P}_{x^\ast}$-almost all $\omega$,  the sequence of functions $%
\displaystyle 
\Bigl(K_h(\cdot , Y_\ell^h(\omega))\Bigr)_{\ell \geq 1} $ converges
pointwise to some (random) function $K_h(\cdot , Y_\infty^h(\omega))$ ;  in
this context, the hypothesis of the theorem may be written 
\begin{equation*}
\forall x \in M, \quad K_h(x, Y_\infty^h(\omega))={\frac{h(x^*)}{h(x)}}%
f(x)\quad \mbox{\rm a.s.} \ .
\end{equation*}
Since the family of functions $\Bigl\{ K_h(\cdot, \xi), \xi \in \mathcal{M}%
\Bigr\}$ separates the boundary points, the function  $K_h(\cdot ,
Y_\infty^h(\omega))$ is almost surely constant, i-e $Y_\infty^h(\omega)=\xi_%
\infty \ \mbox{\rm a.s.}$ for some fixed boundary point $\xi_\infty$.

It remains to prove that $K_h(x, \xi_\infty)$ does not depend on $x$. Note
that $\mathbb{P}_x\ll\mathbb{P}_{x^\star}$ for all $x\in M$, since $%
\Gamma(x^\ast,x)>0$. Setting $\nu_{x}(B):= \mathbb{P}_{x}[Y_\infty^h\in B]$
for any Borel set $B$ in $\mathcal{M}$ and any $x \in M$, one gets (see for
instance Theorem 7.42 in \cite{W}) 
\begin{equation}  \label{martin}
\nu_x(B)= \int_BK_h(x, \xi)\  \nu_{x^*}(d\xi).
\end{equation}
By the above, $\nu_{x^*}$ is the Dirac mass at the point $\xi_\infty$ ; the
equality (\ref{martin}) with $B= M$ gives  $1= K_h(x, \xi_\infty)$ for any $%
x \in M$ , i-e $\displaystyle f(x)={\frac{h(x)}{h(x^*)}}$. The proof is
complete.

\subsection{Proof of Lemma \protect\ref{lem-marc}}

We begin by a lemma coming from Garbit's thesis (\cite{Ga1}, \cite{Ga2}).
This lemma gives a uniform lower bound for the probability that a centered
random walk goes from the ball of radius $r+1$ to the ball of radius $r$
without leaving a cone.

Let $(R_{\ell})_{{\ell}\geq1}$ be a centered random walk in $\mathbb{R}^n$,
with finite second moment. Let $\mathcal{C}_0$ be an open convex cone in $%
\mathbb{R}^n$. We fix a unitary vector $\vec{u}$ in $\mathcal{C}_0$. For any $a>0$,
we denote by $\mathcal{C}_a$ the translated cone $\mathcal{C}_a:=a \vec{u}+%
\mathcal{C}_0, $ and for any $r>0$ we denote by $\mathcal{C}_a(r)$ the
truncated translated cone $\mathcal{C}_a(r):=\left\{x\in \mathcal{C}%
_a\mid\|x\|\leq r\right\}, $ which is non-empty if $a<r$.

\begin{lemma}[\protect\cite{Ga1}, \protect\cite{Ga2}]
\label{RG} There exist a positive integer $k$ and real positive numbers $r_0$, $a$ and $\rho$, with $a\leq r_0$ such that 
\begin{equation*}
\forall r\geq r_0, \ \forall x\in \mathcal{C}_a(r+1),\ \mathbb{P}_x\left[%
R_1\in \mathcal{C}_0,\ldots,R_k\in \mathcal{C}_0, R_k\in \mathcal{C}_a(r)%
\right]\geq\rho. 
\end{equation*}
\end{lemma}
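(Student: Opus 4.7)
\textit{Proof plan for Lemma \ref{RG}.} The plan is to produce, for each starting point $x\in\mathcal C_a(r+1)$, a \emph{target} $x'$ at bounded distance from $x$ that lies deep inside $\mathcal C_a(r)$, and then to show that the walk hits a small ball around $x'$ while the whole trajectory stays in $\mathcal C_0$, with positive probability uniform in $x$ and $r$.

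\textit{Geometric preparation.} Since $\vec u$ is an interior point of the open convex cone $\mathcal C_0$, there exists $\varepsilon_0>0$ with $B(\vec u,\varepsilon_0)\subset\mathcal C_0$. Because $\mathcal C_0$ is a convex cone closed under addition, for every $b>0$ and every $z\in\mathcal C_b$ one has $B(z,b\varepsilon_0)\subset\mathcal C_0$; applying the same observation to the cone $\mathcal C_a=a\vec u+\mathcal C_0$, we get that $z\in\mathcal C_{a+b}$ implies $B(z,b\varepsilon_0)\subset\mathcal C_a$. Fix $a$ so large that $a\varepsilon_0\geq 20$ and set $r_0:=4a$.

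\textit{Construction of $x'$.} For $x\in\mathcal C_a(r+1)$, write $x=a\vec u+y$ with $y\in\mathcal C_0$, and define $x'$ by cases: when $\|x\|\leq 2a$, set $x':=(2a+1)\vec u$; when $\|x\|>2a$, set $x':=(1-6/\|x\|)\,x+(6a/\|x\|+2)\vec u$. A direct computation gives $x'-(a+1)\vec u=\vec u+(1-6/\|x\|)y\in\mathcal C_0$ in the second case (and $a\vec u\in\mathcal C_0$ in the first), so $x'\in\mathcal C_{a+1}$ and $B(x',\varepsilon_0)\subset\mathcal C_a$. Moreover $\|x'\|\leq\|x\|-1\leq r$ in the second case (bounding $(\alpha\|x\|+\beta)^2$ with $\alpha=1-6/\|x\|,\beta=6a/\|x\|+2$), and $\|x'\|=2a+1\leq r$ in the first. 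Finally $\|x-x'\|\leq C_0$ for some constant $C_0=C_0(a)$. By convexity of $\mathcal C_a$, the segment $[x,x']$ lies in $\mathcal C_a$ at distance at least $a\varepsilon_0$ from $\partial\mathcal C_0$, so the tube $\mathcal T:=\{z:\mathrm{dist}(z,[x,x'])\leq a\varepsilon_0/2\}$ is contained in $\mathcal C_0$.

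\textit{Favorable event and uniform lower bound.} Consider $E_x:=\{R_j\in\mathcal T\text{ for all }j\leq k\}\cap\{R_k\in B(x',\varepsilon_0/2)\}$. On $E_x$, each $R_j\in\mathcal T\subset\mathcal C_0$ and $R_k\in B(x',\varepsilon_0/2)\subset\mathcal C_a$ with $\|R_k\|\leq\|x'\|+\varepsilon_0/2\leq r$, so $E_x$ is contained in the event of the lemma. Writing $S_j:=R_j-x=X_1+\cdots+X_j$, $E_x$ depends on $x$ only through the displacement $v:=x'-x$, which satisfies $\|v\|\leq C_0$; in these terms it becomes the event that $(S_j)$ stays in the tube of radius $a\varepsilon_0/2$ around the segment $[0,v]$ and $S_k\in B(v,\varepsilon_0/2)$. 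For a suitable fixed $k$, this probability $F(v)$ is strictly positive by the local limit theorem (in the lattice case) or a density argument (in the continuous case), together with a tube estimate via Donsker's functional CLT applied to the non-degenerate covariance $\Sigma=\mathrm{Cov}(X_1)$; continuity of $F$ in $v$ and compactness of $\{v:\|v\|\leq C_0\}$ then upgrade pointwise positivity to the uniform bound $\inf_v F(v)=:\rho>0$.

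\textit{Main obstacle.} The subtlest point is the construction of $x'$ in the intermediate regime where $\|x\|$ is only slightly larger than $a$: then $x$ may lie arbitrarily close to $\partial\mathcal C_a$, so one cannot simply translate $x$ by $-x/\|x\|$. The extra $\vec u$-component in the formula for $x'$ pushes the target a fixed amount into the interior of $\mathcal C_a$, but one must simultaneously reduce $\|x\|$ by at least $1$, which forces the case split at $\|x\|=2a$ and the choice of $a$ sufficiently large to leave slack on both constraints. A secondary technical point is the uniform lower bound on $F(v)$: each $F(v)$ is positive for separate reasons, but the key ingredient is compactness of $\{v:\|v\|\leq C_0\}$ combined with (lower semi-)continuity of $F$, which yields the essential constant $\rho>0$.
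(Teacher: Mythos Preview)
Your approach is genuinely different from the paper's, and as written it contains a real gap at the step where you assert that, for a suitable fixed $k$, the probability $F(v)$ is strictly positive for every $v$ with $\|v\|\le C_0$.

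\textbf{The gap.} The radius $\varepsilon_0/2$ of the terminal ball is dictated by the cone (through $B(\vec u,\varepsilon_0)\subset\mathcal C_0$), not by the law of the increments. In the lattice case---which is exactly the case needed later in the paper, where $R_\ell=S_\ell-\ell m$ with $(S_\ell)$ a lattice walk---the set of values attainable by $S_k=R_k-x$ lies in a fixed coset of a lattice whose mesh may well exceed $\varepsilon_0$. For such $v$ the ball $B(v,\varepsilon_0/2)$ can miss that coset entirely, forcing $F(v)=0$; invoking the local limit theorem does not help, since the LLT only estimates $\mathbb P(S_k=p)$ at reachable points $p$ and does not produce a reachable point inside an arbitrary small ball. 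The compactness upgrade then collapses, and even the continuity of $F$ you invoke is not available in the lattice case (at best $F$ is lower semicontinuous, and only if open balls and open tubes are used consistently). There is also a scale mismatch in your appeal to Donsker's theorem for the ``tube estimate'': Donsker controls the rescaled process $(S_{[kt]}/\sqrt k)_t$, whereas your tube and terminal ball have \emph{fixed} radii in the original coordinates; after rescaling they shrink like $1/\sqrt k$, so the functional CLT gives no lower bound on your event.

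\textbf{What the paper does instead.} The paper works at the diffusive scale from the outset. Setting
\[
p_k(y)=\mathbb P_0\Bigl[y+\tfrac{R_j}{\sqrt k}\in\mathcal C_0\ (1\le j\le k),\ y+\tfrac{R_k}{\sqrt k}\in\mathcal C_1(2)\Bigr],
\]
Donsker's invariance principle together with the Portemanteau theorem (using that $\mathcal C_1(2)$ is open) gives $\liminf_k p_k(y_k)>0$ whenever $y_k\to y\in\mathcal C_0$. A compactness argument over $\overline{\mathcal C_1(3)}$ then produces a single $k$ and $\rho>0$ with $p_k\ge\rho$ on that set; this fixes $a=\sqrt k$, $r_0=2\sqrt k$. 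The key point is that in unscaled coordinates the terminal target is $\mathcal C_{\sqrt k}(2\sqrt k)$, a \emph{macroscopic} region growing with $k$, so no small-ball or lattice issue ever arises. For $r>r_0$ no new probabilistic input is needed: given $z\in\mathcal C_a(r+1)\setminus\mathcal C_a(r_0+1)$, one picks $z'$ on the segment $[a\vec u,z]$ with $\|z'\|=r_0+1$ and checks, using $z-z'\in\mathcal C_0$ and convexity, that the (already controlled) event from $z'$ is contained in the desired event from $z$.

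Your geometric idea of pushing $x$ inward by a bounded displacement is salvageable if you enlarge the terminal ball to a radius exceeding the diameter of a fundamental domain of the lattice and correspondingly deepen $x'$ inside $\mathcal C_a$; but the clean route is the paper's rescaling, which converts the terminal constraint into a fixed open set and lets Donsker and Portemanteau do all the work.
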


\begin{proof}[Proof of Lemma \protect\ref{RG}]
We define, for $y\in \mathbb{R}^n$ and ${\ell}\in\mathbb{N}$,  
\begin{equation*}
p_{\ell}(y):=\mathbb{P}_0\left[y+\frac{R_1}{\sqrt {\ell}}\in \mathcal{C}%
_0,\ldots, y+\frac{R_{\ell}}{\sqrt {\ell}}\in \mathcal{C}_0, y+\frac{R_{\ell}%
}{\sqrt {\ell}}\in \mathcal{C}_{1}(2)\right].  
\end{equation*}
We have also  
\begin{equation*}
p_{\ell}(y)=\mathbb{P}_{\sqrt {\ell}y}\left[R_1\in \mathcal{C}_0,\ldots,
R_{\ell}\in \mathcal{C}_0, R_{\ell}\in \mathcal{C}_{\sqrt {\ell}}(2\sqrt {\ell}%
)\right].  
\end{equation*}
If a sequence $(y_{\ell})$ in $\mathbb{R}^n$ converges to a point $y$ in $%
\mathcal{C}_0$, then the ``Donsker line'' going through the points $%
\left(y_{\ell}+\frac{R_i}{\sqrt {\ell}}\right)_{i=1,\ldots,{\ell}}$
converges in law to a Brownian path (between times 0 and 1) starting at $y$;
the probability that this Brownian path stays in $\mathcal{C}_0$ and ends in 
$\mathcal{C}_1(2)$ is positive ; from Portemanteau theorem (c.f.\cite{Bil}),
we deduce that $\displaystyle \liminf p_{\ell}(y_{\ell})>0$. It follows
that, for any compact $K\subset \mathcal{C}_0$, there exists a positive
integer ${\ell}$ such that $\displaystyle \inf_{y\in K} p_{\ell}(y)>0$.

We choose a positive integer $k$ and a positive real $\rho$ such that, for
all $y\in\overline{\mathcal{C}_{1}(3)}$, $p_k(y)\geq\rho$. We note that if $%
\sqrt k y\in \mathcal{C}_{\sqrt k}(2\sqrt k +1) $ then $y\in\overline{%
\mathcal{C}_{1}(3)}$, and we conclude that  
\begin{equation*}
\forall z\in \mathcal{C}_{\sqrt k}(2\sqrt k +1),\ \mathbb{P}_z\left[R_1\in 
\mathcal{C}_0,\ldots, R_k\in \mathcal{C}_0, R_k\in \mathcal{C}_{\sqrt
k}(2\sqrt k)\right]\geq\rho.  
\end{equation*}
We define $r_0:=2\sqrt k$ and $a:=\sqrt k$. The four parameters of the lemma
are now fixed, and the result is proved for $r=r_0$.

We consider now $r>r_0$ and $z\in \mathcal{C}_a(r+1)$ and we distinguish two
cases.
\begin{itemize}
\item 
If $z\in \mathcal{C}_a(r_0+1)$ then $\mathbb{P}_z\left[R_1\in \mathcal{C}
_0,\ldots, R_k\in \mathcal{C}_0, R_k\in \mathcal{C}_a(r)\right]\geq\rho$,%
${}$ because $\mathcal{C}%
_a(r_0)\subset \mathcal{C}_a(r)$ and $\mathbb{P}_z\left[R_1\in \mathcal{C}_0,\ldots, R_k\in 
\mathcal{C}_0, R_k\in \mathcal{C}_a(r_0)\right]\geq\rho$.
\item 
If $z\notin \mathcal{C}_a(r_0+1)$ we remark that $\|a\vec{u}\|<r_0+1<\|z\|$ and we
denote by $z^{\prime }$ the point on the segment $[a\vec{u},z]$ such that $%
\|z^{\prime }\|=r_0+1$. We have $z^{\prime }\in \mathcal{C}_a(r_0+1)$ and we
verify, by inclusion of events, that 
\begin{equation*}
\mathbb{P}_{z^{\prime }}\left[R_1\in \mathcal{C}_0,\ldots, R_k\in \mathcal{C}%
_0, R_k\in \mathcal{C}_a(r_0)\right]\leq  \mathbb{P}_z\left[R_1\in \mathcal{C%
}_0,\ldots, R_k\in \mathcal{C}_0, R_k\in \mathcal{C}_a(r)\right].  
\end{equation*}
We conclude that this last term is $\geq \rho$.
\end{itemize}
The proof of the lemma
is complete.
\end{proof}

\bigskip

\begin{proof}[Proof of Lemma \protect\ref{lem-marc}]
We come back now to the study of a sequence of bounded independent
identically distributed random variables $(X_\ell)$ taken values in a
discrete subgroup $G$ of $\mathbb{R}^n$. We denote by $m$ there common mean
and by $(S_\ell)$ the associated random walk. Moreover we consider a cone $%
\mathcal{C}$ and an open convex subcone $\mathcal{C}_0$ satisfying the
assumptions stated in Section \ref{stayC}.

Let $\alpha\in]1/2,2/3[$. We suppose that the sequence $({\ell}%
^{-\alpha}\|g_{\ell}-{\ell}m\|)$ is bounded, and we want to prove that 
\begin{equation*}
\liminf\left(\mathbb{P}\left[S_1\in\mathcal{C},\ldots,S_{\ell}\in\mathcal{C}%
,S_{\ell}=g_{\ell}\right]\right)^{{\ell}^{-\alpha}}>0. 
\end{equation*}
It is sufficient to prove this result with $\mathcal{C}$ replaced by $%
\mathcal{C}_0$, so we suppose in the sequel that $\mathcal{C}$ satisfies the
assumptions imposed to $\mathcal{C}_0$. By Lemma \ref{psc}, we know that 
\begin{equation*}
p:=\mathbb{P}\left[ \forall {\ell} \geq 1, \ S_{\ell} \in \mathcal{C}\right]
>0. 
\end{equation*}

Since $\alpha>1/2$, the sequence ${\ell}^{-\alpha}(S_{\ell}-{\ell}m)$ goes
to zero in quadratic mean, hence in probability. For $r_0$ large enough, for
all ${\ell}$, we have 
\begin{equation}  \label{fuite-control}
\mathbb{P}\left[S_1\in \mathcal{C},\ldots,S_{\ell}\in \mathcal{C},\|S_{\ell}-%
{\ell}m\|\leq r_0 + [{\ell}^\alpha]\right]\geq p/2.
\end{equation}
From Lemma \ref{RG} applied to the random walk $R_{\ell}=S_{\ell}-{\ell}m$,
we know that there exist an integer $k>0$, a positive real number $r_0$ and
a positive real number $\rho$ such that, for all $r\geq r_0$ and all $x\in 
\mathcal{C}$ with $\|x\|\leq r+1$, 
\begin{equation*}
\mathbb{P}_x\left[S_1-m\in \mathcal{C},\ldots,S_k-km\in \mathcal{C},
\|S_k-km\|\leq r\right]\geq\rho. 
\end{equation*}
All the more, for all $r>r_0$ and all $x\in \mathcal{C}$ with $\|x\|\leq r+1$%
, 
\begin{equation}  \label{retour-arr}
\mathbb{P}_x\left[S_1\in \mathcal{C},\ldots,S_k\in \mathcal{C},
\|S_k-km\|\leq r\right]\geq\rho.
\end{equation}
We fix $k$, $r_0$ and $\rho$.

Looking to a path $(S_i)_{1\leq i\leq {\ell}+k[{\ell}^\alpha]}$ staying in
the cone and such that $\|S_{\ell}-{\ell}m\|\leq r_0 + [{\ell}^\alpha]$ and $%
\|S_{{\ell}+kj}-({\ell}+kj)m\|\leq r_0 + [{\ell}^\alpha]-j$, $1\leq j\leq[{%
\ell}^\alpha]$, we deduce from (\ref{fuite-control}) and (\ref{retour-arr})
that, for all ${\ell}$, 
\begin{equation*}
\mathbb{P}\left[S_1\in \mathcal{C},\ldots,S_{{\ell}+k[{\ell}^\alpha]}\in 
\mathcal{C}, \left\|S_{{\ell}+k[{\ell}^\alpha]}-({\ell}+k[{\ell}%
^\alpha])m\right\|\leq r_0\right]\geq\frac{p}2\rho^{{\ell}^\alpha}. 
\end{equation*}

The distance between the ball $B\left({\ell}m, r_0\right)$ and the
complementary of the cone $\mathcal{C}$ increases linearly with ${\ell}$. We
can fix an integer $d$ such that, for all ${\ell}$, the distance between the
ball $B\left(d{\ell}m, r_0\right)$ and the complementary of the cone is
greater than $2{\ell}$ times the $L^\infty$ norm of the random variables $%
X_{\ell}$.

Let us summarize what we know :

\begin{itemize}
\item For all ${\ell}$, 
\begin{equation}  \label{un}
\mathbb{P}\left[S_1\in \mathcal{C},\ldots,S_{d{\ell}+k[(d{\ell})^\alpha]}\in 
\mathcal{C}, \left\|S_{d{\ell}+k[(d{\ell})^\alpha]}-(d{\ell}+k[(d{\ell}%
)^\alpha])m\right\|\leq r_0\right]\geq\frac{p}2\rho^{(d{\ell})^\alpha}.
\end{equation}

\item Starting from a point in the ball $B\left((d{\ell}+k[(d{\ell}%
)^\alpha])m, r_0\right)$, the random walk cannot exit the cone $\mathcal{C}$
in less than $2{\ell}$ steps.
\end{itemize}

We want now to apply Theorem \ref{TLL-R}; for all integers $\ell,i>0$ with $\ell\leq i\leq 2\ell$ we define $g^{\prime }_{\ell,i}:=g_{d%
{\ell}+k[(d{\ell})^\alpha]+i}-x-(d{\ell}+k[(d{\ell})^\alpha])m$ and we
notice that, if $\|x\|\leq r_0$, then 
\begin{equation*}
\|g^{\prime }_{\ell,i}-im\|=O\left((d{\ell}+k[(d{\ell})^\alpha]+i
)^\alpha\right)=O\left(i^\alpha\right). 
\end{equation*}
Since $\alpha<2/3$, Theorem \ref{TLL-R} gives us the existence of positive
constants $c$ and $C$ such that, for all $\ell$ large enough,
\begin{equation*}
\inf_{x : \|x\|\leq r_0 \text{ and } g^{\prime }_{\ell,i}\in G} \mathbb{P}%
_x[S_i=g^{\prime }_{\ell,i}]\geq C i^{-n/2}\exp\left(-\frac{c}{i
}\|g^{\prime }_{\ell,i}-im\|^2\right). 
\end{equation*}
Modifying $c$ and $C$ if necessary, we obtain 
\begin{equation}  \label{de}
\inf_{x : \|x\|\leq r_0 \text{ and } g^{\prime }_{\ell,i}\in G} \mathbb{P}
_x[S_i=g^{\prime }_{\ell,i}]\geq C {\ell}^{-n/2}\exp\left(-c {\ell}%
^{2\alpha-1}\right).
\end{equation}

We apply this estimate to a path of the random walk starting at time $d{\ell}%
+k[(d{\ell})^\alpha]$ from the point $x+(d{\ell}+k[(d{\ell})^\alpha])m$.
Using (\ref{un}) et (\ref{de}) we obtain 
\begin{multline*}
\mathbb{P}{\Large [S_1\in \mathcal{C},\ldots,S_{d{\ell}+k[(d{\ell}%
)^\alpha]}\in \mathcal{C}, \left\|S_{d{\ell}+k[(d{\ell})^\alpha]}-(d{\ell}%
+k[(d{\ell})^\alpha])m\right\|\leq r_0,} \\
S_{d{\ell}+k[(d{\ell})^\alpha]+i}=g_{d{\ell}+k[(d{\ell})^\alpha]+i}%
{\Large ]\geq \frac{p}2\rho^{(d{\ell})^\alpha}\times C {\ell}%
^{-n/2}\exp\left(-c {\ell}^{2\alpha-1}\right).}
\end{multline*}

Moreover, we remember that, by the choice of $d$, if $\left\|S_{d{\ell}+k[(d{%
\ell})^\alpha]}-(d{\ell}+k[(d{\ell})^\alpha])m\right\|\leq r_0$ then $S_{d{%
\ell}+k[(d{\ell})^\alpha]+j}\in \mathcal{C}$ for all $j$ between 1 and ${2\ell
}$. We conclude that 
\begin{equation*}
\mathbb{P}\left[S_1\in \mathcal{C},\ldots,S_{d{\ell}+k[(d{\ell})^\alpha]+{
i}}\in \mathcal{C}, S_{d{\ell}+k[(d{\ell})^\alpha]+i}=g_{d{\ell}+k[(d%
{\ell})^\alpha]+i}\right]\geq \frac{p}2\rho^{(d{\ell})^\alpha} C {\ell}%
^{-n/2}\exp\left(-c {\ell}^{2\alpha-1}\right). 
\end{equation*}
Since $\alpha<1$, the $(d{\ell}+k[(d{\ell})^\alpha]+i)^{-\alpha}$-th
power of the right hand side has a positive limit when ${\ell}$ goes to
infinity. This gives the announced conclusion, modulo the final easy following claim:
all large enough integer can be written under the form $d{\ell}+k[(d{\ell})^\alpha]+i$ with $0<\ell\leq i\leq 2\ell$
\end{proof}

\bigskip

\end{document}